\newtheoremstyle{mythm}{3pt}{3pt}{}{}{\bfseries}{}{5mm}{}
\def\iint{\displaystyle\!iint}
\def\int{\displaystyle\!int}
\def\lim{\displaystyle\!lim}
\def\sum{\displaystyle\!sum}
\def\sup{\displaystyle\!sup}
\def\inf{\displaystyle\!inf}
\def\cap{\displaystyle\!cap}
\def\max{\displaystyle\!max}
\newtheorem{thm}{Theorem}[section]
\newtheorem{lem}[thm]{Lemma}
\newtheorem{exa}{Example}[section]
\newtheorem{defn}[{thm}]{Definition}
\newtheorem{prop}[thm]{Proposition}
\newtheorem{corr}[thm]{Corollary}
\newtheorem{con}[thm]{Conjecture}
\newtheorem*{ack}{Acknowledgment}
\newtheorem{rem}[thm]{Remark}
\numberwithin{equation}{section}
\begin{document}

\title{The  Gaps  of  Consecutive Eigenvalues \\ of Laplacian on  Riemannian Manifolds}
\author{Lingzhong Zeng
\\ \small College of Mathematics and Informational Science,
Jiangxi Normal University,
\\
\small Nanchang 330022, China, E-Mail: lingzhongzeng@yeah.net
}
\date{}

\maketitle

\begin{abstract}\noindent In this paper, we investigate the Dirichlet problem of Laplacian on complete Riemannian manifolds.  By constructing new trial functions,
we obtain a sharp upper bound of the gap of the consecutive eigenvalues  in the sense of the order, which  affirmatively answers to a
conjecture proposed by Chen-Zheng-Yang. In addition, we also exploit the closed eigenvalue problem of
Laplacian and obtain a similar optimal upper bound. As some important examples,  we investigate the eigenvalues of the eigenvalue problem of the Laplacian
 on  the unit sphere and cylinder,
compact homogeneous Riemannian manifolds without boundary, connected bounded domain  and   compact complex hypersurface without boundary in the standard
complex projective space $\mathbb{C}P^{n}(4)$ with holomorphic sectional curvature $4$,
and some intrinsic estimates for the eigenvalue gap is obtained. As the author know, for the Dirichlet problem, the gap  $\lambda_{k+1}-\lambda_{k}$ is
bounded by the first $k$-th eigenvalues in the previous literatures. However, by a large number of numerical calculations,
the author surprisingly find that the gap of the consecutive eigenvalues of the Dirichlet problem on the $n$-dimensional Euclidean space $R^{n}$
can be bounded only by the first two eigenvalues. Therefore, we venture to conjecture that all of the eigenvalues  satisfy:
$\lambda_{k+1}-\lambda_{k}\leq \mathfrak{S}_{i}(\Omega)(\lambda_{2}-\lambda_{1})k^{1/n}$, where $\mathfrak{S}_{i}(\Omega),i=1,2$ denote the first shape coefficient and the second shape coefficient.  In particular, if we consider the second shape coefficient, then there is a
close connection between this universal inequality and the famous Panye-P\'{o}lya-Weinberger conjecture in general form. By calculating  some important examples, we adduce some good
evidence on the correctness of this conjecture.
\vskip3mm
\noindent {\it\bfseries Keywords}:
sharp gap; Laplacian;
consecutive eigenvalue; Riemannian manifold.

\vskip3mm
\noindent 2000 MSC 35P15, 53C40.
\end{abstract}

\section{Introduction}

Let
$\Omega$ be a bounded domain with piecewise smooth
boundary $\partial\Omega$ in an $n$-dimensional complete Riemannian manifold $M^{n}$ and $\Delta$ be the Laplacian on $M^{n}$. We consider the following Dirichlet problem:

\begin{equation}
\left\{ \begin{aligned} \label{Eigenvalue-Problem}\Delta u=-\lambda u,\ \ &{\rm
in}\ \ \ \ \Omega,
         \\
u=0,\ \ &{\rm on}\ \ \partial\Omega,
                          \end{aligned} \right.
                          \end{equation} which is also called a membrane problem \eqref{Eigenvalue-Problem}. This eigenvalue problem has nontrivial
solutions $u$ only for a discrete set of eigenvalues
$\{
\lambda_{k}
\}
_{k\in\mathbb{Z}^{+}}.$ The corresponding eigenfunctions
$\{u_{k}\}_{k\in\mathbb{Z}^{+}}$ provide an orthonormal  basis of $L^{2}(\Omega)$. We may enumerate
the eigenvalues in increasing order
as follows:

\begin{equation*}
0<\lambda_{1}<\lambda_{2}\leq\lambda_{3}\leq\cdots\leq\lambda_{k}\leq\cdots\rightarrow+\infty,
\end{equation*}
where each eigenvalue is repeated according to its multiplicity.
When $M^{n}$ is an $n$-dimensional Euclidean space $\mathbb{R}^{n}$, Payne,
P\'{o}lya and Weinberger \cite{PPW1} and \cite{PPW2} exploited the eigenvalues of the Dirichlet problem \eqref{Eigenvalue-Problem} and obtained the following universal inequality:

\begin{equation}\label{ppw-ineq}\lambda_{k+1}-\lambda_{k}\leq\frac{4}{nk}\sum^{k}_{i=1}\lambda_{i}.\end{equation}
In fact, the universal inequality of Payne, P\'{o}lya and Weinberger is extended  by many mathematicians in some differential backgrounds. However,
there are two main contributions due to Hile and Protter \cite{HP} and
Yang \cite{Y}. In 1980, Hile and Protter \cite{HP} obtained the following universal inequality:

\begin{equation}\label{hp-ineq}\sum^{k}_{i=1}\frac{\lambda_{i}}{\lambda_{k+1}-\lambda_{i}}\geq\frac{nk}{4},\end{equation}
which is sharper than \eqref{ppw-ineq}. Furthermore, Yang \cite{Y} (cf. \cite{CY1}) obtained a very sharp universal inequality of eigenvalues:

\begin{equation}\label{y1-ineq}\sum^{k}_{i=1}(\lambda_{k+1}-\lambda_{i})^{2}\leq\frac{4}{n}\sum^{k}_{i=1}(\lambda_{k+1}-\lambda_{i})\lambda_{i}.\end{equation}
From the inequality \eqref{y1-ineq}, one can yield

\begin{equation}\label{y2-ineq}\lambda_{k+1}\leq\frac{1}{k}\left(1+\frac{4}{n}\right)\sum^{k}_{i=1}\lambda_{i}.\end{equation}
The inequalities \eqref{y1-ineq} and \eqref{y2-ineq} are called by Ashbaugh Yang's first inequality and second inequality,
respectively (cf. \cite{Ash8}, \cite{Ash9}). Also we note that Ashbaugh and Benguria gave an optimal
estimate for $k = 1$, see \cite{ Ash2,Ash3,Ash4}. By utilizing the Chebyshev's inequality, it is not difficult
to prove the following inequalities (cf. \cite{Ash9}):

$$\eqref{y1-ineq}\Rightarrow \eqref{y2-ineq} \Rightarrow \eqref{hp-ineq} \Rightarrow \eqref{ppw-ineq}.$$
In 2007, Cheng and Yang established a celebrated recursion formula \cite{CY3}.  By making use of this recursion formula, they obtained a sharp upper bound
of the $(k+1)$-th eigenvalue, this is, they proved the following:

\begin{equation}
\begin{aligned}
\label{Cheng-Yang-ineq}\lambda_{k+1}
\leq C_{0}(n,k)k^{\frac{2}{n}}\lambda_{1},
\end{aligned}
\end{equation}
where $C_{0}(n,k)\leq1+\frac{4}{n}$ is a constant depending on $n$ and $k$ (see Cheng and Yang's paper \cite{CY3}). From the Weyl's
asymptotic formula(cf. \cite{Wey,Weyl1,Bera}):
\begin{equation}
\begin{aligned}
\label{Weyl-asmptotic-formula}\lambda_{k}\sim\frac{4\pi^{2}}{(\omega_{n}V(\Omega))^{\frac{2}{n}}}k^{\frac{2}{n}}\ \ as \ \ k\rightarrow+\infty,\end{aligned}
\end{equation}
where $\omega_{n}$ and $V(\Omega)$ denote the volumes of the $n$-dimensional unit ball $\mathbb{B}^{n}(1)\subset\mathbb{R}^{n}$ and the bounded domain $\Omega$, we know that the upper bound \eqref{Cheng-Yang-ineq} of Cheng and Yang is
best possible in the meaning of the order on $k$. If $M^{n}$ is a complete Riemannian manifold isometrically
immersed in a Euclidean space  $\mathbb{R}^{n+p}$, Chen-Cheng \cite{CC} derived an extrinsic estimates as follows:

\begin{equation}\label{cc1-ineq}\sum^{k}_{i=1}(\lambda_{k+1}-\lambda_{i})^{2}\leq\frac{4}{n}\sum^{k}_{i=1}(\lambda_{k+1}-\lambda_{i})\left(\lambda_{i}+\frac{n^{2}\|H\|^{2}}{4}\right).\end{equation}
$H$ is the mean curvature vector field of $M^{n}$ with $\|H\|^{2}=\sup_{\Omega}|H|^{2}.$ Further, by using the Cheng-Yang's recursive formula in \cite{CY1}, they also obtained an upper bound estimates, this is,

\begin{equation}\label{cc2-ineq}\left(\lambda_{k+1}+\frac{n^{2}\|H\|^{2}}{4}\right)\leq C_{0}(n)k^{\frac{2}{n}}\left(\lambda_{1}+\frac{n^{2}\|H\|^{2}}{4}\right).\end{equation}
Suppose that $\Omega$ is a bounded connected domain in a unit sphere $\mathbb{S}^{n}(1)$,  Cheng and Yang \cite{CY1} obtained an upper bound estimate for the gap of the consecutive eigenvalues of the eigenvalue problem \eqref{Eigenvalue-Problem}:

\begin{equation*}\begin{aligned}\lambda_{k+1}-\lambda_{k}\leq2\left[\left(
\frac{2}{n}\frac{1}{k}\sum^{k}_{i=1}\lambda_{i}+\frac{n}{2}\right)^{2}-\left(1+\frac{4}{n}\right)\frac{1}{k}\sum^{k}_{j=1}\left(\lambda_{j}
-\frac{1}{k}\sum^{k}_{i=1}\lambda_{i}\right)^{2}\right]^{\frac{1}{2}}.\end{aligned}
\end{equation*} In \cite{CY2}, Cheng and Yang
investigated the Dirichlet  problem \eqref{Eigenvalue-Problem} of
the Laplacian on a connected bounded domain of the standard complex projective space
$\mathbb{C}P^{n}(4)$ with holomorphic sectional curvature $4$. They proved the following universal inequality
\begin{equation}\label{cy-universal-1}\begin{aligned}\sum^{k}_{i=1}\left(\lambda_{k+1}-\lambda_{i}\right)^{2}\leq\frac{2}{n}
\sum^{k}_{i=1}\left(\lambda_{k+1}-\lambda_{i}\right)\left(\lambda_{i}+2n(n+1)\right).\end{aligned}
\end{equation}
By \eqref{cy-universal-1}, Cheng and Yang gave an explicit
estimate of the $k+1$-th eigenvalue of Laplacian on such objects by its first $k$
eigenvalues and proved the following inequality (cf. \cite{CY2}):

\begin{equation}\label{cylk1-lk}\begin{aligned}\lambda_{k+1}-\lambda_{k}\leq
2\left\{\left[\frac{1}{n}\frac{1}{k}\sum^{k}_{i=1}\lambda_{i}+2(n+1)\right]^{2}
-\left(1+\frac{2}{n}\right)\frac{1}{k}\sum^{k}_{j=1}\left(\lambda_{j}-\frac{1}{k}\sum^{k}_{i=1}\lambda_{i}\right)^{2}\right\}^{\frac{1}{2}}.\end{aligned}
\end{equation}Let $\Omega$ be a bounded domain on an $n$-dimensional Euclidean space $\mathbb{R}^{n}$, Chen, Zheng and Yang \cite{CZY} recently established a gap of consecutive
eigenvalues of the eigenvalue problem \eqref{Eigenvalue-Problem},

\begin{equation}\label{czy-1}\begin{aligned}\lambda_{k+1}-\lambda_{k}\leq
C_{n,\Omega}k^{\frac{1}{n}},\end{aligned}
\end{equation}
where $$C_{n,\Omega}=4\lambda_{1}\sqrt{\frac{C_{0}(n)}{n}},$$ and the constant $C_{0}(n)$ is the same as the one in \eqref{Cheng-Yang-ineq}. By a direct calculation and using the Weyl's asymptotic formula, we know the order of the upper
bound of the gap of the consecutive eigenvalues of $\mathbb{S}^{n}$ with
standard metric is $k^{\frac{1}{n}}$. Therefore, for general Dirichlet
problem of the Laplacian on Riemannian manifolds, Chen, Zheng and Yang presented a conjecture as follows:

\begin{con}\label{con1}(cf. {\rm \cite{CZY}})
Let $(M^{n},g)$ be an $n$-dimensional Riemannian manifold, and $\lambda_{i}$ be the $i$-th $(i=1,2,\cdots,k)$
eigenvalue of the eigenvalue problem \eqref{Eigenvalue-Problem}. Then we have

\begin{equation}\label{czy-conj}\begin{aligned}\lambda_{k+1}-\lambda_{k}\leq
C_{n,\Omega}k^{\frac{1}{n}},\end{aligned}
\end{equation}
where $C_{n,\Omega}$ is a constant dependent on $\Omega$ itself and the dimension $n$.
\end{con}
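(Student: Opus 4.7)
\medskip

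\noindent\textbf{Proof proposal.} My strategy is to transplant the Chen-Zheng-Yang argument from the Euclidean setting to an arbitrary Riemannian manifold by securing three ingredients: a Yang-type universal inequality for the eigenvalues of $-\Delta$ on $\Omega\subset M$, a Cheng-Yang recursion yielding $\lambda_{k+1}\le C_{0}(n)k^{2/n}\lambda_{1}$ up to a curvature shift, and an algebraic extraction that converts these into a gap bound of order $k^{1/n}$.

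For the first ingredient I would construct trial functions adapted to $M^{n}$. Since globally defined Euclidean coordinate functions are unavailable, I would proceed either by Nash-embedding a relatively compact neighborhood of $\bar\Omega$ isometrically into some $\mathbb{R}^{n+p}$ and using the ambient coordinates restricted to $M$ (as in Chen-Cheng), or intrinsically by patching coordinate-like functions with a partition of unity over a finite cover of $\bar\Omega$ -- the ``new trial functions'' announced in the abstract. Projecting these trial functions orthogonal to $u_{1},\ldots,u_{k}$ and substituting them into the Rayleigh characterization of $\lambda_{k+1}$, the commutator computation of Yang--Chen--Cheng then yields a universal inequality
\begin{equation*}
\sum_{i=1}^{k}(\lambda_{k+1}-\lambda_{i})^{2} \le \frac{4}{n}\sum_{i=1}^{k}(\lambda_{k+1}-\lambda_{i})(\lambda_{i}+K),
\end{equation*}
where $K=K(\Omega,n)\ge 0$ encodes the relevant geometry of $\Omega$ (the mean-curvature term $n^{2}\|H\|^{2}/4$ in the extrinsic route, or an intrinsic curvature functional in the other).

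With $\mu_{i}:=\lambda_{i}+K$ the display above is exactly Yang's inequality \eqref{y1-ineq} for $\{\mu_{i}\}$, so the Cheng-Yang recursion formula applies verbatim and delivers $\mu_{k+1}\le C_{0}(n)k^{2/n}\mu_{1}$, the shifted analogue of \eqref{Cheng-Yang-ineq}. I would then run the Chen-Zheng-Yang gap extraction on the $\mu_{i}$: use the identity $\sum_{i}(\mu_{k+1}-\mu_{i})\mu_{i}=\mu_{k+1}\sum_{i}(\mu_{k+1}-\mu_{i})-\sum_{i}(\mu_{k+1}-\mu_{i})^{2}$ to rewrite the Yang inequality, apply the elementary bound $k(\mu_{k+1}-\mu_{k})^{2}\le\sum_{i}(\mu_{k+1}-\mu_{i})^{2}$ together with Cauchy-Schwarz, and finally substitute the shifted Cheng-Yang bound for $\mu_{k+1}$. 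Exactly as in the Euclidean derivation of \eqref{czy-1}, the outcome is
\begin{equation*}
\lambda_{k+1}-\lambda_{k}=\mu_{k+1}-\mu_{k} \le 4\mu_{1}\sqrt{C_{0}(n)/n}\;k^{1/n},
\end{equation*}
which is precisely \eqref{czy-conj} with $C_{n,\Omega}:=4(\lambda_{1}+K)\sqrt{C_{0}(n)/n}$, a constant depending only on $n$ and on the geometry of $\Omega$.

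The main obstacle is the first step: designing trial functions on a general Riemannian manifold that produce the sharp coefficient $4/n$ \emph{and} a bounded, geometrically meaningful correction $K$. In the Euclidean and Euclidean-immersed cases this is due to Payne-P\'olya-Weinberger, Yang, and Chen-Cheng, but intrinsically it demands genuinely new trial functions together with careful tracking of the error terms coming from the non-Euclidean geometry; this is the technical heart of the argument. Once this universal inequality is secured, the Cheng-Yang recursion and the Chen-Zheng-Yang gap extraction apply formally to the shifted sequence $\{\mu_{i}\}$, so the remaining two steps reduce to short algebraic manipulations.
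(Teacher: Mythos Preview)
Your third step --- the ``Chen--Zheng--Yang gap extraction'' --- is not what Chen--Zheng--Yang actually do, and the algebraic manipulation you describe does not yield the order $k^{1/n}$. From Yang's inequality for the shifted $\mu_i$ together with $k(\mu_{k+1}-\mu_k)^2\le\sum_i(\mu_{k+1}-\mu_i)^2$ and Cauchy--Schwarz, the best you obtain is $(\mu_{k+1}-\mu_k)^2\le C\mu_{k+1}^2$, hence $\mu_{k+1}-\mu_k\le C\mu_{k+1}\le C'k^{2/n}\mu_1$; this is the well-known Cheng--Yang consequence of Yang's inequality and is one power of $k^{1/n}$ too large. No rearrangement of Yang's inequality alone produces a bound of the shape $(\mu_{k+1}-\mu_k)^2\le C\mu_{k+1}$, which is what one needs in order that the Cheng--Yang bound $\mu_{k+1}\le C_0 k^{2/n}\mu_1$ yield the correct order.

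The actual Chen--Zheng--Yang mechanism (Lemma~\ref{lem2.1} here) is a trial-function inequality that couples \emph{two} consecutive unknown eigenvalues $\lambda_{k+1}$ and $\lambda_{k+2}$ simultaneously; after an algebraic step it gives, in the Euclidean case with $|\nabla h|=1$, directly $(\lambda_{k+2}-\lambda_{k+1})^2\le 4\lambda_{k+2}\cdot(\text{bounded})$, from which $k^{1/n}$ follows. The paper's contribution for immersed manifolds is precisely to replace the Euclidean hypothesis $|\nabla h|=1$ by the weaker condition $\|\nabla h_j\,u_i\|^2\ge\big\||\nabla h_j|^2 u_i\big\|$ (Lemma~\ref{lem2.3}), achieved by passing to \emph{scaled} coordinate functions $h_j=\alpha_j x^j$ of the ambient Euclidean space; this is the ``new trial functions'' step you correctly flag as the heart of the matter, but your outline does not isolate the two-eigenvalue lemma that makes the gap extraction work. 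Your first two ingredients (the shifted Yang inequality and the Cheng--Yang recursion) are indeed used, but only to supply the input $\lambda_{k+2}+c\le C_0(n)(k+1)^{2/n}(\lambda_1+c)$ at the final stage.
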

Furthermore, as an excellent example to support conjecture \ref{con1}, Chen, Zheng and Yang also investigated the eigenvalues of Laplacian on hyperbolic space.
They proved the eigenvalue inequality \eqref{czy-conj} also holds for some  hyperbolic space with some curvature conditions.

In addition, the famous fundamental gap conjecture states that, for the Dirichlet eigenvalue
problem of the Schr\"{o}dinger operator,

\begin{equation}
\left\{ \begin{aligned}\label{gap-conj}\Delta u-V(x) u=-\lambda u,\ \ &{\rm
in}\ \ \ \ \Omega,
         \\
u=0,\ \ &{\rm on}\ \ \partial\Omega,
                          \end{aligned} \right.
                          \end{equation}where $V(x)$ is a convex potential, Then the eigenvalues of \eqref{gap-conj} satisfy $\lambda_{2}-\lambda_{1}\geq3\pi^{2}/D^{2}.$
Many mathematicians have contributed much to this conjecture  (cf. {\rm \cite{AC,B,SWYY,YY,YZ}} and the references therein), and it was finally solved by B. Andrews and J. Clutterbuck in {\rm  \cite{AC}}.

In this paper, we exploit the Dirichlet problem \eqref{Eigenvalue-Problem} of the Laplacian on the complete Riemannian manifolds.
Suppose that $M^{n}$ is an $n$-dimensional complete Riemannian manifolds isometrically
immersed in a Euclidean space. For this case, we answer to the conjecture \ref{con1}. This is to say, we prove the
following:

\begin{thm}
\label{thm1.1}Let $(M^{n},g)$ be  an $n$-dimensional complete Riemannian manifolds isometrically
immersed in a Euclidean space $\mathbb{R}^{n+p}$, and $\lambda_{i}$ be the $i$-th $(i=1,2,\cdots,k)$
eigenvalue of the Dirichlet problem \eqref{Eigenvalue-Problem}. Then we have

\begin{equation}\label{z1}\begin{aligned}\lambda_{k+1}-\lambda_{k}\leq
C_{n,\Omega}k^{\frac{1}{n}},\end{aligned}
\end{equation}
where $C_{n,\Omega}$ is a constant dependent on $\Omega$ itself and the dimension $n$.
\end{thm}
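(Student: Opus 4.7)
The plan is to extend the Chen-Zheng-Yang strategy behind \eqref{czy-1} to the extrinsic setting, using Chen-Cheng's universal inequality \eqref{cc1-ineq} in place of Yang's \eqref{y1-ineq} and their extrinsic Cheng-Yang bound \eqref{cc2-ineq} in the final step. Throughout I write $C = n^{2}\|H\|^{2}/4$, $\bar\lambda = \tfrac{1}{k}\sum_{i=1}^{k}\lambda_{i}$, $T = \tfrac{1}{k}\sum_{i=1}^{k}(\lambda_{i}-\bar\lambda)^{2}$, and $\bar b = \bar\lambda + C$.

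First I would turn \eqref{cc1-ineq} into a Cheng-Yang-type gap estimate. Expanding \eqref{cc1-ineq} about $\bar b$ recasts it as $p(\lambda_{k+1}+C)\leq 0$, where $p(\mu) = \mu^{2}-2\bar b(1+\tfrac{2}{n})\mu+(1+\tfrac{4}{n})(\bar b^{2}+T)$ is a quadratic with roots $A_{\pm} = \bar b(1+\tfrac{2}{n}) \pm \sqrt{\tfrac{4\bar b^{2}}{n^{2}}-(1+\tfrac{4}{n})T}$. Applying \eqref{cc1-ineq} at index $k-1$ gives $\sum_{i=1}^{k-1}(\lambda_{k}-\lambda_{i})^{2}\leq\tfrac{4}{n}\sum_{i=1}^{k-1}(\lambda_{k}-\lambda_{i})(\lambda_{i}+C)$, and since the $i=k$ contribution to $p$ vanishes at $\mu = \lambda_{k}+C$, this rearranges to $p(\lambda_{k}+C)\leq 0$ with the \emph{same} $p$. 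Hence both $\lambda_{k}+C$ and $\lambda_{k+1}+C$ lie in $[A_{-},A_{+}]$, and I obtain the extrinsic counterpart of the Cheng-Yang bound \eqref{cylk1-lk}, $\lambda_{k+1}-\lambda_{k}\leq A_{+}-A_{-} = 2\sqrt{\tfrac{4\bar b^{2}}{n^{2}}-(1+\tfrac{4}{n})T}$.

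Next I would bound the discriminant to order $k^{2/n}$ so that its square root produces the desired $k^{1/n}$. The Weyl asymptotics \eqref{Weyl-asmptotic-formula} show that the leading $k^{4/n}$-parts of $\tfrac{4\bar b^{2}}{n^{2}}$ and $(1+\tfrac{4}{n})T$ cancel exactly, so the discriminant is genuinely of order $k^{2/n}(\lambda_{1}+C)^{2}$ rather than $k^{4/n}(\lambda_{1}+C)^{2}$. To make this precise at every finite $k$, I will combine Chen-Cheng's extrinsic Cheng-Yang bound \eqref{cc2-ineq}, namely $\bar b \leq C_{0}(n)k^{2/n}(\lambda_{1}+C)$, with a matching quantitative lower bound for $T$ extracted from the Chen-Yang recursion \cite{CY3} applied to each term of the variance sum. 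Taking square roots then yields $\lambda_{k+1}-\lambda_{k}\leq C_{n,\Omega}k^{1/n}$, with $C_{n,\Omega}$ depending only on $n$, $\lambda_{1}$, and $\|H\|^{2}$.

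The hard part will be precisely this last step: promoting the asymptotic cancellation between $\tfrac{4\bar b^{2}}{n^{2}}$ and $(1+\tfrac{4}{n})T$ into a rigorous upper bound on the discriminant at every finite $k$. I expect this is exactly where the ``new trial functions'' advertised in the abstract enter --- a family refined to target the gap, rather than $\lambda_{k+1}$ alone, should sharpen \eqref{cc1-ineq} itself so that the discriminant of the resulting quadratic trap is already manifestly $O(k^{2/n}(\lambda_{1}+C)^{2})$, bypassing the need for a separate lower bound on $T$ and making the conclusion follow by a mechanical computation.
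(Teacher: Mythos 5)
Your reduction of Chen--Cheng's inequality \eqref{cc1-ineq} to the quadratic trap $p(\mu)\leq 0$, and your observation that $p(\lambda_{k}+C)\leq 0$ as well (by extending the $(k-1)$-level inequality with a vanishing $i=k$ term), are both correct; this reproduces the extrinsic analogue of Cheng--Yang's inequality \eqref{cylk1-lk} and gives
\begin{equation*}
\lambda_{k+1}-\lambda_{k}\;\leq\;2\sqrt{\tfrac{4\bar b^{2}}{n^{2}}-\left(1+\tfrac{4}{n}\right)T}.
\end{equation*}
However, the crucial remaining step---bounding this discriminant by $O\bigl(k^{2/n}(\lambda_{1}+C)^{2}\bigr)$---is exactly where your argument has a genuine gap, and it is not a gap that the Cheng--Yang recursion \eqref{recursion} can fill. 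That recursion controls $\mathcal{H}_{k}=\tfrac{2}{n}\bar b^{2}-T$ from \emph{above}, giving $\mathcal{H}_{k}\lesssim k^{4/n}\mu_{1}^{2}$, which upon substitution into the discriminant yields only $O(k^{4/n}\mu_{1}^{2})$ and hence the weak bound $\lambda_{k+1}-\lambda_{k}\lesssim k^{2/n}\lambda_{1}$. To get $k^{1/n}$ you would instead need a quantitative \emph{lower} bound on the variance $T$, of order $k^{4/n}$, that tightly matches $\tfrac{4\bar b^{2}}{n^{2}}$; the Weyl-asymptotic cancellation you observe is only a statement about $k\to\infty$ and does not supply such a bound at finite $k$. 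No standard universal-inequality technique produces a lower bound on $T$---indeed, this is precisely why \eqref{cylk1-lk}-type estimates never yielded the Chen--Zheng--Yang conjecture directly.

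The paper's proof takes an essentially different route that bypasses the variance entirely. It does not pass through the single-eigenvalue universal inequality \eqref{cc1-ineq} at all; instead it uses the \emph{two-eigenvalue} formula of Lemma \ref{lem2.1} (the Chen--Zheng--Yang lemma), which from the outset compares $\lambda_{k+1}$ and $\lambda_{k+2}$ against $\lambda_{i}$, so that after an elementary manipulation (Lemma \ref{lem2.3}) the quantity $(\lambda_{k+2}-\lambda_{k+1})^{2}$ appears directly on the left. The proof then chooses \emph{scaled} coordinate trial functions $h_{j}=\alpha_{j}x^{j}$ (rather than the unscaled ones) so that the normalization $a_{j}^{2}\geq b_{j}$ and the cross-term cancellation \eqref{sum-3.2} hold, applies Lemma \ref{lem3.1} to control the right-hand side by $\lambda_{1}+c$ with $c=\tfrac14\inf n^{2}H^{2}$, and invokes \eqref{cc2-ineq} once to replace $\lambda_{k+2}+c$ by $C_{0}(n)(k+1)^{2/n}(\lambda_{1}+c)$. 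Your instinct that ``new trial functions'' are where the paper's mechanism lives is right, but the key idea you are missing is that those trial functions are fed into the two-eigenvalue Lemma \ref{lem2.1}, not back into \eqref{cc1-ineq}; it is the two-eigenvalue structure, producing the gap on the left rather than trapping two consecutive eigenvalues in an interval, that makes the $k^{1/n}$ rate attainable without ever estimating $T$.
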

Next, assume that $M^{n}$ is an $n$-dimensional closed Riemannian manifold. We also consider the closed eigenvalue problem of Laplacian:

\begin{equation}
 \label{Eigen-Prob-closed}\Delta u=-\overline{\lambda} u,\ \ {\rm
in}\ \ \ \ M^{n}.\end{equation}
It is well known that the spectrum of the closed eigenvalue problem  \eqref{Eigen-Prob-closed}  is
discrete and satisfies the following:

\begin{equation*}
0=\overline{\lambda}_{0}<\overline{\lambda}_{1}\leq\overline{\lambda}_{2}\leq\overline{\lambda}_{3}\leq\cdots\leq\overline{\lambda}_{k}\leq\cdots\rightarrow+\infty,
\end{equation*}
where $\overline{\lambda}_{k}$ is the $k$-th eigenvalue of the closed eigenvalue problem \eqref{Eigen-Prob-closed} and each eigenvalue is repeated according to its multiplicity.

When $M^{n}$ is an $n$-dimensional compact homogeneous Riemannian manifold, for the closed eigenvalue problem \eqref{Eigen-Prob-closed}, Li\cite{L} proved

\begin{equation*}\begin{aligned}\overline{\lambda}_{k+1}-\overline{\lambda}_{k}\leq
\frac{2}{k+1}\left(\sqrt{\left(\sum^{k}_{i=1}\overline{\lambda}_{i}\right)^{2}+(k+1)\sum^{k}_{i=1}\overline{\lambda}_{i}\overline{\lambda}_{1}}
+\sum^{k}_{i=1}\overline{\lambda}_{i}\right)+\overline{\lambda}_{1},\end{aligned}
\end{equation*}
 When $M^{n}$ is
an $n$-dimensional compact minimal submanifold in a unit sphere $S^{N}(1)$,  P. C.Yang and Yau \cite{YY} proved the eigenvalues of the closed eigenvalue problem satisfy the following eigenvalue inequality:
\begin{equation*}\begin{aligned}\overline{\lambda}_{k+1}-\overline{\lambda}_{k}\leq n+
\frac{2}{n(k+1)}\left(\sqrt{\left(\sum^{k}_{i=1}\overline{\lambda}_{i}\right)^{2}+n^{2}(k+1)
\sum^{k}_{i=1}\overline{\lambda}_{i}\overline{\lambda}_{1}}+\sum^{k}_{i=1}\overline{\lambda}_{i}\right).\end{aligned}\end{equation*} Furthermore, Harrel II and Michel (see \cite{HM} and \cite{H3}) obtained an abstract inequality of algebraic version. By applying the algebraic inequality, they proved that,
if $M^{n}$ is an $n$-dimensional compact minimal submanifold in a unit
sphere $\mathbb{S}^{N}(1)$, then

\begin{equation}\begin{aligned}\label{hm-ineq-1}\overline{\lambda}_{k+1}-\overline{\lambda}_{k}\leq n+
\frac{4}{n(k+1)}\sum^{k}_{i=1}\overline{\lambda}_{i},\end{aligned}
\end{equation}
and if $M^{n}$ is an $n$-dimensional compact homogeneous Riemannian manifold, then

\begin{equation}\begin{aligned}\label{hm-ineq-2}\overline{\lambda}_{k+1}-\overline{\lambda}_{k}\leq
\frac{4}{k+1}\sum^{k}_{i=1}\overline{\lambda}_{i}+\overline{\lambda}_{1},\end{aligned}
\end{equation} Therefore, the above inequalities \eqref{hm-ineq-1} and \eqref{hm-ineq-2} made significant improvement to earlier
estimates of differences of consecutive eigenvalues of Laplacian introduced by
P. C. Yang and Yau  \cite{YY}, Leung  \cite{Le}, Li \cite{L} and Harrel II \cite{HM}.
Cheng and Yang \cite{CY1} also considered the same eigenvalue problem and proved that, when $M^{n}$
is an $n$-dimensional compact homogeneous Riemannian manifold  without  boundary, then the eigenvalues of the
close eigenvalue problem \eqref{Eigen-Prob-closed} satisfy

\begin{equation*}\begin{aligned}\overline{\lambda}_{k+1}-\overline{\lambda}_{k}\leq\left[\left(
\frac{4}{k+1}\sum^{k}_{i=1}\overline{\lambda}_{i}+\overline{\lambda}_{1}\right)^{2}-\frac{20}{k+1}\sum^{k}_{i=0}\left(\overline{\lambda}_{i}
-\frac{1}{k+1}\sum^{k}_{j=1}\overline{\lambda}_{j}\right)^{2}\right]^{\frac{1}{2}};\end{aligned}
\end{equation*}
and when $M^{n}$ is an $n$-dimensional compact minimal submanifold without boundary in a unit sphere $\mathbb{S}^{N}(1)$, then the eigenvalues of the
close eigenvalue problem \eqref{Eigen-Prob-closed} satisfy

\begin{equation*}\begin{aligned}\overline{\lambda}_{k+1}-\overline{\lambda}_{k}\leq2\left[\left(
\frac{2}{n}\frac{1}{k}\sum^{k}_{i=0}\overline{\lambda}_{i}+\frac{n}{2}\right)^{2}-\left(1+\frac{4}{n}\right)\frac{1}{k+1}\sum^{k}_{j=0}\left(\overline{\lambda}_{j}
-\frac{1}{k}\sum^{k}_{i=0}\overline{\lambda}_{i}\right)^{2}\right]^{\frac{1}{2}}.\end{aligned}
\end{equation*}
In \cite{CY2}, Cheng and Yang
investigated the closed eigenvalue problem \eqref{Eigen-Prob-closed} of the Laplacian on a
compact complex hypersurface without boundary in the standard complex projective space $\mathbb{C}P^{n}(4)$  with holomorphic sectional curvature $4$.
They proved the following universal inequality
\begin{equation}\label{cy-universal-2}\begin{aligned}\sum^{k}_{i=0}\left(\overline{\lambda}_{k+1}-\overline{\lambda}_{i}\right)^{2}\leq\frac{2}{n}
\sum^{k}_{i=1}\left(\overline{\lambda}_{k+1}-\overline{\lambda}_{i}\right)\left(\overline{\lambda}_{i}+2n(n+1)\right).\end{aligned}
\end{equation}
By \eqref{cy-universal-2}, Cheng and Yang gave an explicit
estimate of the $k+1$-th eigenvalue of Laplacian on such objects by its first $k$
eigenvalues and proved the following inequality (cf. \cite{CY2}):

\begin{equation}\label{cylk1-lk-2}\begin{aligned}\overline{\lambda}_{k+1}-\overline{\lambda}_{k}\leq
2\left\{\left[\frac{1}{n}\frac{1}{k+1}\sum^{k}_{i=1}\overline{\lambda}_{i}+2(n+1)\right]^{2}
-\left(1+\frac{2}{n}\right)\frac{1}{k+1}\sum^{k}_{j=1}\left(\overline{\lambda}_{j}-\frac{1}{k+1}\sum^{k}_{i=1}\overline{\lambda}_{i}\right)^{2}\right\}^{\frac{1}{2}}.\end{aligned}
\end{equation}

In this paper, we investigate the eigenvalues of the closed eigenvalue problem \eqref{Eigen-Prob-closed} of the Laplacian on the closed Riemannian manifolds and prove the following:

\begin{thm}
\label{thm1.2}Let $(M^{n},g)$ be an $n$-dimensional closed Riemannian manifold, which is isometrically immersed into $(n+\overline{p})$-dimensional Euclidean space $\mathbb{R}^{n+\overline{p}}$, and $\overline{\lambda}_{i}$ be the $i$-th $(i=0,1,2,\cdots)$
eigenvalue of the closed eigenvalue problem \eqref{Eigen-Prob-closed}. Then, for any $k\geq1$, we have

\begin{equation}\label{z1}\begin{aligned}\overline{\lambda}_{k+1}-\overline{\lambda}_{k}\leq
\overline{C}_{n,M^{n}}k^{\frac{1}{n}},\end{aligned}
\end{equation}
where $\overline{C}_{n,M^{n}}$ is a constant dependent on $M^{n}$ itself and the dimension $n$.
\end{thm}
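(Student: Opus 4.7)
The plan is to run in the closed setting the same three-stage argument used for Theorem~\ref{thm1.1}: first establish a Chen-Cheng-type universal inequality for the closed eigenvalue problem~\eqref{Eigen-Prob-closed}, then feed it into the Cheng-Yang recursion of~\cite{CY3} to obtain the correct $k^{2/n}$ bound on $\overline{\lambda}_{k+1}$, and finally apply the gap-extraction trick of Chen-Zheng-Yang~\cite{CZY} to upgrade this into the sharp $k^{1/n}$ bound on $\overline{\lambda}_{k+1}-\overline{\lambda}_{k}$.

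For the first two stages, I would use the components $x_{1},\ldots,x_{n+\overline{p}}$ of the isometric immersion as multipliers in the Rayleigh-Ritz principle for $\overline{\lambda}_{k+1}$. For each index $\alpha$ and each $0\leq i\leq k$ I form
$$\varphi_{\alpha i}=(x_{\alpha}-c_{\alpha})\overline{u}_{i}-\sum_{j=0}^{k}a_{\alpha ij}\,\overline{u}_{j},$$
choosing the constants $c_{\alpha}$ and $a_{\alpha ij}$ so that $\varphi_{\alpha i}$ is $L^{2}(M^{n})$-orthogonal to $\overline{u}_{0},\ldots,\overline{u}_{k}$ (the closed case forces us to include the constant eigenfunction $\overline{u}_{0}$ in the orthogonalisation). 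A standard commutator computation, combined with the immersion identities $\sum_{\alpha}|\nabla x_{\alpha}|^{2}=n$ and $\sum_{\alpha}(\Delta x_{\alpha})^{2}=n^{2}|H|^{2}$, should then yield the closed analog of \eqref{cc1-ineq},
\begin{equation*}
\sum_{i=0}^{k}(\overline{\lambda}_{k+1}-\overline{\lambda}_{i})^{2}\leq\frac{4}{n}\sum_{i=0}^{k}(\overline{\lambda}_{k+1}-\overline{\lambda}_{i})\Bigl(\overline{\lambda}_{i}+\tfrac{n^{2}\|H\|^{2}}{4}\Bigr).
\end{equation*}
Plugging this into the Cheng-Yang recursion formula, applied to the shifted sequence $\mu_{i}:=\overline{\lambda}_{i}+n^{2}\|H\|^{2}/4$, one obtains in the usual way the closed analog of \eqref{cc2-ineq},
$$\overline{\lambda}_{k+1}+\tfrac{n^{2}\|H\|^{2}}{4}\leq C_{0}(n)(k+1)^{\frac{2}{n}}\Bigl(\overline{\lambda}_{1}+\tfrac{n^{2}\|H\|^{2}}{4}\Bigr),$$
i.e.\ the sharp $k^{2/n}$ order for $\overline{\lambda}_{k+1}$ itself.

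To push from $k^{2/n}$ on $\overline{\lambda}_{k+1}$ down to $k^{1/n}$ on the gap, I would follow~\cite{CZY}. Starting again from the universal inequality of the first stage, apply $k(\overline{\lambda}_{k+1}-\overline{\lambda}_{k})^{2}\leq\sum_{i=0}^{k}(\overline{\lambda}_{k+1}-\overline{\lambda}_{i})^{2}$ on the left-hand side, and on the right a Cauchy-Schwarz/Chebyshev manipulation that decouples the gap side from the spectrum side and produces a bilinear bound of the shape
$$(\overline{\lambda}_{k+1}-\overline{\lambda}_{k})^{2}\leq C(n)\Bigl(\overline{\lambda}_{1}+\tfrac{n^{2}\|H\|^{2}}{4}\Bigr)\Bigl(\overline{\lambda}_{k+1}+\tfrac{n^{2}\|H\|^{2}}{4}\Bigr).$$
Inserting the $k^{2/n}$ bound on $\overline{\lambda}_{k+1}+n^{2}\|H\|^{2}/4$ and taking square roots gives $\overline{\lambda}_{k+1}-\overline{\lambda}_{k}\leq\overline{C}_{n,M^{n}}k^{1/n}$, with an explicit constant depending only on $n$, $\overline{\lambda}_{1}$ and $\|H\|^{2}$.

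The main obstacle I anticipate lies in the first stage. Because the closed problem has no boundary condition, the constant eigenfunction $\overline{u}_{0}$ must be carried through both the orthogonalisation and all integration-by-parts identities, and one has to verify that the extrinsic curvature term $n^{2}\|H\|^{2}/4$ enters with the same coefficient as in the Dirichlet version~\eqref{cc1-ineq}, so that the Cheng-Yang recursion can be applied verbatim. A second delicate point is the Cauchy-Schwarz step in the gap extraction: it must be set up so that one factor of $\overline{\lambda}_{1}$ (rather than a second copy of $\overline{\lambda}_{k+1}$) appears in the bilinear bound, since otherwise only the weaker $k^{2/n}$ gap estimate emerges. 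Once these two technicalities are in place, the remainder of the argument is purely algebraic and proceeds in parallel with~\cite{CY3} and~\cite{CZY}.
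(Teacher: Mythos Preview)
Your first two stages are essentially correct and match the paper. The third stage, however, has a genuine gap that you yourself flag but do not resolve: from the Yang-type sum inequality
\[
\sum_{i=0}^{k}(\overline{\lambda}_{k+1}-\overline{\lambda}_{i})^{2}\leq\frac{4}{n}\sum_{i=0}^{k}(\overline{\lambda}_{k+1}-\overline{\lambda}_{i})\Bigl(\overline{\lambda}_{i}+\tfrac{n^{2}\|H\|^{2}}{4}\Bigr)
\]
one cannot get a bilinear bound with a factor of $\overline{\lambda}_{1}$ by Cauchy--Schwarz/Chebyshev alone. Using $(k+1)(\overline{\lambda}_{k+1}-\overline{\lambda}_{k})^{2}\leq\sum_{i}(\overline{\lambda}_{k+1}-\overline{\lambda}_{i})^{2}$ and then estimating the right-hand side termwise inevitably produces $(\overline{\lambda}_{k+1}+c)(\overline{\lambda}_{k}+c)$, both of order $k^{2/n}$, and you end up with the weak $k^{2/n}$ gap bound you wanted to avoid. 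No rearrangement of this summed inequality singles out the index $i=1$.

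The paper (following \cite{CZY}) bypasses the Yang sum inequality entirely at this step. Lemma~\ref{lem2.2} is a \emph{pointwise} (in $i$) inequality, obtained by a separate Rayleigh--Ritz/Lagrange-multiplier argument, of the form
\[
\bigl((\overline{\lambda}_{k+2}-\overline{\lambda}_{i})+(\overline{\lambda}_{k+1}-\overline{\lambda}_{i})\bigr)\|\nabla h\,u_{i}\|^{2}
\leq 2\sqrt{(\overline{\lambda}_{k+2}-\overline{\lambda}_{i})(\overline{\lambda}_{k+1}-\overline{\lambda}_{i})}\,\bigl\||\nabla h|^{2}u_{i}\bigr\|
+\|2\langle\nabla h,\nabla u_{i}\rangle+u_{i}\Delta h\|^{2}.
\]
Lemma~\ref{lem2.4} then rewrites the left side as $\frac{a^{2}+b}{2}\bigl(\sqrt{\overline{\lambda}_{k+2}-\overline{\lambda}_{i}}-\sqrt{\overline{\lambda}_{k+1}-\overline{\lambda}_{i}}\bigr)^{2}$ plus a nonnegative remainder, and after multiplying by the conjugate square one lands directly on $(\overline{\lambda}_{k+2}-\overline{\lambda}_{k+1})^{2}\lesssim(\overline{\lambda}_{k+2}+\rho)\cdot\|2\langle\nabla h,\nabla u_{i}\rangle+u_{i}\Delta h\|^{2}$. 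Now one simply \emph{chooses} $i=1$, and for $h=x^{\alpha}$ the last norm is controlled by $\overline{\lambda}_{1}+c$; this is how the $\overline{\lambda}_{1}$ factor enters. A second technical point you miss is that on a submanifold $|\nabla x^{\alpha}|$ is not constant, so the hypothesis $a_{j}^{2}\geq b_{j}$ of Lemma~\ref{lem2.4} need not hold for the raw coordinates; the paper fixes this by using scaled coordinates $h_{j}=\alpha_{j}x^{j}$ with the $\alpha_{j}$ chosen to enforce that inequality.
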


\noindent
This paper is organized as follows. In section
\ref{sec2}, we prove several key lemmas and establish several general formulas of the eigenvalues of the Dirichlet problem. In addition,by the same method, we also yield the corresponding general formulas with respect to the closed eigenvalue problem. By
utilizing those general formulas, we give the proofs of
theorem \ref{thm1.1} and theorem \ref{thm1.2} in section \ref{sec3}. We exploit the eigenvalue of the Dirichlet problem on the unit sphere and cylinder in section \ref{sec4}.
In section \ref{sec5}, we investigate the eigenvalues of the eigenvalue problem of the Laplacian
on a connected bounded domain  and on a compact complex hypersurface without boundary in the standard
complex projective space $\mathbb{C}P^{n}(4)$ with holomorphic sectional curvature $4$. In addition, we consider the eigenvalues of the closed eigenvalue problem of Laplacian
on the compact Riemannian manifolds without boundary in section \ref{sec6}. In the last section, we give some gap conjectures of consecutive eigenvalues of the Dirichlet problem on complete Riemannian manifolds. As a further interest,
we provide some important examples to support those conjectures proposed in this section.

\vskip5mm

\section{Some Technical Lemmas}\label{sec2}
\vskip3mm

In order to give the proofs of theorem \ref{thm1.1} and theorem \ref{thm1.2}, we would like to prove some key lemmas in this section. At first, we recall
the following algebraic inequality which is proved by Chen, Zheng and Yang in \cite{CZY}. By applying this algebraic inequality, Chen, Zheng and Yang established
the following general formula (see lemma 2.1 in \cite{CZY}).

\begin{lem}\label{lem2.1}
Let $(M^{n},g)$ be an $n$-dimensional complete Riemannian manifold and $\Omega$ a bounded domain
with piecewise smooth boundary $\partial\Omega$ on $M^{n}$.
Assume that $\lambda_{i}$  is  the $i^{\text{th}}$ eigenvalue of the
Dirichlet problem \eqref{Eigenvalue-Problem}  and $u_{i}$ is an
orthonormal eigenfunction corresponding to $\lambda_{i}$, $i =
1,2,\cdots$, such that

\begin{equation*}
\left\{ \begin{aligned}\Delta u_{i}=-\lambda u_{i},\ \ \ \ \ \ \ &
in\ \ \ \ \Omega,
         \\ u_{i}=0,\ \ \ \ \ \ \ \ \ \ \ \ \ \ &  on\ \ \partial\Omega,
\\
\int_{\Omega}
u_{i}u_{j}dv=\delta_{ij},\ \ & for~any  \ i,j=1,2,\cdots.
                          \end{aligned} \right.
                          \end{equation*}
Then, for any function $h(x)\in C^{3}(\Omega)\cap C^{2}(\overline{\Omega})$ and any
integer $k,i \in \mathbb{Z}^{+},~(k>i\geq1)$,  eigenvalues of the Dirichlet problem \eqref{Eigenvalue-Problem} satisfy

\begin{equation}\begin{aligned}\label{general-formula-1}&((\lambda_{k+2}-\lambda_{i})+(\lambda_{k+1}-\lambda_{i}))\|\nabla hu_{i}\|^{2}
\\&\leq2\sqrt{(\lambda_{k+2}-\lambda_{i})(\lambda_{k+1}-\lambda_{i})\||\nabla h|^{2}u_{i}\|^{2}}
+\|2\langle\nabla h,\nabla u_{i}\rangle+u_{i}\Delta h\|^{2},
\end{aligned}\end{equation}
where
\begin{equation*}
\|h(x)\|^{2} =\int_{\Omega}h^{2}(x)dv.
\end{equation*}
\end{lem}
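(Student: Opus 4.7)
My plan is to use the Rayleigh--Ritz variational characterization with two nested trial functions, one aimed at $\lambda_{k+2}$ and one at $\lambda_{k+1}$, and then combine the resulting variational bounds via an algebraic identity that manufactures the geometric-mean term. For each fixed $i$ with $1 \leq i < k$, I would set $a_{ij} := \int_\Omega h u_i u_j\,dv$ and introduce $\phi_i := h u_i - \sum_{j=1}^{k+1} a_{ij} u_j$ and $\psi_i := \phi_i + a_{i,k+1} u_{k+1} = h u_i - \sum_{j=1}^{k} a_{ij} u_j$. Both vanish on $\partial\Omega$ since $u_i$ does; by construction $\phi_i$ is $L^2$-orthogonal to $u_1,\ldots,u_{k+1}$ and $\psi_i$ is orthogonal to $u_1,\ldots,u_k$. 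The min--max principle therefore supplies the two variational bounds $\lambda_{k+2}\|\phi_i\|^2 \leq \|\nabla\phi_i\|^2$ and $\lambda_{k+1}\|\psi_i\|^2 \leq \|\nabla\psi_i\|^2$.

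Next I would unpack these Rayleigh bounds using standard integration by parts. Writing $B_i := u_i\Delta h + 2\langle\nabla h,\nabla u_i\rangle$, the crucial ingredients are: the product rule $\Delta(hu_i) = B_i - \lambda_i h u_i$; the identity $\|\nabla(hu_i)\|^2 = \int_\Omega |\nabla h|^2 u_i^2\,dv + \lambda_i\|hu_i\|^2$; and the commutator formula $\int_\Omega u_j B_i\,dv = (\lambda_i-\lambda_j)\,a_{ij}$, derived by applying Green's theorem twice together with $\Delta u_\ell = -\lambda_\ell u_\ell$. Expanding $\|\nabla\phi_i\|^2$ in the orthonormal basis $\{u_j\}$, substituting, and simplifying yields $(\lambda_{k+2}-\lambda_i)\|\phi_i\|^2 \leq -\int_\Omega \phi_i B_i\,dv$, and analogously $(\lambda_{k+1}-\lambda_i)\|\psi_i\|^2 \leq -\int_\Omega \psi_i B_i\,dv$. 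A second application of the commutator identity rewrites each right-hand side explicitly as $\||\nabla h|\,u_i\|^2 + \sum_j(\lambda_i-\lambda_j)\,a_{ij}^2$, with the sum running up to $k+1$ in the first inequality and up to $k$ in the second.

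The last step, which I expect to be the main obstacle, is combining these two bounds into the stated inequality with its geometric-mean term. Here I would invoke the algebraic inequality cited from Chen--Zheng--Yang: writing $X := \lambda_{k+2}-\lambda_i$ and $Y := \lambda_{k+1}-\lambda_i$, the idea is to bound each of $-\int_\Omega \phi_i B_i\,dv$ and $-\int_\Omega \psi_i B_i\,dv$ by a weighted Cauchy--Schwarz with a free parameter $t>0$, sum the two resulting inequalities, and then optimize $t$ so that the arithmetic combination $X+Y$ on the left is traded against a geometric-mean factor $2\sqrt{XY}\cdot\||\nabla h|^2 u_i\|$ on the right, while the residual $\|B_i\|^2 = \|2\langle\nabla h,\nabla u_i\rangle + u_i\Delta h\|^2$ absorbs all remaining contributions. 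The delicate bookkeeping is to handle the $a_{i,k+1}^2$-term produced by the discrepancy $\psi_i - \phi_i = a_{i,k+1} u_{k+1}$: it must be shown to cancel against the corresponding mismatch between the truncated sums $\sum_{j\leq k+1}$ and $\sum_{j\leq k}$, so that no spurious $a_{ij}^2$-contributions survive and the final right-hand side involves only $\||\nabla h|^2 u_i\|^2$ and $\|B_i\|^2$.
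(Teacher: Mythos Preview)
The paper itself does not prove this lemma; it quotes the result from Chen--Zheng--Yang \cite{CZY} and, in the proof of the companion Lemma~\ref{lem2.2}, describes the original argument only as ``apply the Rayleigh--Ritz inequality and Lagrange method of multipliers in real Banach spaces.'' Your framework---two nested trial functions $\phi_i\perp\{u_1,\dots,u_{k+1}\}$ and $\psi_i\perp\{u_1,\dots,u_k\}$, followed by an optimized combination---is consistent with that description, and all of your intermediate identities (the product rule for $\Delta(hu_i)$, the formula $\|\nabla(hu_i)\|^2=\int|\nabla h|^2u_i^2+\lambda_i\|hu_i\|^2$, the commutator $\int u_jB_i=(\lambda_i-\lambda_j)a_{ij}$, and the rewriting $-\int\phi_iB_i=\int|\nabla h|^2u_i^2+\sum_{j\le k+1}(\lambda_i-\lambda_j)a_{ij}^2$) are correct and standard.

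The gap is in your last step. The quantities produced by the two Rayleigh bounds are $\|\phi_i\|^2$, $\|\psi_i\|^2$ (both built from $\|hu_i\|^2$ and the $a_{ij}^2$), the scalar $A:=\int_\Omega|\nabla h|^2u_i^2$, and $\|B_i\|^2$. The term $\||\nabla h|^2u_i\|^2=\int_\Omega|\nabla h|^4u_i^2$ that must appear under the square root in \eqref{general-formula-1} is \emph{not} among these, and a weighted Cauchy--Schwarz applied to $-\int\phi_iB_i$ and $-\int\psi_iB_i$ followed by optimization in a scalar parameter $t$ cannot manufacture it from those ingredients. (Observe also that in the target inequality the quantity $A$ sits on the \emph{left} with coefficient $(\lambda_{k+2}-\lambda_i)+(\lambda_{k+1}-\lambda_i)$, whereas in your Rayleigh bounds it appears on the right-hand side; the mechanism for this transposition is absent from your sketch.) The phrase ``Lagrange method of multipliers in real Banach spaces'' suggests that the Chen--Zheng--Yang argument introduces an additional, possibly infinite-dimensional, optimization---plausibly over a parametrized family of test functions in which $|\nabla h|^2u_i$ (whose regularity is exactly what forces the hypothesis $h\in C^3$) enters as a competitor---and it is this missing ingredient you still need to locate before the bookkeeping with the $a_{i,k+1}^2$ discrepancy can even be attempted.
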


For the closed eigenvalue problem, we can also prove the following by the same method given by:

\begin{lem}\label{lem2.2}
Let $(M^{n},g)$ be an $n$-dimensional closed Riemannian manifold.
Assume that $\overline{\lambda}_{i}$  is  the $i^{\text{th}}$ eigenvalue of the
eigenvalue problem \eqref{Eigen-Prob-closed}  and $u_{i}$ is an
orthonormal eigenfunction corresponding to $\overline{\lambda}_{i}$, $i =0,
1,2,\cdots$, such that
\begin{equation*}
\left\{ \begin{aligned}\Delta u_{i}=-\lambda u_{i},\ \ \ \ \ \ \ &
in\ \ \ \ \Omega,
\\
\int_{\Omega}
u_{i}u_{j}dv=\delta_{ij},\ \ & for~any  \ i,j=0,1,2,\cdots.
                          \end{aligned} \right.
                          \end{equation*}
Then, for any function $h(x)\in C^{3}(\Omega)\cap C^{2}(\overline{\Omega})$ and any
integer $k,i \in \mathbb{Z},~(k>i\geq0)$,  eigenvalues of the closed eigenvalue problem \eqref{Eigen-Prob-closed} satisfy

\begin{equation}\begin{aligned}\label{general-formula-1}&((\overline{\lambda}_{k+2}-\overline{\lambda}_{i})+(\overline{\lambda}_{k+1}-\overline{\lambda}_{i}))\|\nabla hu_{i}\|^{2}
\\&\leq2\sqrt{(\overline{\lambda}_{k+2}-\overline{\lambda}_{i})(\overline{\lambda}_{k+1}-\overline{\lambda}_{i})\||\nabla h|^{2}u_{i}\|^{2}}
+\|2\langle\nabla h,\nabla u_{i}\rangle+u_{i}\Delta h\|^{2},
\end{aligned}\end{equation}
where
\begin{equation*}
\|h(x)\|^{2} =\int_{\Omega}h^{2}(x)dv.
\end{equation*}
\end{lem}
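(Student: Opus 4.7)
The plan is to mimic the proof of Lemma~\ref{lem2.1} almost verbatim, exploiting the fact that since $M^{n}$ is closed (no boundary), every integration by parts that appears in the Dirichlet argument of Chen--Zheng--Yang is free of boundary contributions, so the computation is, if anything, cleaner. The only bookkeeping difference from the Dirichlet setup is that the $L^{2}(M^{n})$-orthonormal basis of eigenfunctions is now indexed from $0$, with $u_{0}$ a constant and $\overline{\lambda}_{0}=0$; this shift does not affect any of the estimates because $u_{0}$ is killed by every gradient that appears.

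First I would fix $i$ with $0\leq i\leq k$, set $a_{i\ell}=\int_{M^{n}} h u_{i} u_{\ell}\,dv$ for $\ell=0,1,\dots,k$, and form the trial function
\begin{equation*}
\phi_{i}=h u_{i}-\sum_{\ell=0}^{k}a_{i\ell}u_{\ell}.
\end{equation*}
By construction $\phi_{i}\perp u_{0},\dots,u_{k}$ in $L^{2}(M^{n})$, so the Rayleigh--Ritz min-max characterization of $\overline{\lambda}_{k+1}$ gives $\overline{\lambda}_{k+1}\|\phi_{i}\|^{2}\leq \int_{M^{n}}|\nabla\phi_{i}|^{2}\,dv$. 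Coupling in a second gap $\overline{\lambda}_{k+2}-\overline{\lambda}_{i}$ — so that both consecutive eigenvalues appear in the final inequality — is achieved by invoking the same Chen--Zheng--Yang algebraic inequality used in Lemma~\ref{lem2.1}, applied to the two symmetric bilinear forms built from the Fourier coefficients of $\phi_{i}$ and of its image under the commutator $[\Delta,h]$ against the full eigenbasis $\{u_{j}\}_{j\geq 0}$.

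Next I would expand the commutator as $\Delta(h u_{i})=u_{i}\Delta h+2\langle\nabla h,\nabla u_{i}\rangle-\overline{\lambda}_{i} h u_{i}$ and, using that $M^{n}$ has no boundary, integrate by parts freely to compute the two quantities $\int_{M^{n}}\phi_{i}(-\Delta)\phi_{i}\,dv$ and $\int_{M^{n}}\phi_{i}\bigl(2\langle\nabla h,\nabla u_{i}\rangle+u_{i}\Delta h\bigr)\,dv$ in terms of $\|\nabla h\,u_{i}\|^{2}$, $\||\nabla h|^{2}u_{i}\|^{2}$ and $\|2\langle\nabla h,\nabla u_{i}\rangle+u_{i}\Delta h\|^{2}$. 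Substituting these identities into the CZY algebraic inequality then produces, after grouping the coefficients of $(\overline{\lambda}_{k+2}-\overline{\lambda}_{i})$ and $(\overline{\lambda}_{k+1}-\overline{\lambda}_{i})$ and using the arithmetic-geometric manipulation that yields the square-root on the right, exactly the inequality \eqref{general-formula-1}.

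The only point requiring attention is verifying that the algebraic identities used by Chen--Zheng--Yang carry over when the eigenbasis starts at index $0$. Since $u_{0}$ is constant, $\nabla u_{0}=0$ and $\Delta u_{0}=0$, so every sum $\sum_{\ell=0}^{k}$ that features a derivative of $u_{\ell}$ reduces to the corresponding sum $\sum_{\ell=1}^{k}$, while the purely $L^{2}$ terms gain only a harmless constant summand that is already absorbed in $\|\phi_{i}\|^{2}$. Thus no estimate is actually changed, and the Dirichlet proof of \eqref{general-formula-1} transplants without modification to the closed setting, which I expect to be the main — though very mild — technical obstacle.
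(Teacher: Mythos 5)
Your proposal matches the paper's own treatment, which simply remarks that the Chen--Zheng--Yang strategy behind Lemma~\ref{lem2.1} (Rayleigh--Ritz together with the algebraic inequality obtained via Lagrange multipliers in \cite{CZY}) transfers verbatim to the closed setting once one counts eigenvalues from $0$. Your extra bookkeeping --- that $u_{0}$ is constant so $\nabla u_{0}=0$ and $\Delta u_{0}=0$, and that integration by parts is boundary-free on a closed manifold --- is exactly the content the paper leaves implicit when it says ``notice to count the number of eigenvalues from $0$'' and omits the proof.
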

\begin{proof}Recall that the proof of lemma \ref{lem2.1} given by Chen-Zheng-Yang in \cite{CZY} is very fascinating
and the key strategy is to apply the Rayleigh-Ritz inequality and Lagrange method of multipliers in real Banach spaces .
By the same strategy as the one in \cite{CZY}, it is not difficult to give the proof of this lemma if one notices to count the number of eigenvalues from $0$.
Here, we omit it.\end{proof}
By applying lemma \ref{lem2.1}, we have

\begin{lem}\label{lem2.3}Let $\rho $ be a constant such that, for any $i=1,2,\cdots,k,$ $\lambda_{i}+\rho>0$.
Under the assumption of the lemma \ref{lem2.1}, for any $j=1,2,\cdots,l,$ and any real value function $h_{j}\in C^{3}(\Omega)\cap C^{2}(\overline{\Omega})$,  we have

\begin{equation}\begin{aligned}\label{general-formula-2}\sum_{j=1}^{l}\frac{a_{j}^{2}+b_{j}}{2}\left(\lambda_{k+2}-\lambda_{k+1}\right)^{2}
\leq4(\lambda_{k+2}+\rho)\sum_{j=1}^{l}\|2\langle\nabla h_{j},\nabla u_{i}\rangle+u_{i}\Delta h_{j}\|^{2}
,
\end{aligned}\end{equation}
where

\begin{equation*}a_{j}=\sqrt{\|\nabla h_{j}u_{i}\|^{2}},\end{equation*}

\begin{equation*}b_{j}=\sqrt{\||\nabla h_{j}|^{2}u_{i}\|^{2}},\end{equation*}
\begin{equation}a_{j}^{2}\geq b_{j},\end{equation}  and \begin{equation*}
\|h(x)\|^{2}=\int_{\Omega}h^{2}(x)dv.
\end{equation*}

\end{lem}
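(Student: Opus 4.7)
The plan is to derive the claimed gap inequality by feeding the conclusion of Lemma \ref{lem2.1} into a short chain of algebraic manipulations, one trial function $h_j$ at a time, and then summing over $j$. To streamline notation I would set $\mu:=\lambda_{k+1}-\lambda_i$ and $\nu:=\lambda_{k+2}-\lambda_i$, and write $R_j:=\|2\langle\nabla h_j,\nabla u_i\rangle+u_i\Delta h_j\|$. With this shorthand, applying Lemma \ref{lem2.1} with $h=h_j$ gives $(\mu+\nu)\,a_j^{2}\leq 2\sqrt{\mu\nu}\,b_j+R_j^{2}$ for each $j=1,\ldots,l$. All subsequent work is elementary and requires no further analytic input.

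First I would use the identity $\mu+\nu=(\sqrt{\nu}-\sqrt{\mu})^{2}+2\sqrt{\mu\nu}$ to split off a perfect square on the left, rewriting the inequality from Lemma \ref{lem2.1} as
\begin{equation*}
(\sqrt{\nu}-\sqrt{\mu})^{2}\,a_j^{2}+2\sqrt{\mu\nu}\,(a_j^{2}-b_j)\leq R_j^{2}.
\end{equation*}
At this point the stated hypothesis $a_j^{2}\geq b_j$ enters: the second term on the left is non-negative and can be discarded, leaving the clean bound $(\sqrt{\nu}-\sqrt{\mu})^{2}\,a_j^{2}\leq R_j^{2}$. Next I would conjugate, using $(\sqrt{\nu}-\sqrt{\mu})^{2}=(\nu-\mu)^{2}/(\sqrt{\nu}+\sqrt{\mu})^{2}$, and exploit $\mu\leq\nu$ together with the hypothesis $\lambda_i+\rho>0$ (which gives $\nu=\lambda_{k+2}-\lambda_i<\lambda_{k+2}+\rho$) to bound the denominator via $(\sqrt{\nu}+\sqrt{\mu})^{2}\leq 4\nu\leq 4(\lambda_{k+2}+\rho)$. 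This produces $(\lambda_{k+2}-\lambda_{k+1})^{2}\,a_j^{2}\leq 4(\lambda_{k+2}+\rho)\,R_j^{2}$.

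Finally, I would invoke the hypothesis $a_j^{2}\geq b_j$ once more, in the symmetrised form $a_j^{2}\geq\tfrac{1}{2}(a_j^{2}+b_j)$, to replace $a_j^{2}$ by $(a_j^{2}+b_j)/2$ on the left of the previous display and then sum over $j=1,\ldots,l$, arriving at exactly \eqref{general-formula-2}. I do not expect a serious obstacle in this proof: the derivation is a short algebraic reorganization of Lemma \ref{lem2.1}. The only point deserving care is the legitimate use of the inequality $a_j^{2}\geq b_j$ to discard the cross term, a step which will in subsequent sections be guaranteed by the specific choice of the trial functions $h_j$ when this lemma is deployed.
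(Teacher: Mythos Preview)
Your proposal is correct and follows essentially the same route as the paper: both arguments rewrite the conclusion of Lemma~\ref{lem2.1} using $\mu+\nu=(\sqrt{\nu}-\sqrt{\mu})^{2}+2\sqrt{\mu\nu}$, exploit $a_j^{2}\geq b_j$ to reach $\tfrac{a_j^{2}+b_j}{2}(\sqrt{\nu}-\sqrt{\mu})^{2}\leq R_j^{2}$, and then multiply through by $(\sqrt{\nu}+\sqrt{\mu})^{2}\leq 4(\lambda_{k+2}+\rho)$ before summing. The only cosmetic difference is that the paper packages the first two steps into the single algebraic identity $a_j^{2}(\mu+\nu)-2b_j\sqrt{\mu\nu}-\tfrac{a_j^{2}+b_j}{2}(\sqrt{\nu}-\sqrt{\mu})^{2}=\tfrac{a_j^{2}-b_j}{2}(\sqrt{\nu}+\sqrt{\mu})^{2}\geq 0$, whereas you split this into discarding $2\sqrt{\mu\nu}(a_j^{2}-b_j)$ first and applying $a_j^{2}\geq\tfrac12(a_j^{2}+b_j)$ at the end.
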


\begin{proof}By the assumption in this lemma, we have

\begin{equation*}\begin{aligned}\frac{a_{j}^{2}-b_{j}}{2}
\left(\sqrt{\lambda_{k+2}-\lambda_{i}}+\sqrt{\lambda_{k+1}-\lambda_{i}}\right)^{2}\geq0,\end{aligned}\end{equation*}
which is equivalent to the following:

\begin{equation}\begin{aligned}\label{gap-ineq}&a_{j}^{2}((\lambda_{k+2}-\lambda_{i})+(\lambda_{k+1}-\lambda_{i}))
-2b_{j}\sqrt{(\lambda_{k+2}-\lambda_{i})(\lambda_{k+1}-\lambda_{i})}\\&\geq\frac{a_{j}^{2}+b_{j}}{2}
\left(\sqrt{\lambda_{k+2}-\lambda_{i}}-\sqrt{\lambda_{k+1}-\lambda_{i}}\right)^{2}.\end{aligned}\end{equation}
By \eqref{gap-ineq} and \eqref{general-formula-1}, we have
\begin{equation*}\begin{aligned}\frac{a_{j}^{2}+b_{j}}{2}
\left(\sqrt{\lambda_{k+2}-\lambda_{i}}-\sqrt{\lambda_{k+1}-\lambda_{i}}\right)^{2}
&\leq\|2\langle\nabla h_{j},\nabla u_{i}\rangle+u_{i}\Delta h_{j}\|^{2}.
\end{aligned}\end{equation*}
Taking sum over $j$ from $1$ to $l$, we yield

\begin{equation}\begin{aligned}\label{gen-for-4}\sum_{j=1}^{l}\frac{a_{j}^{2}+b_{j}}{2}\left(\sqrt{\lambda_{k+2}-\lambda_{i}}-\sqrt{\lambda_{k+1}-\lambda_{i}}\right)^{2} \leq\sum_{j=1}^{l}\|2\langle\nabla h_{j},\nabla u_{i}\rangle+u_{i}\Delta h_{j}\|^{2}.
\end{aligned}\end{equation}
Multiplying \eqref{gen-for-4} by
$\left(\sqrt{\lambda_{k+2}-\lambda_{i}}+\sqrt{\lambda_{k+1}-\lambda_{i}}\right)^{2}$ on both sides, one can infer that

\begin{equation*}\begin{aligned}\sum_{j=1}^{l}\frac{a_{j}^{2}+b_{j}}{2}\left(\lambda_{k+2}-\lambda_{k+1}\right)^{2}
&\leq\sum_{j=1}^{l}\|2\langle\nabla h_{j},\nabla u_{i}\rangle+u_{i}\Delta h_{j}\|^{2}\left(\sqrt{\lambda_{k+2}-\lambda_{i}}+\sqrt{\lambda_{k+1}-\lambda_{i}}\right)^{2}\\
&=\sum_{j=1}^{l}\|2\langle\nabla h_{j},\nabla u_{i}\rangle+u_{i}\Delta h_{j}\|^{2}\\&\times\left(\sqrt{(\lambda_{k+2}+\rho)-(\lambda_{i}+\rho)}+\sqrt{(\lambda_{k+1}+\rho)-(\lambda_{i}+\rho)}\right)^{2}\\
&\leq4(\lambda_{k+2}+\rho)\sum_{j=1}^{l}\|2\langle\nabla h_{j},\nabla u_{i}\rangle+u_{i}\Delta h_{j}\|^{2}
.
\end{aligned}\end{equation*}
which is the inequality \eqref{general-formula-2}.
Therefore, we finish the proof of this lemma.

\end{proof}

\begin{rem}

Recall that, under the assumption that $\|\nabla h\|=1$, by utilizing \eqref{general-formula-1}, Chen, Zheng and Yang \cite{CZY} obtained

\begin{equation}\label{gene-form-3}\left(\lambda_{k+2}-\lambda_{k+1}\right)^{2}
\leq4\lambda_{k+2}\sum_{j=1}^{l}\|2\langle\nabla h_{j},\nabla u_{i}\rangle+u_{i}\Delta h_{j}\|^{2},\end{equation} which plays a significant  role in estimating the gap of $\lambda_{k+1}-\lambda_{k}$.  If there is no this assumption, it will encounter great difficulties of computing or estimating the detailed value of the term $|\|\nabla h\|^{2}u_{i}|$ and thus to obtain \eqref{gene-form-3} even if $h$ is a standard coordinate function on Euclidean space. However, we notice that the assumption that $|\nabla h|=1$ can be replaced by the assumption that the trial function $h$ satisfies the following condition: \begin{equation*}\|\nabla hu_{i}\|^{2}\geq\sqrt{\||\nabla h|^{2}u_{i}\|^{2}}.\end{equation*} Under the assumption, we can obtain the inequality \eqref{general-formula-2}, which plays a significant role in estimating the gap of eigenvalues of Laplacian on general Riemannian manifolds.

\end{rem}

By the same method as the proof of lemma \ref{lem2.3}, we can prove the following lemma if one notices to count the number of eigenvalues from $0$.

\begin{lem}\label{lem2.4}Let $\rho $ be a constant such that, for any $i=0,1,2,\cdots,k,$ $\overline{\lambda}_{i}+\rho>0$.
Under the assumption of the lemma \ref{lem2.2}, for any $j=0,1,2,\cdots,l,$ and any real value function $h_{j}\in C^{2}(M^{n})$,  we have

\begin{equation}\begin{aligned}\label{general-formula-4}\sum_{j=1}^{l}\frac{a_{j}^{2}+b_{j}}{2}\left(\overline{\lambda}_{k+2}-\overline{\lambda}_{k+1}\right)^{2}
\leq4(\overline{\lambda}_{k+2}+\rho)\sum_{j=1}^{l}\|2\langle\nabla h_{j},\nabla u_{i}\rangle+u_{i}\Delta h_{j}\|^{2}
,
\end{aligned}\end{equation}
where

\begin{equation*}a_{j}=\sqrt{\|\nabla h_{j}u_{i}\|^{2}},\end{equation*}

\begin{equation*}b_{j}=\sqrt{\||\nabla h_{j}|^{2}u_{i}\|^{2}},\end{equation*}
\begin{equation}a_{j}^{2}\geq b_{j},\end{equation}  and \begin{equation*}
\|h(x)\|=\int_{\Omega}h(x)dv.
\end{equation*}

\end{lem}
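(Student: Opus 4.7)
The plan is to mimic the proof of Lemma \ref{lem2.3} essentially verbatim, since the only structural change between the Dirichlet version (Lemma \ref{lem2.1}) and the closed-manifold version (Lemma \ref{lem2.2}) is that eigenvalues are indexed from $0$ rather than from $1$; the algebraic inequality (\ref{general-formula-1}) itself takes the same form. Thus I would invoke Lemma \ref{lem2.2} as the starting input and carry out exactly the same chain of manipulations that produced (\ref{general-formula-2}).

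The first step is to note that the hypothesis $a_j^2 \geq b_j$ together with $\overline{\lambda}_{k+2}-\overline{\lambda}_i \geq 0$ and $\overline{\lambda}_{k+1}-\overline{\lambda}_i \geq 0$ (valid for $k \geq i$) gives trivially
\begin{equation*}
\frac{a_j^2 - b_j}{2}\bigl(\sqrt{\overline{\lambda}_{k+2}-\overline{\lambda}_i}+\sqrt{\overline{\lambda}_{k+1}-\overline{\lambda}_i}\bigr)^2 \geq 0.
\end{equation*}
Expanding this square and regrouping terms yields the key algebraic identity
\begin{equation*}
a_j^2\bigl((\overline{\lambda}_{k+2}-\overline{\lambda}_i)+(\overline{\lambda}_{k+1}-\overline{\lambda}_i)\bigr) - 2b_j\sqrt{(\overline{\lambda}_{k+2}-\overline{\lambda}_i)(\overline{\lambda}_{k+1}-\overline{\lambda}_i)} \geq \frac{a_j^2+b_j}{2}\bigl(\sqrt{\overline{\lambda}_{k+2}-\overline{\lambda}_i}-\sqrt{\overline{\lambda}_{k+1}-\overline{\lambda}_i}\bigr)^2,
\end{equation*}
which combined with the closed-manifold general formula (\ref{general-formula-1}) of Lemma \ref{lem2.2} collapses to
\begin{equation*}
\frac{a_j^2+b_j}{2}\bigl(\sqrt{\overline{\lambda}_{k+2}-\overline{\lambda}_i}-\sqrt{\overline{\lambda}_{k+1}-\overline{\lambda}_i}\bigr)^2 \leq \|2\langle \nabla h_j, \nabla u_i\rangle + u_i \Delta h_j\|^2.
\end{equation*}

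The second step is to sum this inequality over $j$ from $1$ to $l$ and then multiply both sides by $(\sqrt{\overline{\lambda}_{k+2}-\overline{\lambda}_i}+\sqrt{\overline{\lambda}_{k+1}-\overline{\lambda}_i})^2$. The left-hand side telescopes into $(\overline{\lambda}_{k+2}-\overline{\lambda}_{k+1})^2$ via the difference-of-squares identity, while on the right-hand side I use the shift $\overline{\lambda}_{k+2}-\overline{\lambda}_i = (\overline{\lambda}_{k+2}+\rho)-(\overline{\lambda}_i+\rho)$ together with the positivity hypothesis $\overline{\lambda}_i+\rho>0$ to estimate
\begin{equation*}
\bigl(\sqrt{\overline{\lambda}_{k+2}-\overline{\lambda}_i}+\sqrt{\overline{\lambda}_{k+1}-\overline{\lambda}_i}\bigr)^2 \leq 4(\overline{\lambda}_{k+2}+\rho),
\end{equation*}
which yields precisely (\ref{general-formula-4}).

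The only potential obstacle is a purely cosmetic one: verifying that when the index $i$ is allowed to take the value $0$ (the constant eigenfunction $u_0$), all manipulations still make sense. This is automatic because Lemma \ref{lem2.2} is itself stated for $k > i \geq 0$, so the same input inequality (\ref{general-formula-1}) is available with $\overline{\lambda}_i$ in place of $\lambda_i$, and no step in the algebraic argument uses $\lambda_1>0$ in an essential way — only the sign conditions $\overline{\lambda}_{k+2}-\overline{\lambda}_i \geq 0$ and $\overline{\lambda}_i+\rho>0$, both of which are supplied by the hypotheses. Hence the argument transcribes line-for-line from Lemma \ref{lem2.3}, and as the author notes, the proof can be safely omitted.
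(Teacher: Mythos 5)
Your proposal is correct and follows essentially the same route as the paper: the paper itself omits the proof of Lemma \ref{lem2.4}, remarking only that it is obtained from the proof of Lemma \ref{lem2.3} by counting eigenvalues from $0$, and your transcription — starting from (\ref{general-formula-1}) of Lemma \ref{lem2.2}, regrouping via the nonnegativity of $\frac{a_j^2-b_j}{2}\bigl(\sqrt{\overline{\lambda}_{k+2}-\overline{\lambda}_i}+\sqrt{\overline{\lambda}_{k+1}-\overline{\lambda}_i}\bigr)^2$, summing over $j$, and multiplying by $\bigl(\sqrt{\overline{\lambda}_{k+2}-\overline{\lambda}_i}+\sqrt{\overline{\lambda}_{k+1}-\overline{\lambda}_i}\bigr)^2\leq 4(\overline{\lambda}_{k+2}+\rho)$ — is exactly that argument. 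Your check that the index $i=0$ causes no difficulty matches the paper's intent.
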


\section{Proofs of theorem \ref{thm1.1} and theorem \ref{thm1.2}}\label{sec3}
\vskip 3mm

In this section, we would like to give the proofs of theorem \ref{thm1.1} and theorem \ref{thm1.2}. Firstly, we need the
following lemma which can be
found in \cite{CC}.

\begin{lem}\label{lem3.1}
For  an $n$-dimensional  submanifold  $M^{n}$ in Euclidean space
$\mathbb{R}^{n+p}$,   let $y=(y^{1},y^{2},\cdots,y^{n+p})$ is the
position vector of a point $p\in M^{n}$ with
$y^{\alpha}=y^{\alpha}(x_{1}, \cdots, x_{n})$, $1\leq \alpha\leq
n+p$, where $(x_{1}, \cdots, x_{n})$ denotes a local coordinate
system of $M^n$. Then, we have
\begin{equation*}
\sum^{n+p}_{\alpha=1}g(\nabla y^{\alpha},\nabla y^{\alpha})= n,
\end{equation*}
\begin{equation*}
\begin{aligned}
\sum^{n+p}_{\alpha=1}g(\nabla y^{\alpha},\nabla u)g(\nabla
y^{\alpha},\nabla w)=g(\nabla u,\nabla w),
\end{aligned}
\end{equation*}
for any functions  $u, w\in C^{1}(M^{n})$,
\begin{equation*}
\begin{aligned}
\sum^{n+p}_{\alpha=1}(\Delta y^{\alpha})^{2}=n^{2}H^{2},
\end{aligned}
\end{equation*}
\begin{equation*}
\begin{aligned}
\sum^{n+p}_{\alpha=1}\Delta y^{\alpha}\nabla y^{\alpha}= 0,
\end{aligned}
\end{equation*}
where $H$ is the mean curvature of $M^{n}$.
\end{lem}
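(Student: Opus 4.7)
The plan is to regard the immersion $y:M^n\hookrightarrow\mathbb{R}^{n+p}$ as a collection of scalar functions $y^{1},\dots,y^{n+p}\in C^{\infty}(M^n)$, and to translate each of the four asserted identities into a purely coordinate computation that uses two ingredients: (i) the characterization of the induced metric, $g_{ij}=\sum_{\alpha}\partial_{i}y^{\alpha}\,\partial_{j}y^{\alpha}$, which is just the statement that the immersion is isometric; and (ii) Beltrami's formula $\Delta y=nH$, i.e.\ the Laplace--Beltrami operator applied componentwise to the position vector yields $n$ times the mean curvature vector of $M^n$ in $\mathbb{R}^{n+p}$.

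First I would prove the \emph{bilinear} identity (2). In local coordinates $g(\nabla y^{\alpha},\nabla u)=g^{ij}\partial_{i}y^{\alpha}\,\partial_{j}u$, so
\begin{equation*}
\sum_{\alpha=1}^{n+p}g(\nabla y^{\alpha},\nabla u)\,g(\nabla y^{\alpha},\nabla w)
=g^{ij}g^{kl}\partial_{j}u\,\partial_{l}w\sum_{\alpha=1}^{n+p}\partial_{i}y^{\alpha}\,\partial_{k}y^{\alpha}
=g^{ij}g^{kl}g_{ik}\partial_{j}u\,\partial_{l}w.
\end{equation*}
Contracting $g^{ij}g_{ik}=\delta^{j}_{\,k}$ reduces this to $g^{jl}\partial_{j}u\,\partial_{l}w=g(\nabla u,\nabla w)$, which is (2). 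The first identity follows immediately from (2) by specializing $u=w=y^{\alpha}$ and summing: $\sum_{\alpha}g(\nabla y^{\alpha},\nabla y^{\alpha})=g^{ij}g_{ij}=n$.

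Next I would handle (3) and (4) via Beltrami. Writing $\Delta y=(\Delta y^{1},\dots,\Delta y^{n+p})=nH$, the squared Euclidean norm gives
\begin{equation*}
\sum_{\alpha=1}^{n+p}(\Delta y^{\alpha})^{2}=|\Delta y|^{2}=n^{2}|H|^{2},
\end{equation*}
which is (3). For (4), observe that the vector field $\sum_{\alpha}\Delta y^{\alpha}\,\nabla y^{\alpha}$ is tangent to $M^n$, and for any tangent vector field $X$ one has $X(y^{\alpha})=g(\nabla y^{\alpha},X)$ so that $\sum_{\alpha}X(y^{\alpha})e_{\alpha}$ is precisely the realization of $X$ in $\mathbb{R}^{n+p}$. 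Therefore
\begin{equation*}
g\!\Bigl(\sum_{\alpha=1}^{n+p}\Delta y^{\alpha}\,\nabla y^{\alpha},\,X\Bigr)
=\sum_{\alpha=1}^{n+p}\Delta y^{\alpha}\,X(y^{\alpha})
=\langle \Delta y,X\rangle_{\mathbb{R}^{n+p}}=n\langle H,X\rangle=0,
\end{equation*}
since $H$ is normal to $M^n$. As $X$ is arbitrary, identity (4) follows.

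The computation is essentially routine; the only mild pitfall is keeping the two metric contractions straight in the derivation of (2) and being careful that Beltrami's formula $\Delta y=nH$ holds componentwise with respect to the fixed Euclidean frame (so one is entitled to read off $\Delta y^{\alpha}$ as Euclidean components of the mean curvature vector). Once these bookkeeping points are in place, all four identities drop out in a few lines, and no deeper submanifold geometry is invoked beyond the definition of the induced metric and Beltrami's formula.
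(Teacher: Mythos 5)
Your proof is correct and constitutes the standard argument; the paper itself does not prove this lemma but simply cites Chen--Cheng \cite{CC} for it, and the cited-reference proof is exactly the one you reconstruct: the first two identities from the isometric-immersion relation $g_{ij}=\sum_{\alpha}\partial_{i}y^{\alpha}\,\partial_{j}y^{\alpha}$ plus index contraction, and the last two from Beltrami's formula $\Delta y = nH$ together with the fact that the mean curvature vector is normal. One tiny phrasing quibble: you say identity (1) "follows from (2) by specializing $u=w=y^{\alpha}$ and summing", but the index $\alpha$ in (2) is already bound by the sum, so that substitution is not literally available; of course the direct computation $\sum_{\alpha}g(\nabla y^{\alpha},\nabla y^{\alpha})=g^{ij}g_{ij}=n$ that you actually write out is correct and is simply the $k=j$, one-derivative version of the same contraction, so there is no gap.
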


\vskip 2mm

\textbf{\emph{Proof of theorem}} \ref{thm1.1}. Let $a_{1},a_{2},\cdots,a_{n+p}$ are $(n+p)$ positive number. We define $(n+p)$
scarling coordinate functions $h_{j}(x)=\alpha_{j}x^{j}$, such that

\begin{equation}\label{a-b-in}a_{j}^{2}=\|\nabla h_{j}u_{i}\|^{2}\geq\sqrt{\||\nabla h_{j}|^{2}u_{i}\|^{2}}=b_{j}\geq0,\end{equation} and
\begin{equation}\begin{aligned} \label{sum-3.2}\sum_{j=1}^{n+p}\int2 u_{i}\langle\nabla h_{j},\nabla u_{i}\rangle\Delta  h_{j}dv&=0,\end{aligned}\end{equation}where   $j=1,2,\cdots,n+p,$ and $x^{j}$ denotes the $j$-th standard coordinate function of the Euclidean space $\mathbb{R}^{n+p}$.
Let $$\alpha=\min_{1\leq j\leq n+p}\{\alpha_{j}\},$$$$\overline{\alpha}=\max_{1\leq j\leq n+p}\{\alpha_{j}\},$$$$\beta=\min_{1\leq j\leq n+p}\{b_{j}\},$$
and $l=n+p$, then, by lemma \ref{lem2.1}, we have

\begin{equation}\begin{aligned}\label{left-1}\sum_{j=1}^{l}\frac{a_{j}^{2}+b_{j}}{2}&=\sum_{j=1}^{n+p}\frac{a_{j}^{2}+b_{j}}{2}\\&\geq\frac{1}{2}\left(n\alpha^{2}+
\sum_{j=1}^{n+p}b_{j}\right)\\&\geq\frac{1}{2}\left(n\alpha^{2}+(n+p)\beta\right),\end{aligned}\end{equation}

\begin{equation}\label{nH}\sum^{n+p}_{j=1}(\Delta h_{j})^{2}\leq\overline{\alpha}^{2}n^{2}H^{2},\end{equation} and

\begin{equation}\label{h-u}\sum_{j=1}^{n+p}\int_{\Omega}\langle\nabla h_{j},\nabla u_{i}\rangle^{2}dv\leq\overline{\alpha}^{2}\sum_{j=1}^{n+p}\int_{\Omega}\langle\nabla x^{j},\nabla u_{i}\rangle^{2}dv=\overline{\alpha}^{2}\lambda_{i}.\end{equation}
Since eigenvalues are invariant under isometries, defining
$$
c=\frac{1}{4}\inf_{\psi\in \Psi}\max_{\Omega}n^{2}H^{2}>0,
$$
where $\Psi$ denotes the set of all isometric immersions from $M^n$
into a Euclidean space, by lemma \ref{lem3.1}, \eqref{sum-3.2}, \eqref{nH}, and \eqref{h-u}, we have

\begin{equation}\begin{aligned}\label{right-1}4(\lambda_{k+2}+c)\sum_{j=1}^{n+p}\|2\langle\nabla h_{j},\nabla u_{i}\rangle
+u_{i}\Delta h_{j}\|^{2}&\leq4(\lambda_{k+2}+c)\overline{\alpha}^{2}\left(4\lambda_{i}+\int_{\Omega}u^{2}_{i}n^{2}H^{2}dv\right)
\\&\leq16\lambda_{k+2}\overline{\alpha}^{2}\left(\lambda_{i}+c\right).\end{aligned}\end{equation}
Let $i=1,\rho=c$, then, substituting  \eqref{left-1} and \eqref{right-1} into \eqref{general-formula-2}, we
have

\begin{equation}\begin{aligned}\label{gap1}\left(\lambda_{k+2}-\lambda_{k+1}\right)^{2}
\leq\frac{32\overline{\alpha}^{2}(\lambda_{k+2}+c)}{n\alpha^{2}+(n+p)\beta}\left(\lambda_{1}+c\right)
,
\end{aligned}\end{equation}
Therefore, we deduce from \eqref{gap1} that,
\begin{equation*}\begin{aligned}\lambda_{k+2}-\lambda_{k+1}&\leq
\sqrt{\frac{32\overline{\alpha}^{2}}{n\alpha^{2}+(n+p)\beta}}\sqrt{\lambda_{1}+c}\sqrt{\lambda_{k+2}+c}\\&\leq
(\lambda_{1}+c)\sqrt{\frac{32\overline{\alpha}^{2}C_{0}(n)}{n\alpha^{2}+(n+p)\beta}}(k+1)^{\frac{1}{n}}
\\&=C_{n,\Omega}(k+1)^{\frac{1}{n}},\end{aligned}
\end{equation*}
where $$C_{n,\Omega}=(\lambda_{1}+c)\sqrt{\frac{32\overline{\alpha}^{2}C_{0}(n)}{n\alpha^{2}+(n+p)\beta}}.$$
Therefore, we complete the proof of theorem \ref{thm1.1}.

$$\eqno\Box$$
\begin{rem}In the theorem \ref{thm1.1}, one can obtain an even stronger result. Indeed, in the proof of this theorem,
there exist a positive integer $1\leq j_{0}\leq n+p$ such that we can choose $n+p$ positive numbers  $\alpha_{1},\alpha_{2},\cdots,\alpha_{n+p}$ satisfy the following:
\begin{equation*}a_{j}^{2}=\|\nabla h_{j}u_{i}\|^{2}=\sqrt{\||\nabla h_{j}|^{2}u_{i}\|^{2}}=b_{j}\geq0,~where~j=1,2,\cdots,j_{0}-1,j_{0}+1,\cdots,n+p, \end{equation*}
\begin{equation*}a_{j_{0}}^{2}=\|\nabla h_{j_{0}}u_{i}\|^{2}\leq\sqrt{\||\nabla h_{j_{0}}|^{2}u_{i}\|^{2}}=b_{j_{0}}\geq0, \end{equation*} and
\begin{equation*}\begin{aligned} \sum_{j=1}^{n+p}\int2 u_{i}\langle\nabla h_{j},\nabla u_{i}\rangle\Delta  h_{j}dv&=0.\end{aligned}\end{equation*}\end{rem}
\begin{corr}
Assume that $(M^{n},g)$ is  an $n$-dimensional complete Riemannian manifolds, which is isometrically
immersed into $(n+\overline{p})$-dimensional Euclidean space $\mathbb{R}^{n+\overline{p}}$. Let $\lambda_{i}$ be the $i$-th $(i=1,2,\cdots,k)$
eigenvalue of the Dirichlet problem \eqref{Eigenvalue-Problem}. Then we have

\begin{equation}\label{z1}\begin{aligned}\lambda_{k+1}-\lambda_{k}\leq
C_{n,\Omega}k^{\frac{1}{n}},\end{aligned}
\end{equation}
where $$C_{n,\Omega}=(\lambda_{1}+c)\sqrt{\frac{32\overline{\alpha}^{2}C_{0}(n)}{n\alpha^{2}+(n+p)\beta}},$$  $C_{0}(n)$ is the same as the one in \eqref{Cheng-Yang-ineq}, and $$
c=\frac{1}{4}\inf_{\psi\in \Psi}\max_{\Omega}n^{2}H^{2}>0,
$$
where $\Psi$ denotes the set of all isometric immersions from $M^n$
into a Euclidean space $\mathbb{R}^{n+\overline{p}}$.
Furthermore,   assume that  $(M^{n},g)$ is an $n$-dimensional complete minimal submanifold which is isometrically
immersed into $(n+p)$-dimensional Euclidean space $\mathbb{R}^{n+p}$, and then the constant $c$ is given by $c=0$.
\end{corr}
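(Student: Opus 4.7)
The corollary is essentially a packaged version of Theorem \ref{thm1.1} with the explicit value of $C_{n,\Omega}$ read off, so my plan is to retrace the key ingredients of that proof rather than introduce new machinery. Specifically, using the position vector $y=(y^{1},\ldots,y^{n+\overline{p}})$ of the isometric immersion, I would define trial functions $h_{j}(x)=\alpha_{j}y^{j}$ for $j=1,\ldots,n+\overline{p}$, where the positive scalars $\alpha_{j}$ are to be chosen so that (i) $a_{j}^{2}=\|\nabla h_{j}u_{1}\|^{2}\geq\sqrt{\||\nabla h_{j}|^{2}u_{1}\|^{2}}=b_{j}$ for each $j$, and (ii) the cross term $\sum_{j}\int 2u_{1}\langle\nabla h_{j},\nabla u_{1}\rangle\Delta h_{j}\,dv$ vanishes. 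Establishing that such a choice exists is the step that requires the most care, but it is exactly the issue resolved in the proof of Theorem \ref{thm1.1}, and I would simply invoke that reasoning.

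Once the $h_{j}$ are in place, I would apply Lemma \ref{lem2.3} with $i=1$, $l=n+\overline{p}$, and $\rho=c$. The three identities in Lemma \ref{lem3.1} give clean bounds on each side: for the right hand side, $\sum_{j}(\Delta h_{j})^{2}\leq\overline{\alpha}^{2}n^{2}H^{2}$ together with $\sum_{j}\langle\nabla h_{j},\nabla u_{1}\rangle^{2}\leq\overline{\alpha}^{2}\lambda_{1}$ produces
\[
4(\lambda_{k+2}+c)\sum_{j}\|2\langle\nabla h_{j},\nabla u_{1}\rangle+u_{1}\Delta h_{j}\|^{2}\leq 16\,\overline{\alpha}^{2}\,\lambda_{k+2}\,(\lambda_{1}+c),
\]
after absorbing $\int u_{1}^{2}n^{2}H^{2}\,dv$ into $4c$; for the left hand side, $\sum_{j}\tfrac{a_{j}^{2}+b_{j}}{2}\geq\tfrac{1}{2}(n\alpha^{2}+(n+\overline{p})\beta)$. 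Plugging these into \eqref{general-formula-2} yields
\[
(\lambda_{k+2}-\lambda_{k+1})^{2}\leq\frac{32\,\overline{\alpha}^{2}(\lambda_{k+2}+c)}{n\alpha^{2}+(n+\overline{p})\beta}(\lambda_{1}+c).
\]

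To finish, I would feed in the Cheng--Yang bound \eqref{Cheng-Yang-ineq}, which in the present setting gives $\lambda_{k+2}+c\leq C_{0}(n)(k+1)^{2/n}(\lambda_{1}+c)$ (after absorbing the curvature term into $c$, as in \cite{CC}). Taking square roots then produces
\[
\lambda_{k+2}-\lambda_{k+1}\leq (\lambda_{1}+c)\sqrt{\frac{32\,\overline{\alpha}^{2}C_{0}(n)}{n\alpha^{2}+(n+\overline{p})\beta}}\,(k+1)^{1/n},
\]
which is the claimed inequality with the stated constant $C_{n,\Omega}$ after re-indexing. The minimal case is immediate: $H\equiv 0$ on $M^{n}$ forces $\max_{\Omega}n^{2}H^{2}=0$ and hence $c=0$, collapsing the constant to the simpler form.

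The only genuinely delicate step is the simultaneous verification of $a_{j}^{2}\geq b_{j}$ for every $j$ and the orthogonality identity $\sum_{j}\int 2u_{1}\langle\nabla h_{j},\nabla u_{1}\rangle\Delta h_{j}\,dv=0$; everything else is a bookkeeping exercise in Lemma \ref{lem3.1}, Lemma \ref{lem2.3}, and the recursion bound \eqref{Cheng-Yang-ineq}. I anticipate this linear/geometric selection of the $\alpha_{j}$'s — which must be done without destroying the positivity of $\alpha=\min\alpha_{j}$ and without blowing up $\overline{\alpha}=\max\alpha_{j}$ — to be the main obstacle, and I would handle it by the same argument that underlies the proof of Theorem \ref{thm1.1}.
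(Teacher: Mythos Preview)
Your proposal is correct and follows essentially the same approach as the paper: the corollary is simply Theorem \ref{thm1.1} with the constant $C_{n,\Omega}$ made explicit, and you have faithfully retraced the paper's proof of that theorem (trial functions $h_{j}=\alpha_{j}y^{j}$, Lemma \ref{lem2.3} with $i=1$ and $\rho=c$, the identities of Lemma \ref{lem3.1}, and the Chen--Cheng/Cheng--Yang bound \eqref{cc2-ineq}). Your caveat about the selection of the $\alpha_{j}$'s is apt: the paper itself simply asserts that such scalars exist without supplying a detailed argument, so invoking that same reasoning is exactly what the paper does.
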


\begin{rem}We shall note that when $p$ tends to infinity, it dose not mean that the constant $C_{n,\Omega}$ will be asymptotic to zero. This is because, for any $j=1,2,\cdots,n+p,$ we have

\begin{equation*}a_{j}^{2}=\|\nabla h_{j}u_{i}\|^{2}\geq\sqrt{\||\nabla h_{j}|^{2}u_{i}\|^{2}}=b_{j}\geq0,\end{equation*} which implies that $(n+1)\rho\leq n\alpha^{2}$.  \end{rem}

\begin{rem}Usually, we choose the standard coordinate functions to construct the trial functions to obtain the universal inequalities
or the estimate for the bounds of eigenvalues. However, we do not choose standard coordinate functions but the scarling coordinate functions which satisfy some conditions  to construct the trial functions in the proof of theorem \ref{thm1.1}.\end{rem}
From the proof of theorem \ref{thm1.1}, we have

\begin{rem}If $M^{n}$ is an $n$-dimensional Euclidean space, then we have $H=0$, and thus $c=0$. Let $\alpha_{j}=1$, where $j=1,2,\cdots,n+p$, then $h_{j}=x^{j}$. Thus, we have

$$\alpha=1,$$ and

$$\sum_{j=1}^{n+p}b_{j}=n,$$which implies that

$$C_{n,\Omega}=(\lambda_{1}+c)\sqrt{\frac{32\overline{\alpha}^{2}C_{0}(n)}{n\alpha^{2}+\sum_{j=1}^{n+p}b_{j}}}
=4\lambda_{1}\sqrt{\frac{C_{0}(n)}{n}}.$$
Therefore, the eigenvalue inequality \eqref {z1} in theorem \ref{thm1.1} generalize the eigenvalue inequality \eqref{czy-1} given by Chen-Zheng-Yang in { \rm \cite{CZY}}.
\end{rem}

\textbf{\emph{Proof of theorem}} \ref{thm1.2}. By lemma \ref{lem2.2}, lemma \ref{lem2.4} and lemma \ref{lem3.1}, we can give the proof by using the same method as the proof
of theorem \ref{thm1.1}.
$$\eqno\Box$$
Similarly, we have the following:
\begin{corr}
Let $(M^{n},g)$ be an $n$-dimensional closed Riemannian manifold, which is isometrically immersed into
$(n+\overline{p})$-dimensional Euclidean space $\mathbb{R}^{n+\overline{p}}$, and $\overline{\lambda}_{i}$ be the $i$-th $(i=0,1,2,\cdots,k)$
eigenvalue of the closed eigenvalue problem \eqref{Eigen-Prob-closed}. Then, for any
$k=1,2,\cdots,$ there exist some constants $\alpha^{\prime},$ and $b^{\prime}_{j},j=1,2,\cdots,n+p$, such that

\begin{equation*}\begin{aligned}\overline{\lambda}_{k+1}-\overline{\lambda}_{k}\leq
\overline{C}_{n,\Omega}k^{\frac{1}{n}},\end{aligned}
\end{equation*}
where $$\overline{C}_{n,\Omega}=(\overline{\lambda}_{1}+\overline{c})\sqrt{\frac{32\overline{\alpha}^{\prime2}C_{0}(n)}{n\alpha^{\prime2}+\sum_{j=1}^{n+p}b^{\prime}_{j}}},$$
and $C_{0}(n)$ is the same as the one in \eqref{Cheng-Yang-ineq}, and $$
\overline{c}=\frac{1}{4}\inf_{\psi\in \Psi}\max_{M^{n}}n^{2}H^{2}>0,
$$
where $\Psi$ denotes the set of all isometric immersions from $M^n$
into a Euclidean space $\mathbb{R}^{n+\overline{p}}$.
Furthermore,   assume that  $(M^{n},g)$ is an $n$-dimensional closed minimal submanifold which is isometrically
immersed into $(n+\overline{p})$-dimensional Euclidean space $\mathbb{R}^{n+\overline{p}}$, and then,

\begin{equation}\begin{aligned}\overline{c}=0.\end{aligned}
\end{equation}

\end{corr}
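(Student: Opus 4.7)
The plan is to mimic the proof of Theorem \ref{thm1.1} almost verbatim, replacing the Dirichlet setup by the closed one: Lemma \ref{lem2.1} is swapped for Lemma \ref{lem2.2}, Lemma \ref{lem2.3} for Lemma \ref{lem2.4}, and the Cheng--Yang recursion bound gets its closed analogue. The only genuine care required is the choice of reference eigenfunction and of the scaling constants.

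First, let $y=(y^1,\dots,y^{n+\overline{p}})$ be the position vector of the isometric immersion $M^n\hookrightarrow\mathbb{R}^{n+\overline{p}}$, and define trial functions $h_j(x)=\alpha_j y^j$ for positive constants $\alpha_1,\dots,\alpha_{n+\overline{p}}$ to be chosen. I would apply Lemma \ref{lem2.4} with the reference index $i=1$ (not $i=0$, since $u_0$ is constant and $\nabla u_0\equiv 0$ kills every term in the inequality). The standing geometric identities from Lemma \ref{lem3.1} give
\[
\sum_{j=1}^{n+\overline{p}}|\nabla h_j|^2\le \overline{\alpha}^2 n,\qquad \sum_{j=1}^{n+\overline{p}}(\Delta h_j)^2\le \overline{\alpha}^2 n^2|H|^2,\qquad \sum_{j=1}^{n+\overline{p}}\Delta h_j\,\nabla h_j=0,
\]
where $\overline{\alpha}=\max_j\alpha_j$. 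The third identity, together with integration by parts (now with no boundary contribution since $M^n$ is closed), yields $\sum_j\int_{M^n}2u_1\langle\nabla h_j,\nabla u_1\rangle\Delta h_j\,dv=0$, exactly as in the Dirichlet case.

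Second, choose the $\alpha_j$ (using an averaging / rescaling argument on an index-by-index basis) so that
\[
a_j^{2}=\|\nabla h_j u_1\|^{2}\ge \sqrt{\|\,|\nabla h_j|^{2}u_1\|^{2}}=b_j,\qquad j=1,\dots,n+\overline{p},
\]
which is the hypothesis required in Lemma \ref{lem2.4}. Put $\alpha=\min_j\alpha_j$, $\beta=\min_j b_j$, and $\rho=\overline{c}=\tfrac14\inf_{\psi\in\Psi}\max_{M^n}n^2|H|^2$. Then Lemma \ref{lem2.4} combined with the three identities above produces
\[
\bigl(\overline{\lambda}_{k+2}-\overline{\lambda}_{k+1}\bigr)^{2}\le \frac{32\,\overline{\alpha}^{2}\,(\overline{\lambda}_{k+2}+\overline{c})(\overline{\lambda}_{1}+\overline{c})}{n\alpha^{2}+(n+\overline{p})\beta}.
\]

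Third, I would invoke the closed-manifold analogue of the Cheng--Yang recursion bound, namely $\overline{\lambda}_{k+2}+\overline{c}\le C_0(n)(k+1)^{2/n}(\overline{\lambda}_1+\overline{c})$, which is available for submanifolds of Euclidean space by the same argument as in \eqref{cc2-ineq}. Taking a square root gives
\[
\overline{\lambda}_{k+2}-\overline{\lambda}_{k+1}\le \overline{C}_{n,M^n}(k+1)^{1/n},\qquad \overline{C}_{n,M^n}=(\overline{\lambda}_1+\overline{c})\sqrt{\frac{32\,\overline{\alpha}^{2}C_0(n)}{n\alpha^{2}+(n+\overline{p})\beta}},
\]
which is exactly the stated conclusion after shifting $k\mapsto k-1$.

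The main obstacle is the simultaneous choice of the scaling constants $\alpha_j$. One must verify that the conditions $a_j^2\ge b_j$ for every $j$ and the vanishing cross-term $\sum_j\int 2u_1\langle\nabla h_j,\nabla u_1\rangle\Delta h_j\,dv=0$ are jointly achievable; the latter is automatic from $\sum_j\Delta h_j\nabla h_j=0$, but the former requires a delicate scaling-and-rotation argument since in the closed (compact) case the first eigenfunction $u_1$ cannot be assumed to vanish on any boundary and its Rayleigh quotient behaviour is more rigid. Up to this technical adjustment, every subsequent step is a direct translation of the Dirichlet argument.
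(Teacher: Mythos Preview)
Your proposal is essentially the paper's own argument: the paper proves Theorem~\ref{thm1.2} by stating ``By lemma \ref{lem2.2}, lemma \ref{lem2.4} and lemma \ref{lem3.1}, we can give the proof by using the same method as the proof of theorem \ref{thm1.1}'', and the corollary is then read off exactly as you describe.

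There is one slip worth flagging. You list $\sum_j\Delta h_j\,\nabla h_j=0$ among the ``standing geometric identities from Lemma \ref{lem3.1}'' and then say the vanishing of the cross term $\sum_j\int 2u_1\langle\nabla h_j,\nabla u_1\rangle\Delta h_j\,dv$ is automatic. Lemma \ref{lem3.1} gives $\sum_j\Delta y^j\,\nabla y^j=0$ for the \emph{unscaled} coordinates; after scaling by possibly distinct $\alpha_j$ one only has $\sum_j\alpha_j^{2}\,\Delta y^j\,\nabla y^j$, which need not vanish pointwise. In the paper's proof of Theorem~\ref{thm1.1} this vanishing is not derived from Lemma \ref{lem3.1} at all: it is imposed as condition~\eqref{sum-3.2}, to be satisfied by the choice of the $\alpha_j$ \emph{together} with the inequalities $a_j^2\ge b_j$. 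So the ``main obstacle'' you identify at the end is exactly right, but the two constraints are genuinely coupled --- neither one comes for free --- and the paper handles them as a package rather than claiming one is automatic.
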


\section{ Estimates for the Eigenvalues on the Unit Sphere and Cylinder}\label{sec4}
In this section, we investigate the eigenvalues on the $n$-dimensional unit sphere $\mathbb{S}^{n}(1)$ and cylinder $\mathbb{R}^{n-m}\times\mathbb{S}^{m}(1)$ with $m<n$.
However, when $n=m$, we assume that $\mathbb{R}^{n-m}\times\mathbb{S}^{m}(1)$ is exactly an $n$-dimensional unit sphere. Under those assumptions, we have
\begin{thm}\label{thm-product-soliton}\ \ Let $M^{n}$ be an $n$-dimensional unit sphere $\mathbb{S}^{n}(1)$ or
cylinder $\mathbb{R}^{n-m}\times\mathbb{S}^{m}(1)$, and $\lambda_{i}$ be the $i$-th $(i=1,2,\cdots,k)$
eigenvalue of the eigenvalue problem \eqref{Eigenvalue-Problem}. Then, we have

\begin{equation}\label{ineq-sp-sl}\begin{aligned}\lambda_{k+1}-\lambda_{k}\leq
C_{n,\Omega}k^{\frac{1}{n}},\end{aligned}
\end{equation}
where $$C_{n,\Omega}=4\left(\lambda_{1}+\frac{m^{2}+mn}{8}\right)\sqrt{\frac{2C_{0}(n)}{n\delta+(n+1)\gamma}}.$$ \end{thm}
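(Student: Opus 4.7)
The plan is to imitate the proof of Theorem \ref{thm1.1}, specialized to the canonical isometric embeddings $\mathbb{S}^n(1)\hookrightarrow\mathbb{R}^{n+1}$ and $\mathbb{R}^{n-m}\times\mathbb{S}^m(1)\hookrightarrow\mathbb{R}^{n-m}\times\mathbb{R}^{m+1}=\mathbb{R}^{n+1}$, and to invoke the Chen--Cheng recursion \eqref{cc2-ineq} with $n^2\|H\|^2=m^2$. I would take as trial functions the scaled ambient coordinates $h_j(x)=\alpha_j x^j$ for $j=1,\ldots,n+1$, with positive constants $\alpha_j$ to be tuned, and then appeal to Lemma \ref{lem2.3}.

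The key geometric input is supplied by Lemma \ref{lem3.1}:
\[
\sum_{j=1}^{n+1}|\nabla x^j|^2=n,\qquad \sum_{j=1}^{n+1}\langle\nabla x^j,\nabla u_i\rangle^2=|\nabla u_i|^2,\qquad \sum_{j=1}^{n+1}(\Delta x^j)^2=n^2|H|^2=m^2,
\]
where, for the sphere, $\Delta x^j=-nx^j$ for every $j$ (so $m=n$), while for the cylinder $\Delta x^j=0$ on the $n-m$ flat directions and $\Delta x^j=-m x^j$ on the $m+1$ spherical ones. Taking $\alpha_j$ constant along the spherical block, the cross term in $\sum_j\|2\langle\nabla h_j,\nabla u_i\rangle+u_i\Delta h_j\|^2$ reduces to
\[
-2m\int u_i\Bigl\langle\nabla\Bigl(\alpha^2\!\!\sum_{j\in\mathrm{sph}}(x^j)^2\Bigr),\nabla u_i\Bigr\rangle dv,
\]
which vanishes because $\sum_{j\in\mathrm{sph}}(x^j)^2\equiv1$ on $M^n$. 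After this cancellation,
\[
\sum_{j=1}^{n+1}\|2\langle\nabla h_j,\nabla u_i\rangle+u_i\Delta h_j\|^2\leq 4\bar\alpha^2\lambda_i+\bar\alpha^2 m^2.
\]
Applying Lemma \ref{lem2.3} with $i=1$ and a shift $\rho$ chosen so that $4\lambda_1+m^2\leq 4(\lambda_1+\rho)$, and invoking \eqref{cc2-ineq} in the form $\lambda_{k+2}+m^2/4\leq C_0(n)(k+1)^{2/n}(\lambda_1+m^2/4)$, I take square roots. The parameters $\delta$ and $\gamma$ in the denominator $n\delta+(n+1)\gamma$ arise as the lower bound for $\sum_j(a_j^2+b_j)/2$: the $n-m$ flat directions contribute $\delta=\min_j\alpha_j^2$ (since there $a_j^2=b_j=\alpha_j^2$), while the $m+1$ spherical directions contribute the minimum of the $b_j$-values, namely $\gamma$.

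The principal obstacle will be reconciling the hypothesis $a_j^2\geq b_j$ of Lemma \ref{lem2.3} with the spherical directions, where $|\nabla x^j|^2=1-(x^j)^2$ is non-constant; Cauchy--Schwarz then forces $a_j^2\leq b_j$ with equality only when $|\nabla h_j|$ is constant on the support of $u_i$. Overcoming this requires either splitting the summation according to flat versus spherical indices--applying Lemma \ref{lem2.3} to the flat directions and Lemma \ref{lem2.1} directly to the spherical ones, exploiting the $O(m+1)$-invariance of the round metric to symmetrize $|\nabla h_j|^2$--or invoking the relaxed condition indicated in the remark following Lemma \ref{lem2.3}. Tracking these contributions precisely against the Chen--Cheng shift $m^2/4$ is the delicate bookkeeping step that produces the specific final constant $(m^2+mn)/8$ rather than the cruder value $m^2/4$ one would obtain by directly plugging the mean curvature into the proof of Theorem \ref{thm1.1}.
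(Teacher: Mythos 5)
Your proposal follows essentially the same route as the paper's own proof: the same scaled ambient coordinate trial functions $h_{j}=\delta_{j}x^{j}$ for the standard embedding into $\mathbb{R}^{n+1}$, the same cancellation of the cross term via $\sum_{j=n-m+1}^{n+1}(x^{j})^{2}\equiv1$ (the paper's identity $\sum_{j}\nabla(x^{j})^{2}=0$), the same estimate $\sum_{j}\|2\langle\nabla h_{j},\nabla u_{i}\rangle+u_{i}\Delta h_{j}\|^{2}\leq4\overline{\delta}^{2}\lambda_{i}+m^{2}\overline{\delta}^{2}$, the same shift $\rho=m^{2}/4$, the same lower bound $\tfrac12\left(n\delta+(n+1)\gamma\right)$ fed into Lemma \ref{lem2.3}, and the same appeal to the Chen--Cheng bound \eqref{cc2-ineq}. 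The obstacle you flag --- that Cauchy--Schwarz forces $a_{j}^{2}\leq b_{j}$ on the spherical directions, so the hypothesis $a_{j}^{2}\geq b_{j}$ cannot be arranged by the scale-invariant choice of the $\delta_{j}$ --- is not resolved in the paper either (it simply postulates such $\delta_{j}$, and its proof in fact produces the constant with $c=m^{2}/4$ and an extra factor $\overline{\delta}$ rather than the stated $(m^{2}+mn)/8$), so your write-up is faithful to, and no weaker than, the published argument.
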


\begin{proof}  We denote the position vector of the $n$-dimensional unit round
cylinder $\mathbb{R}^{n-m}\times\mathbb{S}^{m}(1)$ in
$(n+1)$-dimensional Euclidean space $\mathbb{R}^{n+1}$ by $$\textbf{x}
=(\textbf{v},~\textbf{w})= (x^{1},
x^{2},\ldots,x^{n-m},x^{n-m+1},x^{n-m+2} \cdots,x^{n}, x^{n+1}),$$
where $\textbf{v}=(x^{1},
x^{2},\ldots,x^{n-m}),\textbf{w}=(x^{n-m+1},x^{n-m+2} \cdots,x^{n},
x^{n+1})$. In particular, when $n=m$, $\textbf{x}=\textbf{w}$. Then, we obtain
\begin{equation}\label{4.3.54}\sum^{n+1}_{j=n-m+1}(x^{j})^{2}=1,~ \sum^{n+1}_{j=1}|\nabla x^{j}|^{2}=n.\end{equation} It is not difficult to see that, when $n>m$,

\begin{equation}\label{4.3.55}\Delta x^{j}=
\left\{ \begin{aligned}
     &0, &\textnormal{if}\ \ j=1,\cdots,n-m,   \\
                  &-mx^{j},  &\textnormal{if}\ \ j=n-m+1,\cdots,n+1;
                          \end{aligned} \right.
                          \end{equation}and when $n=m$,

\begin{equation}\label{4.3}\Delta x^{j}=-nx^{j}, \textnormal{if}\ \ j=1,\cdots,n+1.\end{equation}
For any $j~(j=1,2,\cdots,n+1)$,   let $l=n+1$ and
$h_{j}(x)=\delta_{j}x^{j}$ and $\delta_{j}>0$, such that

\begin{equation}\begin{aligned}\label{sum-0} \sum_{j=1}^{n+1}\int2 u_{i}\langle\nabla (\delta_{j}x^{j}),\nabla u_{i}\rangle\Delta  (\delta_{j}x^{j})dv&=0,\end{aligned}\end{equation} and

\begin{equation*}a_{j}^{2}=\|\nabla h_{j}u_{i}\|^{2}\geq\sqrt{\||\nabla h_{j}|^{2}u_{i}\|^{2}}=b_{j}\geq0,\end{equation*}
Let $$\delta=\min_{1\leq j\leq n+p}\{\delta_{j}\},$$$$\overline{\delta}=\max_{1\leq j\leq n+p}\{\delta_{j}\},$$
$$\gamma=\min_{1\leq j\leq n+p}\min_{\Omega}\{b_{j}\}.$$
Then, we have

\begin{equation}\begin{aligned}\label{eq-a-b}\sum_{j=1}^{l}\frac{a_{j}^{2}+b_{j}}{2}&=\sum_{j=1}^{n+1}
\frac{\sqrt{\|\nabla (\delta_{j}x^{j})u_{i}\|^{2}}+\sqrt{\||\nabla (\delta_{j}x^{j})|^{2}u_{i}\|^{2}}}{2}\\&\geq\frac{1}{2}\left(n\delta+\sum^{n+1}_{j=1}b_{j}\right)
\\&\geq\frac{1}{2}\left(n\delta+(n+1)\gamma\right).
\end{aligned}\end{equation}
For any fixed point $x_{0}\in\Omega$, we can find a coordinate
system $(\widetilde{x}^{1},\widetilde{x}^{2},\cdots
\widetilde{x}^{n+1})$ of the $n$-dimensional unit round cylinder
$\mathbb{R}^{n-m}\times\mathbb{S}^{m}(1)$ such that at the point
$x_{0}$

\begin{equation}\label{4.3.58}\begin{aligned}&\widetilde{x}^{1}=\cdots=\widetilde{x}^{n}=0,~~\widetilde{x}^{n+1}=1,
\\&\nabla\widetilde{x}^{n+1}=0;~~\nabla_{p}x^{q}=\delta^{q}_{p}~(p,q=1,2,\cdots,n+1).
\end{aligned}\end{equation}
In fact, we can choose a constant $(n+1)\times(n+1)$ type
orthonormal matrix $a^{i}_{j}$
satisfying\begin{equation*}\begin{aligned}\sum^{n+1}_{\alpha=1}a^{\alpha}_{p}a^{\alpha}_{q}=\delta_{pq},
\end{aligned}\end{equation*}
such that
\begin{equation*}\begin{aligned}x^{p}=\sum^{n+1}_{\alpha=1}a^{p}_{\alpha}\widetilde{x}^{\alpha}
\end{aligned}\end{equation*}
and (\ref{4.3.58}) is satisfied at the point $x_{0}$.   By a direct computation, at the
point $x_{0}$,  we yield

\begin{equation*}\begin{aligned}\sum^{n+1}_{p=1}\langle\nabla
x^{p},\nabla
u_{i}\rangle^{2}=|\nabla
u_{i}|^{2}.\end{aligned}\end{equation*} Since $x_{0}$ is an
arbitrary point, we know that for any point $x\in\Omega$,
\begin{equation*}\begin{aligned}\sum^{n+1}_{p=1}\langle\nabla
x^{p},\nabla u_{i}\rangle^{2}=|\nabla
u_{i}|^{2}.\end{aligned}\end{equation*}
On the other hand, by using
(\ref{4.3.54}), we
have\begin{equation}\label{4.3.59}\begin{aligned}&\sum^{n+1}_{p=n-m+1}\nabla(x^{p})^{2}=0,\end{aligned}\end{equation}and
\begin{equation}\label{4.3.60}\begin{aligned}\sum^{n+1}_{p=n-m+1}|\nabla
x^{p}|^{2}=-\sum^{n+1}_{p=1}x^{p}\Delta x^{p}=m.
\end{aligned}\end{equation}
Let

\begin{equation}\begin{aligned}\label{a-ineq}\mathfrak{A}=\sum_{j=1}^{l}\|2\langle\nabla h_{j},\nabla u_{i}\rangle+u_{i}\Delta h_{j}\|^{2}
&=\sum_{j=1}^{n+1}\|2\langle\nabla (\delta_{j}x^{j}),\nabla u_{i}\rangle+u_{i}\Delta(\delta_{j}x^{j})\|^{2}\end{aligned}\end{equation}
Then, by making use of \eqref{4.3.55}, \eqref{sum-0}, \eqref{4.3.59}, \eqref{4.3.60} and \eqref{a-ineq}, we deduce

\begin{equation}\label{U-1}\begin{aligned}\mathfrak{A}&=\sum^{n+1}_{j=1} \|2\langle\nabla
(\delta_{j}x^{j}),\nabla u_{i}\rangle+u_{i}\Delta (\delta_{j}x^{j})\rangle\|_{\Omega}^{2}\\
&=4\sum^{n+1}_{j=1}\int_{\Omega}\langle\nabla
(\delta_{j}x^{j}),\nabla
u_{i}\rangle^{2}dv+m^{2}\sum^{n+1}_{j=n-m+1}\int_{\Omega}u_{i}^{2}(\delta_{j}x^{j})^{2}dv
,\end{aligned}\end{equation}
Furthermore, by \eqref{U-1}, we have

\begin{equation}\label{ineq-a-2}\begin{aligned}\mathfrak{A}
\leq4\overline{\delta}^{2}\sum^{n+1}_{j=1}\int_{\Omega}\langle\nabla
x^{j},\nabla
u_{i}\rangle^{2}d\mu+m^{2}\overline{\delta}^{2}\sum^{n+1}_{j=n-m+1}\int_{\Omega}u_{i}^{2}(x^{j})^{2}d\mu=4\overline{\delta}^{2}\lambda_{i}+m^{2}\overline{\delta}^{2}.\end{aligned}\end{equation}
Let $$c=\rho=\frac{m^{2}}{4}.$$Then, we deduce from \eqref{general-formula-2}, \eqref{eq-a-b} and \eqref{ineq-a-2} that,

\begin{equation}\begin{aligned}\label{general-rmula-3}\frac{1}{2}\left(n\delta+(n+1)\gamma\right)\left(\lambda_{k+2}-\lambda_{k+1}\right)^{2}
\leq4(\lambda_{k+2}+c)\left(4\overline{\delta}^{2}\lambda_{i}+m^{2}\overline{\delta}^{2}\right)
.
\end{aligned}\end{equation}Therefore, by utilizing \eqref{cc2-ineq} and \eqref{general-rmula-3}, we yield
\begin{equation*}\begin{aligned}\lambda_{k+2}-\lambda_{k+1}&\leq
4\overline{\delta}\sqrt{\frac{2}{n\delta+(n+1)\gamma}}\sqrt{\lambda_{1}+c}\sqrt{\lambda_{k+2}+c}\\&\leq
4\overline{\delta}\left(\lambda_{1}+c\right)\sqrt{\frac{2C_{0}(n)}{n\delta+(n+1)\gamma}}(k+1)^{\frac{1}{n}}
\\&=C_{n,\Omega}(k+1)^{\frac{1}{n}},\end{aligned}
\end{equation*}
where $$C_{n,\Omega}=4\overline{\delta}\left(\lambda_{1}+c\right)\sqrt{\frac{2C_{0}(n)}{n\delta+(n+1)\gamma}}.$$ Therefore, we complete the
proof of this theorem.
\end{proof}
\begin{rem} It is easy to see that inequality \eqref{ineq-sp-sl} is also  an intrinsic inequality.
In particular, when $n=m$, inequality \eqref{ineq-sp-sl} gives an intrinsic estimates for the gap of the consecutive eigenvalues on the
sphere space form $\mathbb{S}^{n}(1)$ with unit radius.\end{rem}

\section{Eigenvalues on Complex Projective Spaces}\label{sec5}

In this section, we investigate the eigenvalues of the eigenvalue problem of the Laplacian
on a connected bounded domain  and on a compact complex hypersurface without boundary in the standard
complex projective space $\mathbb{C}P^{n}(4)$ with holomorphic sectional curvature $4$. We shall give an explicit
gap estimate of the consecutive eigenvalues $\lambda_{k+1}-\lambda_{k}$. Firstly, we prove

\begin{thm}\label{thm5.1}Let $\Omega$ a connected bounded domain
in the standard complex projective space $\mathbb{C}P^{n}(4)$ with holomorphic sectional curvature $4$,
and $\lambda_{i}$ be the $i$-th $(i=1,2,\cdots,k)$
eigenvalue of the eigenvalue problem \eqref{Eigenvalue-Problem}. Then, we have

\begin{equation*}\begin{aligned}\lambda_{k+2}-\lambda_{k+1}\leq C(n,\Omega)k^{\frac{1}{2n}},
\end{aligned}\end{equation*} where

\begin{equation*}C(n,\Omega)=4\sqrt{\frac{ 4\left[\left(\sum^{n}_{s=1}\frac{\theta_{s}^{2}}{\theta_{n+1}^{2}} \right)^{2}
+\sum^{n}_{s=1}\frac{\theta_{s}^{4}}{\theta_{n+1}^{4}}\right]+\dfrac{2\overline{\theta}^{2}}{\theta^{2}_{n+1}}\lambda_{1}}
{2\sum^{n}_{s=1}\frac{\theta_{s}^{2}}{\theta_{n+1}^{2}}+\frac{1}{2}(n+1)^{2}\beta}}\cdot\sqrt{C_{0}(n)(\lambda_{1}+2n(n+1))},\end{equation*}
 and $C_{0}(n)$ is the same as the one in \eqref{Cheng-Yang-ineq}. \end{thm}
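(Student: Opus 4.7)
The plan is to parallel the proof of Theorem \ref{thm1.1}, replacing the Euclidean position vector by the coordinate functions of a canonical isometric embedding of $\mathbb{C}P^{n}(4)$ into a Euclidean space (for example the embedding $p\mapsto p\otimes p^{*}$ into the space of Hermitian matrices, which realizes $\mathbb{C}P^{n}(4)$ as a minimal submanifold of a round sphere, hence of a Euclidean space). This furnishes a finite family of real functions $x^{1},\dots,x^{N}$ on $\mathbb{C}P^{n}(4)$ obeying the counterparts of Lemma \ref{lem3.1}: $\sum_{s}|\nabla x^{s}|^{2}$ is constant, $\sum_{s}\langle\nabla x^{s},\nabla u\rangle\langle\nabla x^{s},\nabla w\rangle=\langle\nabla u,\nabla w\rangle$ for any smooth $u,w$, and each $x^{s}$ is a Laplace eigenfunction $\Delta x^{s}=-c_{s}x^{s}$ with the $c_{s}$ built from the first closed eigenvalue $2n(n+1)$ of $\mathbb{C}P^{n}(4)$. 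These identities play the role that Lemma \ref{lem3.1} played for Euclidean submanifolds.

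Following the Theorem \ref{thm1.1} strategy, I would introduce positive scalings $\theta_{1},\dots,\theta_{n+1}$, one for each natural block of coordinates in the embedding, and set the trial functions $h_{s}=\theta_{s}x^{s}$. The scalings are chosen so that both hypotheses of Lemma \ref{lem2.3} are met simultaneously:
\begin{equation*}
a_{s}^{2}=\|\nabla h_{s}\,u_{1}\|^{2}\geq\sqrt{\||\nabla h_{s}|^{2}u_{1}\|^{2}}=b_{s},\qquad \sum_{s}\int_{\Omega}2u_{1}\langle\nabla h_{s},\nabla u_{1}\rangle\Delta h_{s}\,dv=0.
\end{equation*}
The orthogonality condition is a single linear equation in the $(n+1)$ quantities $\theta_{s}^{2}$, so it admits a positive solution; the pointwise inequality is then ensured by an overall rescaling against the fiducial $\theta_{n+1}$. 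With these trial functions, the eigenfunction relation $\Delta h_{s}=-c_{s}h_{s}$ gives the expansion
\begin{equation*}
\sum_{s}\|2\langle\nabla h_{s},\nabla u_{1}\rangle+u_{1}\Delta h_{s}\|^{2}=4\sum_{s}\int_{\Omega}\langle\nabla h_{s},\nabla u_{1}\rangle^{2}dv+\sum_{s}c_{s}^{2}\int_{\Omega}h_{s}^{2}u_{1}^{2}dv,
\end{equation*}
and the Lemma \ref{lem3.1} analogue converts the first sum into a $\tfrac{2\overline{\theta}^{2}}{\theta_{n+1}^{2}}\lambda_{1}$ contribution while the second produces the quartic combinations $4[(\sum\theta_{s}^{2}/\theta_{n+1}^{2})^{2}+\sum\theta_{s}^{4}/\theta_{n+1}^{4}]$ that make up the numerator of $C(n,\Omega)$. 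On the lower-bound side, $\sum_{s}(a_{s}^{2}+b_{s})/2$ is controlled from below by $2\sum_{s=1}^{n}\theta_{s}^{2}/\theta_{n+1}^{2}+\tfrac{1}{2}(n+1)^{2}\beta$, matching the denominator.

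Inserting these estimates into \eqref{general-formula-2} with $i=1$ and $\rho=2n(n+1)$, and invoking the Cheng--Yang recursion applied to the universal inequality \eqref{cy-universal-1}, which yields $\lambda_{k+2}+2n(n+1)\leq C_{0}(n)(k+1)^{1/n}(\lambda_{1}+2n(n+1))$, produces a bound on $(\lambda_{k+2}-\lambda_{k+1})^{2}$ of the form $C(n,\Omega)^{2}(k+1)^{1/n}$; taking square roots gives exactly the $k^{1/(2n)}$ growth, consistent with Weyl's law for the real dimension $2n$ of $\mathbb{C}P^{n}(4)$. The main obstacle is the bookkeeping around the scalings: one must verify that the linear orthogonality constraint on the $\theta_{s}^{2}$ admits a positive solution (nondegeneracy against $u_{1}$), and that the combinations of $c_{s}^{2}(x^{s})^{2}$ produced by the embedding collapse into the specific algebraic form displayed in $C(n,\Omega)$. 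Once these structural identities and the solvability of the scaling conditions are confirmed, the rest is a direct combination of Lemma \ref{lem2.3} with the Cheng--Yang recursion.
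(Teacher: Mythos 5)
Your plan --- build trial functions from the Hermitian-matrix embedding $p\mapsto p\otimes p^*$ of $\mathbb{C}P^n(4)$, apply Lemma~\ref{lem2.3} with $i=1$ and $\rho=2n(n+1)$, then close with the Cheng--Yang recursion applied to \eqref{cy-universal-1} --- matches the paper's proof at the structural level, and you correctly identify both the embedding behind the functions $\Psi_{s\overline{t}}$ and the combined role of Lemma~\ref{lem2.3} and the recursion. The argument diverges, however, at the scaling step, which is precisely the step you flag as ``to be confirmed.'' You propose the linear rescalings $h_s=\theta_s x^s$, mimicking Theorem~\ref{thm1.1}. The paper instead uses the functions
\begin{equation*}
\Psi_{s\overline{t}}(\theta,\cdot)=\frac{(\theta_s Z^s)\overline{(\theta_t Z^t)}}{\sum_r(\theta_r Z^r)\overline{(\theta_r Z^r)}},
\end{equation*}
a projective rescaling whose denominator also carries the $\theta$'s. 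The nonlinearity is essential: it keeps $\sum_{s,t}\Psi_{s\overline{t}}\overline{\Psi_{s\overline{t}}}\equiv 1$ for every choice of $\theta$, and this sphere constraint, combined with the adapted unitary frame at each point $P$ and the pointwise Laplacian computation in the affine coordinates $y^s=\theta_{n+1}z^s/\theta_s$, produces the identities \eqref{sys-5}; in particular it is the source of
\begin{equation*}
\sum_{s,t}\left(\Delta\mathcal{G}_{s\overline{t}}\Delta\mathcal{G}_{s\overline{t}}+\Delta\mathcal{F}_{s\overline{t}}\Delta\mathcal{F}_{s\overline{t}}\right)
=16\left[\left(\sum_{s}\frac{\theta_s^2}{\theta_{n+1}^2}\right)^2+\sum_{s}\frac{\theta_s^4}{\theta_{n+1}^4}\right],
\end{equation*}
which is exactly what generates the ratio structure $\theta_s^2/\theta_{n+1}^2$, $\theta_s^4/\theta_{n+1}^4$ in the stated $C(n,\Omega)$. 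A linear rescaling of the embedding coordinates destroys the sphere constraint ($\sum_{s,t}|h_{s\overline{t}}|^2$ is no longer constant), and since the traceless embedding components all lie in the first eigenspace with one common eigenvalue, the analogous computation yields a term proportional to $\sum\theta_s^2\theta_t^2|\Psi^0_{s\overline{t}}|^2$, whose $\theta$-dependence is incompatible with the displayed formula. So the step where you posit that ``the combinations of $c_s^2(x^s)^2$ produced by the embedding collapse into the specific algebraic form displayed in $C(n,\Omega)$'' is exactly where the proposal breaks down; recovering the stated coefficient requires the paper's projective rescaling and the accompanying pointwise unitary-frame calculation, not the Euclidean-style linear scaling of Theorem~\ref{thm1.1}.
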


\begin{proof}
Let $Z=(Z^{1},Z^{2},\cdots,Z^{n+1})$ be a homogeneous coordinate
system of $\mathbb{C}P^{n}(4), (Z^{s}\in \mathbb{C})$. Defining $\Psi_{s\overline{t}}$, for $s,t = 1, 2,\cdots,n+1,$ by

\begin{equation*}\Psi_{s\overline{t}}(\theta_{1},\cdots,\theta_{n+1},Z^{1},\cdots,Z^{n+1})
=\frac{(\theta_{s}Z^{s})\overline{(\theta_{t}Z^{t})}}{\sum^{n+1}_{r=1}(\theta_{r}Z^{r})\overline{(\theta_{r}Z^{r}})},
\end{equation*}where $\theta_{s}, s= 1,2,\cdots,n+1,$ are $(n+1)$ coefficients of scarling coordinate system which are determined later,
we have

\begin{equation*}\Psi_{s\overline{t}} = \overline{\Psi_{t\overline{s}}},\ \ \ \sum^{n+1}_{s,t=1}\Psi_{s\overline{t}}\overline{\Psi_{s\overline{t}}} = 1.\end{equation*}
For any fixed point $P\in M^{n}$, we can choose a new homogeneous coordinate system of
$\mathbb{C}P^{n}(4)$, which satisfies that, at $P$

\begin{equation}\label{Z}\widetilde{Z}^{1}=\widetilde{Z}^{2}=\cdots=\widetilde{Z}^{n}=0,\ \ \ \ \widetilde{Z}^{n+1}\neq0
\end{equation} and

\begin{equation*}Z^{s}=\sum^{n+1}_{r=1}A_{sr}\widetilde{Z}^{r},
\end{equation*}
where $$A= (A_{st})\in U(n + 1)$$ is an $(n+1)\times(n+1)$-unitary matrix, that is, $A_{st}$ satisfies

\begin{equation*} \sum^{n+1}_{r=1}A_{rs}\overline{A_{rt}}=\sum^{n+1}_{r=1}A_{sr}\overline{A_{tr}}=\delta_{st}.
\end{equation*}
Then, we know that $$z = (z^{1},\cdots,z^{n}),\ \ z^{s}=\frac{\theta_{s}\widetilde{Z}^{s}}{\theta_{n+1}\widetilde{Z}^{n+1}},$$ is a local holomorphic coordinate
system of $\mathbb{C}P^{n}(4)$ in a neighborhood $U$ of the point $P\in M^{n}$ and \eqref{Z} implies that,  at $P$,

\begin{equation*}z^{1}=\cdots=z^{n}=0.\end{equation*}
Hence, we infer, for $s,t = 1,2,\cdots,n+1,$

\begin{equation}\begin{aligned}\label{f-funs-1-1}&\widetilde{\Psi}_{s\overline{t}}(\theta_{1},\cdots,\theta_{n+1},Z^{1},\cdots,Z^{n+1})
=\frac{(\theta_{s}\widetilde{Z}^{s})\overline{(\theta_{t}\widetilde{Z}^{t})}}
{\sum^{n+1}_{r=1}(\theta_{r}\widetilde{Z}^{r})\overline{(\theta_{r}\widetilde{Z}^{r}})}
=\frac{z^{s}\overline{z^{t}}}{1+\sum^{n}_{r=1}z^{r}\overline{z^{r}}}\\
&\Psi_{s\overline{t}}=\sum^{n+1}_{r,v=1}A_{sr}\overline{A_{tv}}\widetilde{\Psi}_{r\overline{v}},\ \ z^{n+1}\equiv1.\end{aligned}\end{equation}
Putting $$\mathcal{G}_{s\overline{t}}(\theta_{1},\cdots,\theta_{n+1},Z^{1},\cdots,Z^{n+1})=
Re(\Psi_{s\overline{t}}(\theta_{1},\cdots,\theta_{n+1},Z^{1},\cdots,Z^{n+1}))$$ and

$$\mathcal{F}_{s\overline{t}}(\theta_{1},\cdots,\theta_{n+1},Z^{1},\cdots,Z^{n+1})={\rm Im}(\Psi_{s\overline{t}}(\theta_{1},\cdots,\theta_{n+1},Z^{1},\cdots,Z^{n+1})),$$
for $s,t=1,2,\cdots,n+1$, then, we have

\begin{equation}\begin{aligned}\label{G-F=1-1}&\sum^{n+1}_{s,t=1}\left(\mathcal{G}^{2}_{s\overline{t}}+\mathcal{F}^{2}_{s\overline{t}}\right)
=\sum^{n+1}_{s,t=1}\Psi_{s\overline{t}}\overline{\Psi_{s\overline{t}}}
=\sum^{n+1}_{s,t=1}\widetilde{\Psi}_{p\overline{q}}\overline{\widetilde{\Psi}_{p\overline{q}}}=1\\
&\sum^{n+1}_{s,t=1}\left(\mathcal{G}_{s\overline{t}}\nabla \mathcal{G}_{s\overline{t}}+\mathcal{F}_{s\overline{t}}\nabla \mathcal{F}_{s\overline{t}}\right)=0.\end{aligned}\end{equation}
Next, according to special proportion, we define the corresponding weighted transformation (or we say that they are some scarling transformations) to the variables $z^{s}$, where $s=1,2,\cdots,n+1$, as follows: $$y^{s}=\frac{\theta_{n+1}}{\theta_{s}}z^{s}.$$
We note that those weighted transformations play a significant role in the calculation. Then, it follows from \eqref{f-funs-1-1} that

\begin{equation}\begin{aligned}\label{f-funs-12}&\widetilde{\Psi}_{s\overline{t}}(\theta_{1},\cdots,\theta_{n+1},Z^{1},\cdots,Z^{n+1})
=\frac{\theta_{s}\theta_{t}y^{s}\overline{y^{t}}}{\theta^{2}_{n+1}+\sum^{n}_{r=1}\theta^{2}_{r}y^{r}\overline{y^{r}}},\end{aligned}\end{equation}and
\begin{equation}\begin{aligned}\label{f-funs-2-n+1}\widetilde{\Psi}_{(n+1)\overline{(n+1)}}(\theta_{1},\cdots,\theta_{n+1},Z^{1},\cdots,Z^{n+1})
=1\end{aligned}\end{equation}
Let
$$g=\sum^{n}_{s,t=1} g_{s\overline{t}}dy^{s}d\overline{y^{t}}$$
be the Fubini-Study metric of $\mathbb{C}P^{n}(4)$. Then,

\begin{equation}\begin{aligned}\label{g-st}&g_{s\overline{t}}=\frac{\delta_{s\overline{t}}}{1+\sum^{n}_{r=1}
|y^{r}|^{2}}-\frac{y^{t}\overline{y^{s}}}{\left(1+\sum^{n}_{r=1}|y^{r}|^{2}\right)^{2}}\\
&\left(g_{s\overline{t}}\right)^{-1}=\left(g^{s\overline{t}}\right)\\
&g^{s\overline{t}}=\left(1+\sum^{n}_{r=1}|y^{r}|^{2}\right)(\delta_{s\overline{t}}+y^{t}\overline{y^{s}}).\end{aligned}\end{equation}
Under the local coordinate system, for any smooth function $\Psi$, it follows from \eqref{g-st} that

\begin{equation*}\begin{aligned}\Delta \Psi=4\sum^{n}_{s,t=1}g^{s\overline{t}}\frac{\partial^{2}}{\partial y^{s}\overline{\partial y^{t}}} \Psi,\end{aligned}\end{equation*}
And, by the definition of $\widetilde{\Psi}_{s\overline{t}}$, we know that, at $P$,

\begin{equation*}\begin{aligned}&\Delta =4\sum^{n}_{r=1}\frac{\partial^{2}}{\partial y^{r}\overline{\partial y^{r}}},\\
&\nabla \widetilde{\Psi}_{s\overline{t}}= 0, \ \ {\rm  if}\ \  s\neq0\ \  {\rm  and} \ \ t\neq 0\\
&\nabla \widetilde{\Psi}_{s\overline{s}}= 0,\\
&\Delta \widetilde{\Psi}_{s\overline{t}}= 0, \ \ {\rm if} \ \ s\neq t\\
&\Delta \widetilde{\Psi}_{(n+1)\overline{(n+1)}}=-4\sum^{n}_{s=1}\frac{\theta_{s}^{2}}{\theta_{n+1}^{2}}; \Delta \widetilde{\Psi}_{r\overline{r}}=\frac{4\theta_{r}^{2}}{\theta_{n+1}^{2}},\ \ r=1,\cdots,n.
\end{aligned}\end{equation*}
Thus, we obtain from \eqref{G-F=1-1}, \eqref{f-funs-12} and  \eqref{f-funs-2-n+1}, at $P$,

\begin{equation}\begin{aligned}\label{sys-1}\sum^{n+1}_{s,t=1}\left(\nabla \mathcal{G}_{s\overline{t}}\cdot
\nabla \mathcal{G}_{s\overline{t}}+\nabla \mathcal{F}_{s\overline{t}}\cdot\nabla \mathcal{F}_{s\overline{t}}\right)&=
-\sum^{n+1}_{s,t=1}\left(\mathcal{G}_{s\overline{t}}\Delta \mathcal{G}_{s\overline{t}}+\mathcal{F}_{s\overline{t}}\Delta \mathcal{F}_{s\overline{t}}\right)\\
&=-{\rm Re}\sum^{n+1}_{s,t=1}\overline{\Psi_{s\overline{t}}}\Delta \Psi_{s\overline{t}}\\&
=-{\rm Re}\sum^{n+1}_{s,t=1}\sum^{n+1}_{r,w=1}\overline{A_{sr}\overline{A_{tw}}}\overline{\widetilde{\Psi}_{r\overline{w}}}
\sum^{n+1}_{u,v=1}A_{su}\overline{A_{tv}}\Delta\widetilde{\Psi}_{u\overline{v}}\\
&=-\sum^{n+1}_{s,t=1}{\rm Re}\overline{\widetilde{\Psi}_{s\overline{t}}}\Delta \widetilde{\Psi}_{s\overline{t}}\\&
=-\widetilde{\Psi}_{(n+1)\overline{(n+1)}}\Delta\widetilde{\Psi}_{(n+1)\overline{(n+1)}}\\&=4\sum^{n}_{s=1}\frac{\theta_{s}^{2}}{\theta_{n+1}^{2}}.
\end{aligned}\end{equation}
By a similar calculation, we have, at $P$,

\begin{equation}\begin{aligned}\label{sys-2}\sum^{n+1}_{s,t=1}\left(\nabla \mathcal{G}_{s\overline{t}}
\Delta \mathcal{G}_{s\overline{t}}+\nabla \mathcal{F}_{s\overline{t}}\Delta \mathcal{F}_{s\overline{t}}\right)
={\rm Re}\sum^{n+1}_{s,t=1}\nabla\overline{\Psi_{s\overline{t}}}\Delta \Psi_{s\overline{t}}=0.
\end{aligned}\end{equation}

\begin{equation}\begin{aligned}\label{sys-3}\sum^{n+1}_{s,t=1}\left(\Delta \mathcal{G}_{s\overline{t}}
\Delta \mathcal{G}_{s\overline{t}}+\Delta \mathcal{F}_{s\overline{t}}\Delta \mathcal{F}_{s\overline{t}}\right)
&={\rm Re}\sum^{n+1}_{s,t=1}\overline{\Delta \Psi_{s\overline{t}}}\Delta \Psi_{s\overline{t}}\\
&={\rm Re}\sum^{n+1}_{s,t=1}\overline{\Delta \widetilde{\Psi}_{s\overline{t}}}\Delta \widetilde{\Psi}_{s\overline{t}}
\\&=\left(-4\sum^{n}_{s=1}\frac{\theta_{s}^{2}}{\theta_{n+1}^{2}}\right)\times\left(-4\sum^{n}_{s=1}\frac{\theta_{s}^{2}}{\theta_{n+1}^{2}}\right)
+4\times 4\times\sum^{n}_{s=1}\frac{\theta_{s}^{4}}{\theta_{n+1}^{4}}
\\&=16\left(\sum^{n}_{s=1}\frac{\theta_{s}^{2}}{\theta_{n+1}^{2}} \right)^{2}+16\sum^{n}_{s=1}\frac{\theta_{s}^{4}}{\theta_{n+1}^{4}}.
\end{aligned}\end{equation}and

\begin{equation}\begin{aligned}\label{sys-4}\sum^{n+1}_{s,t=1}\left(\langle\nabla \mathcal{G}_{s\overline{t}},\nabla u_{i}\rangle^{2}
+\langle\nabla \mathcal{F}_{s\overline{t}},\nabla u_{i}\rangle^{2}\right)
&={\rm Re}\sum^{n+1}_{s,t=1}\overline{\langle\nabla\overline{\Psi_{s\overline{t}}},\nabla u_{i}\rangle}\langle\nabla \Psi_{s\overline{t}},\nabla u_{i}\rangle\\
&={\rm Re}\sum^{n+1}_{s,t=1}\overline{\langle\nabla\overline{\widetilde{\Psi}_{s\overline{t}}},
\nabla u_{i}\rangle}\langle\nabla \widetilde{\Psi}_{s\overline{t}},\nabla u_{i}\rangle\\
&\leq\frac{2\overline{\theta}^{2}}{\theta^{2}_{n+1}}|\nabla u_{i}|^{2},
\end{aligned}\end{equation}where$$\overline{\theta}
=\max_{1\leq s\leq n+1}\{\theta_{s}\}.$$Since $P$ is arbitrary, we have at any point $x \in M^{n}$,

\begin{equation}\left\{\begin{aligned}\label{sys-5}&\sum^{n+1}_{s,t=1}\left(\nabla \mathcal{G}_{s\overline{t}}\cdot\nabla \mathcal{G}_{s\overline{t}}
+\nabla \mathcal{F}_{s\overline{t}}\cdot\nabla \mathcal{F}_{s\overline{t}}\right)=4\sum^{n}_{s=1}\frac{\theta_{s}^{2}}{\theta_{n+1}^{2}}.
\\&\sum^{n+1}_{s,t=1}\left(\nabla \mathcal{G}_{s\overline{t}}\Delta \mathcal{G}_{s\overline{t}}+\nabla \mathcal{F}_{s\overline{t}}\Delta \mathcal{F}_{s\overline{t}}\right)=0.
\\&\sum^{n+1}_{s,t=1}\left(\Delta \mathcal{G}_{s\overline{t}}\Delta \mathcal{G}_{s\overline{t}}+\Delta \mathcal{F}_{s\overline{t}}\Delta \mathcal{F}_{s\overline{t}}\right)=
16\left[\left(\sum^{n}_{s=1}\frac{\theta_{s}^{2}}{\theta_{n+1}^{2}} \right)^{2}+\sum^{n}_{s=1}\frac{\theta_{s}^{4}}{\theta_{n+1}^{4}}\right].
\\&\sum^{n+1}_{s,t=1}\left(\langle\nabla \mathcal{G}_{s\overline{t}},\nabla u_{i}\rangle^{2}
+\langle\nabla \mathcal{F}_{s\overline{t}},\nabla u_{i}\rangle^{2}\right)\leq\frac{2\overline{\theta}^{2}}{\theta^{2}_{n+1}}|\nabla u_{i}|^{2}.
\end{aligned}\right.\end{equation}
By applying the Lemma \ref{lem2.3} to the functions $\mathcal{G}_{s\overline{t}}$ and $\mathcal{F}_{s\overline{t}}$ and taking sum on $s$ and $t$
from $1$ to $n+1$, we infer from \eqref{sys-1}, \eqref{sys-2}, \eqref{sys-3}, \eqref{sys-4} and \eqref{sys-5} that

\begin{equation}\begin{aligned}\label{sum-pq-1}
\sum^{n+1}_{s,t=1}\left(\|u_{i}\nabla \mathcal{G}_{s\overline{t}}\|^{2}+\|u_{i}\nabla \mathcal{F}_{s\overline{t}}\|^{2}\right)&=
4\int_{M^{n}}\sum^{n+1}_{s,t=1}\left(\langle\nabla \mathcal{G}_{s\overline{t}}\Delta G_{s\overline{t}},u_{i}\nabla u_{i}\rangle
+\langle\nabla \mathcal{F}_{s\overline{t}}\Delta \mathcal{F}_{s\overline{t}},u_{i}\nabla u_{i}\rangle\right)dv
\\&+\int_{M^{n}}\sum^{n+1}_{s,t=1}\left(\Delta \mathcal{G}_{s\overline{t}}\Delta \mathcal{G}_{s\overline{t}}
+\Delta \mathcal{F}_{s\overline{t}}\Delta \mathcal{F}_{s\overline{t}}\right)u_{i}^{2}dv
\\&+4\int_{M^{n}}\sum^{n+1}_{s,t=1}\left(\langle\nabla \mathcal{G}_{s\overline{t}},\nabla u_{i}\rangle^{2}
+\langle\nabla \mathcal{F}_{s\overline{t}},\nabla u_{i}\rangle^{2}\right)dv\\&
\leq16\left[\left(\sum^{n}_{s=1}\frac{\theta_{s}^{2}}{\theta_{n+1}^{2}} \right)^{2}+\sum^{n}_{s=1}\frac{\theta_{s}^{4}}{\theta_{n+1}^{4}}\right]+\frac{8\overline{\theta}^{2}}{\theta^{2}_{n+1}}\lambda_{i}.
\end{aligned}\end{equation}We choose $(n+1)$ positive real numbers $\theta_{s}$, such that, for all $s=1,2,\cdots,n+1$,
\begin{equation}\begin{aligned}\label{5.20} \sum_{s,t=1}^{n+1}\left[\int2 u_{i}\langle\nabla \mathcal{G}_{s\overline{t}},\nabla u_{i}\rangle\Delta  \mathcal{G}_{s\overline{t}}
dv+\int2 u_{i}\langle\nabla \mathcal{F}_{s\overline{t}},\nabla u_{i}\rangle\Delta  \mathcal{F}_{s\overline{t}}
dv\right]&=0,\end{aligned}\end{equation}and
\begin{equation*}a_{s\overline{t}}^{2}=\|\nabla \mathcal{G}_{s\overline{t}}u_{i}\|^{2}+\|\nabla \mathcal{F}_{s\overline{t}}u_{i}\|^{2}
\geq\sqrt{\||\nabla \mathcal{G}_{s\overline{t}}|^{2}u_{i}\|^{2}}+\sqrt{\||\nabla \mathcal{F}_{s\overline{t}}|^{2}u_{i}\|^{2}}=b_{s\overline{t}}\geq0.\end{equation*}
Let  $$\beta=\min_{1\leq s,t\leq n+1}\{b_{s\overline{t}}\},$$
and $l=n+1$, then, by lemma \ref{lem2.1}, we have

\begin{equation}\begin{aligned}\label{a-b-1}\sum^{n+1}_{s,t=1}\frac{a_{s\overline{t}}^{2}+b_{s\overline{t}}}{2}&=2\sum^{n}_{s=1}\frac{\theta_{s}^{2}}{\theta_{n+1}^{2}}
+\sum_{s,t=1}^{n+1}\frac{b_{s\overline{t}}}{2}\\&\geq2\sum^{n}_{s=1}\frac{\theta_{s}^{2}}{\theta_{n+1}^{2}}+\frac{1}{2}(n+1)^{2}\beta.\end{aligned}\end{equation}
From \eqref{general-formula-2}, \eqref{sum-pq-1}, \eqref{5.20} and \eqref{a-b-1}, we obtain
\begin{equation*}\begin{aligned}\left[2\sum^{n}_{s=1}\frac{\theta_{s}^{2}}{\theta_{n+1}^{2}}+\frac{1}{2}(n+1)^{2}\beta\right]\left(\lambda_{k+2}-\lambda_{k+1}\right)^{2}
\leq4(\lambda_{k+2}+\rho)\left[16\sum^{n}_{s=1}\frac{\theta_{s}^{2}}{\theta_{n+1}^{2}}\left(\sum^{n}_{s=1}\frac{\theta_{s}^{2}}{\theta_{n+1}^{2}} + 1\right)
+\frac{8\overline{\theta}^{2}}{\theta^{2}_{n+1}}\lambda_{i}\right],
\end{aligned}\end{equation*}for any $i,~i=1,2,\cdots,k$, which implies
 \begin{equation}\begin{aligned}\label{po-gap-1}\lambda_{k+2}-\lambda_{k+1}
\leq\sqrt{\frac{4(\lambda_{k+2}+\rho)\left\{16\left[\left(\sum^{n}_{s=1}\frac{\theta_{s}^{2}}{\theta_{n+1}^{2}} \right)^{2}
+\sum^{n}_{s=1}\frac{\theta_{s}^{4}}{\theta_{n+1}^{4}}\right]+\dfrac{8\overline{\theta}^{2}}{\theta^{2}_{n+1}}\lambda_{1}\right\}}
{2\sum^{n}_{s=1}\frac{\theta_{s}^{2}}{\theta_{n+1}^{2}}+\frac{1}{2}(n+1)^{2}\beta}}.
\end{aligned}\end{equation} In order to complete the proof, we need a recursion formula given by Cheng and Yang in \cite{CY3} as follows:
Let  $\mu_1 \leq  \mu_2 \leq  \dots, \leq \mu_{k+1}$ be any positive
real numbers satisfying
\begin{equation*}
  \sum_{i=1}^k(\mu_{k+1}-\mu_i)^2 \le
 \frac 4n\sum_{i=1}^k\mu_i(\mu_{k+1} -\mu_i).
\end{equation*}
 Define
 \begin{equation*}
 \Gamma_k=\frac 1k\sum_{i=1}^k\mu_i,\qquad \mathcal{E}_k=\frac 1k
\sum_{i=1}^k\mu_i^2, \ \ \ \mathcal{H}_k=\left (1+\frac 2n \right
)\Gamma_k^2-\mathcal{E}_k.
\end{equation*}
Then, we have
\begin{equation}\label{recursion}
\mathcal{H}_{k+1}\leq C(n,k) \left ( \frac {k+1}k \right )^{\frac 4n}\mathcal{H}_k,
\end{equation}
where
$$
C(n,k) =1-\frac 1{3n}
  \left (\frac k{k+1}\right )^{\frac
  4n}\frac {\left(1+\frac 2n\right )\left (1+
  \frac 4n\right )}{(k+1)^3}<1.
$$
 Let  $\mu_{i}=\lambda_{i}+2n(n+1)$. By \eqref{cy-universal-1} and \eqref{recursion}, we yield

\begin{equation*}\begin{aligned}
\sum^{k}_{i=1}\left(\mu_{k+1}-\mu_{i}\right)^{2}\leq\frac{4}{2n}\sum^{k}_{i=1}\left(\mu_{k+1}-\mu_{i}\right)\mu_{i},
\end{aligned}\end{equation*}
which implies

\begin{equation}\begin{aligned}\label{cy-upper-1-1}
\lambda_{k+1}+2n(n+1)\leq C_{0}(n)(\lambda_{1}+2n(n+1))k^{\frac{1}{n}},
\end{aligned}\end{equation} $C_{0}(n)$ is the same as the one in \eqref{Cheng-Yang-ineq}.
Therefore, putting $\rho=2n(n+1)$  and synthesizing \eqref{po-gap-1} and \eqref{cy-upper-1-1}, we obtain

\begin{equation*}\begin{aligned}\lambda_{k+2}-\lambda_{k+1}
&\leq\sqrt{\frac{4(\lambda_{k+2}+2n(n+1))\left\{16\left[\left(\sum^{n}_{s=1}\frac{\theta_{s}^{2}}{\theta_{n+1}^{2}} \right)^{2}
+\sum^{n}_{s=1}\frac{\theta_{s}^{4}}{\theta_{n+1}^{4}}\right]
+\dfrac{8\overline{\theta}^{2}}{\theta^{2}_{n+1}}\lambda_{1}\right\}}
{2\sum^{n}_{s=1}\frac{\theta_{s}^{2}}{\theta_{n+1}^{2}}+\frac{1}{2}(n+1)^{2}\beta}}\\
&\leq4\sqrt{\frac{ 4\left[\left(\sum^{n}_{s=1}\frac{\theta_{s}^{2}}{\theta_{n+1}^{2}} \right)^{2}
+\sum^{n}_{s=1}\frac{\theta_{s}^{4}}{\theta_{n+1}^{4}}\right]+\dfrac{2\overline{\theta}^{2}}{\theta^{2}_{n+1}}\lambda_{1}}
{2\sum^{n}_{s=1}\frac{\theta_{s}^{2}}{\theta_{n+1}^{2}}+\frac{1}{2}(n+1)^{2}\beta}}\cdot\sqrt{C_{0}(n)(\lambda_{1}+2n(n+1))}\cdot (k+1)^{\frac{1}{2n}}\\
&=C(n,\Omega)k^{\frac{1}{2n}},
\end{aligned}\end{equation*} where

\begin{equation*}C(n,\Omega)=4\sqrt{\frac{ 4\left[\left(\sum^{n}_{s=1}\frac{\theta_{s}^{2}}{\theta_{n+1}^{2}} \right)^{2}
+\sum^{n}_{s=1}\frac{\theta_{s}^{4}}{\theta_{n+1}^{4}}\right]+\dfrac{2\overline{\theta}^{2}}{\theta^{2}_{n+1}}\lambda_{1}}
{2\sum^{n}_{s=1}\frac{\theta_{s}^{2}}{\theta_{n+1}^{2}}+\frac{1}{2}(n+1)^{2}\beta}}\cdot\sqrt{C_{0}(n)(\lambda_{1}+2n(n+1))}.\end{equation*}
Therefore, we finish the proof of this theorem.

\end{proof}

Next, we shall consider the eigenvalue problem of the Laplacian on a
compact complex hypersurface $M^{n}$ without boundary in $\mathbb{C}P^{n+1}(4)$:

\begin{thm}\label{thm5.2} Let $M^{n}$ a compact complex hypersurface with empty boundary
in the standard complex projective space $\mathbb{C}P^{n}(4)$ with holomorphic sectional curvature $4$,
and $\overline{\lambda}_{i}$ be the $i$-th $(i=1,2,\cdots,k)$
eigenvalue of the eigenvalue problem \eqref{Eigen-Prob-closed}. Then, we have

\begin{equation}\begin{aligned}\overline{\lambda}_{k+2}-\overline{\lambda}_{k+1}\leq C(n,M^{n})(k+1)^{\frac{1}{2n}},
\end{aligned}\end{equation} where

\begin{equation*}C(n,M^{n})=\sqrt{\frac{4\left[\left(\sum^{n}_{s=1}\frac{\theta_{s}^{2}}{\theta_{n+2}^{2}}\right)^{2}+\sum^{n}_{s=1}\frac{\theta_{s}^{4}}{\theta_{n+2}^{4}}\right]
+\dfrac{2\overline{\theta}^{2}}{\theta^{2}_{n+2}}\overline{\lambda}_{1}}
{2\sum^{n}_{s=1}\frac{\theta_{s}^{2}}{\theta_{n+1}^{2}}+\frac{1}{2}(n+1)^{2}\beta}}\cdot\sqrt{C_{0}(n)(\overline{\lambda}_{1}+2n(n+1))},\end{equation*}
 and $C_{0}(n)$ is the same as the one in \eqref{Cheng-Yang-ineq}. \end{thm}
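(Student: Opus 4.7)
The plan is to run the argument of Theorem \ref{thm5.1} almost verbatim, but in the ambient space $\mathbb{C}P^{n+1}(4)$, whose homogeneous coordinates are now $Z=(Z^{1},\dots,Z^{n+2})$, and then restrict the resulting test functions to $M^{n}$. Concretely, for $(n+2)$ positive scaling constants $\theta_{1},\dots,\theta_{n+2}$, I will define
\[
\Psi_{s\overline{t}}(\theta_{1},\dots,\theta_{n+2},Z)=\frac{(\theta_{s}Z^{s})\overline{(\theta_{t}Z^{t})}}{\sum_{r=1}^{n+2}(\theta_{r}Z^{r})\overline{(\theta_{r}Z^{r})}},\qquad 1\le s,t\le n+2,
\]
and set $\mathcal{G}_{s\overline{t}}=\mathrm{Re}\,\Psi_{s\overline{t}}$, $\mathcal{F}_{s\overline{t}}=\mathrm{Im}\,\Psi_{s\overline{t}}$. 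At a fixed point $P\in M^{n}$ I will choose a unitary change of homogeneous coordinates that makes $\widetilde Z^{1}=\dots=\widetilde Z^{n+1}=0$ and $\widetilde Z^{n+2}\neq 0$, exactly as in the proof of Theorem \ref{thm5.1}, and then introduce the weighted local affine coordinates $y^{s}=\theta_{n+2}\widetilde Z^{s}/(\theta_{s}\widetilde Z^{n+2})$.

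The second step is to translate the pointwise identities for gradients and Laplacians of $\Psi_{s\overline{t}}$ (equations \eqref{sys-1}--\eqref{sys-4}) into pointwise identities on $M^{n}$. Here I use the crucial fact, which makes the complex-hypersurface case work, that every complex submanifold of a K\"ahler manifold is minimal; by the Gauss formula the induced Laplacian of a restricted smooth function coincides with the trace, along the tangent directions of $M^{n}$, of the ambient Hessian. Because the unitary rotation to the adapted coordinates can be chosen so that the first $n$ holomorphic directions are tangent to $M^{n}$ at $P$, all of the algebraic identities in \eqref{sys-5} carry over with the ambient index range $1\le s\le n+1$ replaced by $1\le s\le n$, and with $\theta_{n+1}$ replaced by $\theta_{n+2}$. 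This yields the hypersurface analogues
\[
\sum_{s,t=1}^{n+2}\bigl(|\nabla\mathcal G_{s\overline t}|^{2}+|\nabla\mathcal F_{s\overline t}|^{2}\bigr)=4\sum_{s=1}^{n}\frac{\theta_{s}^{2}}{\theta_{n+2}^{2}},\qquad \sum_{s,t=1}^{n+2}\bigl((\Delta\mathcal G_{s\overline t})^{2}+(\Delta\mathcal F_{s\overline t})^{2}\bigr)=16\Bigl[\bigl(\sum_{s=1}^{n}\tfrac{\theta_{s}^{2}}{\theta_{n+2}^{2}}\bigr)^{2}+\sum_{s=1}^{n}\tfrac{\theta_{s}^{4}}{\theta_{n+2}^{4}}\Bigr],
\]
together with the orthogonality $\sum_{s,t}(\nabla\mathcal G_{s\overline t}\Delta\mathcal G_{s\overline t}+\nabla\mathcal F_{s\overline t}\Delta\mathcal F_{s\overline t})=0$ and the gradient bound $\sum_{s,t}(\langle\nabla\mathcal G_{s\overline t},\nabla u_{i}\rangle^{2}+\langle\nabla\mathcal F_{s\overline t},\nabla u_{i}\rangle^{2})\le (2\overline\theta^{2}/\theta_{n+2}^{2})|\nabla u_{i}|^{2}$ with $\overline\theta=\max_{s}\theta_{s}$.

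The third step is to feed these identities into Lemma \ref{lem2.4} (the closed-manifold version of Lemma \ref{lem2.3}) applied with the family $\{\mathcal G_{s\overline t},\mathcal F_{s\overline t}\}_{s,t=1}^{n+2}$. Just as in the proof of Theorem \ref{thm5.1}, one chooses the scaling parameters $\theta_{s}$ so that $a_{s\overline t}^{2}\ge b_{s\overline t}\ge 0$ and so that the cross term $\sum_{s,t}\int 2u_{i}\langle\nabla\mathcal G_{s\overline t},\nabla u_{i}\rangle\Delta\mathcal G_{s\overline t}+(\mathcal F\text{-term})\,dv$ vanishes; such a choice is granted abstractly by the remark following Theorem \ref{thm1.1}. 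Taking $\rho=2n(n+1)$ in Lemma \ref{lem2.4} and setting $\beta=\min_{s,t}b_{s\overline t}$ produces the gap estimate
\[
\bigl(\overline\lambda_{k+2}-\overline\lambda_{k+1}\bigr)^{2}\le \frac{4(\overline\lambda_{k+2}+2n(n+1))\bigl\{16[(\sum_{s=1}^{n}\theta_{s}^{2}/\theta_{n+2}^{2})^{2}+\sum_{s=1}^{n}\theta_{s}^{4}/\theta_{n+2}^{4}]+8\overline\theta^{2}\overline\lambda_{1}/\theta_{n+2}^{2}\bigr\}}{2\sum_{s=1}^{n}\theta_{s}^{2}/\theta_{n+1}^{2}+\tfrac12(n+1)^{2}\beta}.
\]

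The final step is to bound $\overline\lambda_{k+2}+2n(n+1)$. Apply the Cheng--Yang recursion formula \eqref{recursion} to the shifted sequence $\mu_{i}=\overline\lambda_{i}+2n(n+1)$, which satisfies the Yang-type hypothesis $\sum_{i=1}^{k}(\mu_{k+1}-\mu_{i})^{2}\le\frac{4}{2n}\sum_{i=1}^{k}(\mu_{k+1}-\mu_{i})\mu_{i}$ by \eqref{cy-universal-2}. This yields $\overline\lambda_{k+1}+2n(n+1)\le C_{0}(n)(\overline\lambda_{1}+2n(n+1))k^{1/n}$, and substituting back gives exactly $\overline\lambda_{k+2}-\overline\lambda_{k+1}\le C(n,M^{n})(k+1)^{1/(2n)}$ with the constant claimed in the theorem. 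The step I expect to be the main obstacle is verifying that the pointwise identities \eqref{sys-1}--\eqref{sys-4} pass from the ambient $\mathbb{C}P^{n+1}(4)$ to the induced geometry on the hypersurface $M^{n}$; as sketched above, this hinges on the minimality of complex submanifolds of K\"ahler manifolds together with the judicious choice of adapted holomorphic coordinates in which the first $n$ directions are tangent to $M^{n}$.
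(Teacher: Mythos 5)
Your proposal follows essentially the same route as the paper's own proof: the same scaled trial functions $\Psi_{s\overline{t}}$ built from homogeneous coordinates of $\mathbb{C}P^{n+1}(4)$ and restricted to $M^{n}$, the same unitary-adapted coordinates at a point $P$, the same application of the gap lemma (Lemma \ref{lem2.4}) with $\rho=2n(n+1)$ and $\beta=\min_{s,t}b_{s\overline{t}}$, and the same use of the Cheng--Yang recursion on $\mu_{i}=\overline{\lambda}_{i}+2n(n+1)$ to control $\overline{\lambda}_{k+2}$. The only cosmetic differences are that you justify the restricted pointwise identities via minimality of complex submanifolds of a K\"ahler manifold, whereas the paper writes $M^{n}$ locally as a holomorphic graph $z^{n+1}=h(z^{1},\dots,z^{n})$ with $\partial h/\partial z^{s}|_{P}=0$ and computes directly, and that you quote \eqref{cy-universal-2} instead of re-deriving it from \eqref{c-y-formula} with $\theta_{1}=\cdots=\theta_{n+2}=1$ as the paper does.
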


\begin{proof}Since the method of proof is the same as in the proof of
Theorem 1, we shall only give its outline.
Let $Z=(Z^{1},Z^{2},\cdots,Z^{n+2})$ be a homogeneous coordinate
system of $\mathbb{C}P^{n+1}(4), (Z_{s}\in \mathbb{C})$. Defining $\Psi_{s\overline{t}}$, for $s,t = 1, 2,\cdots,n+2,$ by

\begin{equation*}\Psi_{s\overline{t}}(\theta_{1},\cdots,\theta_{n+2},Z^{1},\cdots,Z^{n+2})=\frac{(\theta_{s}Z^{s})
\overline{(\theta_{t}Z^{t})}}{\sum^{n+2}_{r=1}(\theta_{r}Z^{r})\overline{(\theta_{r}Z^{r}})},
\end{equation*}where $\theta_{s}, s= 1,2,\cdots,n+2,$ are $(n+2)$ coefficients of scarling coordinate system, which will be determined later,
we have

\begin{equation*}\Psi_{s\overline{t}} = \overline{\Psi_{t\overline{s}}},\ \ \ \sum^{n+2}_{s,t=1}\Psi_{s\overline{t}}\overline{\Psi_{s\overline{t}}} = 1.\end{equation*}
For any fixed point $P\in M^{n}$, we can choose a new homogeneous coordinate system of
$\mathbb{C}P^{n}(4)$, which satisfies, at $P$

\begin{equation*}\widetilde{Z}^{1}=\widetilde{Z}^{2}=\cdots=\widetilde{Z}^{n+1}=0,\ \ \ \ \widetilde{Z}^{n+2}\neq0
\end{equation*} and

\begin{equation*}Z^{s}=\sum^{n+2}_{r=1}C_{sr}\widetilde{Z}^{r},
\end{equation*}
where $A= (A_{st})\in U(n + 2)$ is an $(n+2)\times(n+2)$-unitary matrix, that is, $A_{st}$ satisfies

\begin{equation*} \sum^{n+2}_{r=1}A_{rs}\overline{A_{rt}}=\sum^{n+2}_{r=1}A_{sr}\overline{A_{tr}}=\delta_{st}.
\end{equation*}Let
$$z^{s}=\frac{\theta_{s}\widetilde{Z}^{s}}{\theta_{n+2}\widetilde{Z}^{n+2}},\ \ {\rm for}\ \ s=1,2,\cdots,n+2,$$
Then, we know that $z = (z^{1},\cdots,z^{n})$ is a local holomorphic coordinate
system of $M$ in a neighborhood $U$ of the point $P\in M$ and $z^{n+1}=h(z^{1},z^{2},\cdots,z^{n})$
is a holomorphic function of $z^{1},z^{2},\cdots,z^{n}$ and satisfying,

\begin{equation*}\frac{\partial h}{\partial z^{s}} \Bigg{|}_{P}=0,\ \ {\rm for}\ \ s=1,2,\cdots,n,
\end{equation*}
At the point $P$, one has

\begin{equation*}z^{1}=\cdots=z^{n+1}=0.\end{equation*}
Hence, for any $s,t = 1,2,\cdots,n+2,$ we have,

\begin{equation}\begin{aligned}\label{f-funs-1}&\widetilde{\Psi}_{s\overline{t}}(\theta_{1},\cdots,
\theta_{n+2},Z^{1},\cdots,Z^{n+2})=\frac{(\theta_{s}\widetilde{Z}^{s})\overline{(\theta_{t}\widetilde{Z}^{t})}}
{\sum^{n+2}_{r=1}(\theta_{r}\widetilde{Z}^{r})\overline{(\theta_{r}\widetilde{Z}^{r}})}
=\frac{z^{s}\overline{z^{t}}}{1+\sum^{n+1}_{r=1}z^{r}\overline{z^{r}}}\\
&\Psi_{s\overline{t}}=\sum^{n+2}_{r,v=1}A_{sr}\overline{A_{tv}}\widetilde{\Psi}_{r\overline{v}},\ \ z^{n+2}\equiv1.\end{aligned}\end{equation}
Putting $$\mathcal{G}_{s\overline{t}}(\theta_{1},\cdots,\theta_{n+2},Z^{1},\cdots,Z^{n+2})= Re(\Psi_{s\overline{t}}(\theta_{1},\cdots,\theta_{n+2},Z^{1},\cdots,Z^{n+2}))$$
and $$\mathcal{F}_{s\overline{t}}(\theta_{1},\cdots,\theta_{n+2},Z^{1},\cdots,Z^{n+2})
={\rm Im}(\Psi_{s\overline{t}}(\theta_{1},\cdots,\theta_{n+2},Z^{1},\cdots,Z^{n+2})),$$ for $s,t=1,2,\cdots,n+2$, then, we infer

\begin{equation*}\begin{aligned}&\sum^{n+2}_{s,t=1}\left(\mathcal{G}^{2}_{s\overline{t}}+\mathcal{F}^{2}_{s\overline{t}}\right)
=\sum^{n+2}_{s,t=1}\Psi_{s\overline{t}}\overline{\Psi_{s\overline{t}}}
=\sum^{n+2}_{s,t=1}\widetilde{\Psi}_{p\overline{q}}\overline{\widetilde{\Psi}_{p\overline{q}}}=1\\
&\sum^{n+2}_{s,t=1}\left(\mathcal{G}_{s\overline{t}}\nabla \mathcal{G}_{s\overline{t}}+\mathcal{F}_{s\overline{t}}
\nabla \mathcal{F}_{s\overline{t}}\right)=0.\end{aligned}\end{equation*}
Similarly, we define the corresponding weighted transformations of the variables $z^{s}$, where $s=1,2,\cdots, n+2,$ as follows:  $$y^{s}=\frac{\theta_{n+2}}{\theta_{s}}z^{s}.$$ Then, it follows from \eqref{f-funs-1} that

\begin{equation}\begin{aligned}\label{f-funs-2}&\widetilde{\Psi}_{s\overline{t}}
=\frac{\theta_{s}\theta_{t}y^{s}\overline{y^{t}}}{\theta^{2}_{n+2}+\sum^{n+1}_{r=1}\theta^{2}_{r}y^{r}\overline{y^{r}}}.\end{aligned}\end{equation}
It is easy to see that, under the local coordinate system, for $z\in U$, the metric can be written as the following:

$$g_{M}=\sum^{n}_{s,t=1}\left(1+O\left(|z|^{2}\right)\right)dz^{s}d\overline{z^{t}},$$Thus, for any smooth function $\Psi$, we have

\begin{equation*}\begin{aligned}\Delta \Psi=4\sum^{n}_{s,t=1}g^{s\overline{t}}\frac{\partial^{2}}{\partial y^{s}\overline{\partial y^{t}}} \Psi,\end{aligned}\end{equation*}
By a direct calculation, we obtain, at $P$,

\begin{equation*}\begin{aligned}&\Delta =4\sum^{n}_{r=1}\frac{\partial^{2}}{\partial y^{r}\overline{\partial y^{r}}},\\
&\nabla \widetilde{\Psi}_{s\overline{t}}= 0, \ \ {\rm  if}\ \  s\neq n+2\ \  {\rm  and} \ \ t\neq n+2\\
&\nabla \widetilde{\Psi}_{s\overline{s}}= 0,\\
&\nabla \widetilde{\Psi}_{\overline{s}(n+2)}= \nabla \widetilde{\Psi}_{(n+2)\overline{s}}
= \nabla \widetilde{\Psi}_{(n+2)\overline{(n+2)}}= 0, \ \ {\rm for}\ \ s=1,2,\cdots,n,\\
&\Delta \widetilde{\Psi}_{s\overline{t}}= 0, \ \ {\rm if} \ \ s\neq t,\ \ \Delta \widetilde{\Psi}_{(n+1)\overline{(n+1)}} =0,\\
&\Delta \widetilde{\Psi}_{(n+2)\overline{(n+2)}}=-4\sum^{n}_{s=1}\frac{\theta_{s}^{2}}{\theta_{n+2}^{2}};
\ \  \Delta \widetilde{\Psi}_{r\overline{r}}=\frac{4\theta_{r}^{2}}{\theta_{n+2}^{2}},\ \ r=1,\cdots,n.
\end{aligned}\end{equation*}
Similarly, one can check the following:
\begin{equation}\left\{\begin{aligned}\label{z-a}&\sum^{n+2}_{s,t=1}\left(\nabla \mathcal{G}_{s\overline{t}}\cdot\nabla \mathcal{G}_{s\overline{t}}
+\nabla \mathcal{F}_{s\overline{t}}\cdot\nabla \mathcal{F}_{s\overline{t}}\right)=4\sum^{n}_{s=1}\frac{\theta_{p}^{2}}{\theta_{n+2}^{2}};
\\&\sum^{n+2}_{s,t=1}\left(\nabla \mathcal{G}_{s\overline{t}}\Delta \mathcal{G}_{s\overline{t}}+\nabla \mathcal{F}_{s\overline{t}}\Delta \mathcal{F}_{s\overline{t}}\right)=0;
\\&\sum^{n+2}_{s,t=1}\left(\Delta \mathcal{G}_{s\overline{t}}\Delta \mathcal{G}_{s\overline{t}}+\Delta \mathcal{F}_{s\overline{t}}\Delta \mathcal{F}_{s\overline{t}}\right)=
16\left[\left(\sum^{n}_{s=1}\frac{\theta_{s}^{2}}{\theta_{n+2}^{2}}\right)^{2}+\sum^{n}_{s=1}\frac{\theta_{s}^{4}}{\theta_{n+2}^{4}}  \right];
\\&\sum^{n+2}_{s,t=1}\left(\langle\nabla \mathcal{G}_{s\overline{t}},\nabla u_{i}\rangle^{2}
+\langle\nabla \mathcal{F}_{s\overline{t}},\nabla u_{i}\rangle^{2}\right)\leq\dfrac{2\overline{\theta}^{2}}{\theta^{2}_{n+2}}|\nabla u_{i}|^{2},
\end{aligned}\right.\end{equation}where

$$\overline{\theta}=\max_{1\leq s,t\leq n+2}\{\theta_{s\overline{t}}\}.$$
Hence, by \eqref{z-a}, if $\theta_{1}=\theta_{2}=\cdots=\theta_{n+2}=1$, we have

\begin{equation}\left\{\begin{aligned}\label{z-b}&\left[\sum^{n+2}_{s,t=1}\left(\nabla \mathcal{G}_{s\overline{t}}\cdot\nabla \mathcal{G}_{s\overline{t}}
+\nabla \mathcal{F}_{s\overline{t}}\cdot\nabla \mathcal{F}_{s\overline{t}}\right)\right]\Bigg{|}_{(1,1,\cdots,1)}=4n;
\\&
\left[\sum^{n+2}_{s,t=1}\left(\nabla \mathcal{G}_{s\overline{t}}\Delta \mathcal{G}_{s\overline{t}}+
\nabla \mathcal{F}_{s\overline{t}}\Delta \mathcal{F}_{s\overline{t}}\right)\right]\Bigg{|}_{(1,1,\cdots,1)}=0;
\\&\left[\sum^{n+2}_{s,t=1}\left(\Delta \mathcal{G}_{s\overline{t}}\Delta \mathcal{G}_{s\overline{t}}+\Delta
\mathcal{F}_{s\overline{t}}\Delta \mathcal{F}_{s\overline{t}}\right)\right]\Bigg{|}_{(1,1,\cdots,1)}=
16n\left(n + 1\right);
\\&\left[\sum^{n+2}_{s,t=1}\left(\langle\nabla \mathcal{G}_{s\overline{t}},\nabla u_{i}\rangle^{2}
+\langle\nabla \mathcal{F}_{s\overline{t}},\nabla u_{i}\rangle^{2}\right)\right]\Bigg{|}_{(1,1,\cdots,1)}=2|\nabla u_{i}|^{2}.
\end{aligned}\right.\end{equation}Therefore, it follows from \eqref{z-b} that

\begin{equation}\begin{aligned}\label{sum-pq-4}
\sum^{n+2}_{s,t=1}\left(\|u_{i}\nabla \mathcal{G}_{s\overline{t}}\|^{2}+\|u_{i}\nabla \mathcal{F}_{s\overline{t}}\|^{2}\right)&=
4n,
\end{aligned}\end{equation}and
\begin{equation}\begin{aligned}\label{sum-pq-5}
\sum^{n+2}_{s,t=1}\left(\|\langle\nabla \mathcal{G}_{s\overline{t}},\nabla u_{i}\rangle+u_{i}\Delta \mathcal{G}_{s\overline{t}}\|^{2}+\|\langle\nabla \mathcal{F}_{s\overline{t}},\nabla u_{i}\rangle+u_{i}\Delta \mathcal{F}_{s\overline{t}}\|^{2}\right)
\leq16n(n+1)+\dfrac{8\overline{\theta}^{2}}{\theta^{2}_{n+2}}\overline{\lambda}_{i}.
\end{aligned}\end{equation}
Recall that Cheng and Yang established the following general formula in \cite{CY2}:

\begin{equation}\label{c-y-formula}\sum^{k}_{i=0}(\overline{\lambda}_{k+1}-\overline{\lambda}_{i})^{2}\|h\nabla u_{i}\|^{2}\leq\sum^{k}_{i=0}
(\overline{\lambda}_{k+1}-\overline{\lambda}_{i})\|\langle\nabla h,\nabla u_{i}\rangle+u_{i}\Delta h\|^{2}.\end{equation}
Applying \eqref{c-y-formula} to the functions $\mathcal{G}_{s\overline{t}}(1,\cdots,1,Z^{1},\cdots,Z^{n+2})$
and $\mathcal{F}_{s\overline{t}}(1,\cdots,1,Z^{1},\cdots,Z^{n+2})$, we yield

\begin{equation*}\begin{aligned}
\sum^{k}_{i=1}\left(\overline{\lambda}_{k+1}-\overline{\lambda}_{i}\right)^{2}&\left(\|u_{i}\nabla \mathcal{G}_{s\overline{t}}\|^{2}
+\|u_{i}\nabla \mathcal{F}_{s\overline{t}}\|^{2}\right)\leq\\&
\left(\overline{\lambda}_{k+1}-\overline{\lambda}_{i}\right)\left(\|\langle\nabla \mathcal{G}_{s\overline{t}},\nabla u_{i}\rangle
+u_{i}\Delta \mathcal{G}_{s\overline{t}}\|^{2}+\|\langle\nabla \mathcal{F}_{s\overline{t}},\nabla u_{i}\rangle+u_{i}\Delta \mathcal{F}_{s\overline{t}}\|^{2}\right)
.
\end{aligned}\end{equation*} Taking sum on $s$ and $t$
from $1$ to $n+1$, one infer that

\begin{equation}\begin{aligned}\label{sum-pq-6}
\sum^{k}_{i=1}\left(\overline{\lambda}_{k+1}-\overline{\lambda}_{i}\right)^{2}\sum^{n+2}_{s,t=1}&\left(\|u_{i}\nabla \mathcal{G}_{s\overline{t}}\|^{2}+\|u_{i}\nabla \mathcal{F}_{s\overline{t}}\|^{2}\right)\leq\\&
\sum^{n+2}_{s,t=1}\left(\overline{\lambda}_{k+1}-\overline{\lambda}_{i}\right)\left(\|\langle\nabla \mathcal{G}_{s\overline{t}},\nabla u_{i}\rangle+u_{i}\Delta \mathcal{G}_{s\overline{t}}\|^{2}+\|\langle\nabla \mathcal{F}_{s\overline{t}},\nabla u_{i}\rangle+u_{i}\Delta \mathcal{F}_{s\overline{t}}\|^{2}\right)
.
\end{aligned}\end{equation}
Substituting \eqref{sum-pq-4} and \eqref{sum-pq-5} into \eqref{sum-pq-6}, we obtain

\begin{equation}\begin{aligned}\label{cy-universal-3}
\sum^{k}_{i=1}\left(\overline{\lambda}_{k+1}-\overline{\lambda}_{i}\right)^{2}\leq
\frac{2}{n}\sum^{k}_{i=1}\left(\overline{\lambda}_{k+1}-\overline{\lambda}_{i}\right)\left(2n(n+1)+\overline{\lambda}_{i}\right)
.
\end{aligned}\end{equation}Let  $\mu_{i}=\overline{\lambda_{i}}+2n(n+1)$. By \eqref{cy-universal-3} and \eqref{recursion}, we yield

\begin{equation}\begin{aligned}\label{cy-upper-1}
\overline{\lambda}_{k+1}+2n(n+1)\leq C_{0}(n)(\overline{\lambda}_{1}+2n(n+1))(k+1)^{\frac{1}{n}},
\end{aligned}\end{equation} where $C_{0}(n)$ is the same as the one in \eqref{Cheng-Yang-ineq}.
Therefore, we have
\begin{equation}\begin{aligned}\label{sum-pq}
\int_{M^{n}}\sum^{n+2}_{s,t=1}\left(\|u_{i}\nabla \mathcal{G}_{s\overline{t}}\|^{2}+\|u_{i}\nabla \mathcal{F}_{s\overline{t}}\|^{2}\right)dv
=16\sum^{n}_{s=1}\frac{\theta_{s}^{2}}{\theta_{n+2}^{2}}\left(\sum^{n}_{s=1}\frac{\theta_{s}^{2}}{\theta_{n+2}^{2}} + 1\right)+\dfrac{8\overline{\theta}^{2}}{\theta^{2}_{n+2}}\overline{\lambda}_{i}.
\end{aligned}\end{equation}  We choose $n+2$ positive real numbers $\theta_{s}$, such that, for all $s=1,2,\cdots,n+2$,
\begin{equation}\begin{aligned}\label{5.49} \sum_{s,t=1}^{n+1}\left[\int2 u_{i}\langle\nabla \mathcal{G}_{s\overline{t}},\nabla u_{i}\rangle\Delta  \mathcal{G}_{s\overline{t}}
dv+\int2 u_{i}\langle\nabla \mathcal{F}_{s\overline{t}},\nabla u_{i}\rangle\Delta  \mathcal{F}_{s\overline{t}}
dv\right]&=0,\end{aligned}\end{equation} and
\begin{equation*}a_{s\overline{t}}^{2}=\|\nabla \mathcal{G}_{s\overline{t}}u_{i}\|^{2}+\|\nabla \mathcal{F}_{s\overline{t}}u_{i}\|^{2}
\geq\sqrt{\||\nabla \mathcal{G}_{s\overline{t}}|^{2}u_{i}\|^{2}}+\sqrt{\||\nabla \mathcal{F}_{s\overline{t}}|^{2}u_{i}\|^{2}}=b_{s\overline{t}}\geq0.\end{equation*}
Let $$\beta=\min_{1\leq s,t\leq n+2}\{b_{s\overline{t}}\},$$
and $l=n+2$. Then, according to lemma \ref{lem2.1}, it follows from
\eqref{general-formula-2}, \eqref{sum-pq} and \eqref{5.49} that
\begin{equation*}\begin{aligned}\left[2\sum^{n}_{s=1}\frac{\theta_{s}^{2}}{\theta_{n+1}^{2}}+\frac{1}{2}(n+1)^{2}\beta\right]\left(\overline{\lambda}_{k+2}-\overline{\lambda}_{k+1}\right)^{2}
\leq4(\lambda_{k+2}+\rho)\left\{16\left[\left(\sum^{n}_{s=1}\frac{\theta_{s}^{2}}{\theta_{n+2}^{2}}\right)^{2}+\sum^{n}_{s=1}\frac{\theta_{s}^{4}}{\theta_{n+2}^{4}}\right]
+\dfrac{8\overline{\theta}^{2}}{\theta^{2}_{n+2}}\overline{\lambda}_{i}\right\},
\end{aligned}\end{equation*}for any $i,~i=1,2,\cdots,k$. Therefore, by the above inequality, we obtain

 \begin{equation}\begin{aligned}\label{po-gap}\overline{\lambda}_{k+2}-\overline{\lambda}_{k+1}
\leq\sqrt{\frac{4(\lambda_{k+2}+\rho)\left\{16\left[\left(\sum^{n}_{s=1}\frac{\theta_{s}^{2}}{\theta_{n+2}^{2}}\right)^{2}+\sum^{n}_{s=1}\frac{\theta_{s}^{4}}{\theta_{n+2}^{4}}\right]
+\dfrac{8\overline{\theta}^{2}}{\theta^{2}_{n+2}}\overline{\lambda}_{1}\right\}}
{2\sum^{n}_{s=1}\frac{\theta_{s}^{2}}{\theta_{n+1}^{2}}+\frac{1}{2}(n+1)^{2}\beta}}.
\end{aligned}\end{equation}
Furthermore, we put  $\rho=2n(n+1)$. Then, synthesizing \eqref{po-gap} and \eqref{cy-upper-1}, we obtain

\begin{equation*}\begin{aligned}\overline{\lambda}_{k+2}-\overline{\lambda}_{k+1}
&\leq\sqrt{\frac{4(\lambda_{k+2}+2n(n+1))\left\{16\left[\left(\sum^{n}_{s=1}\frac{\theta_{s}^{2}}{\theta_{n+2}^{2}}\right)^{2}+\sum^{n}_{s=1}\frac{\theta_{s}^{4}}{\theta_{n+2}^{4}}\right]
+\dfrac{8\overline{\theta}^{2}}{\theta^{2}_{n+2}}\overline{\lambda}_{1}\right\}}
{2\sum^{n}_{s=1}\frac{\theta_{s}^{2}}{\theta_{n+1}^{2}}+\frac{1}{2}(n+1)^{2}\beta}}\\
&\leq4\sqrt{\frac{4\left[\left(\sum^{n}_{s=1}\frac{\theta_{s}^{2}}{\theta_{n+2}^{2}}\right)^{2}+\sum^{n}_{s=1}\frac{\theta_{s}^{4}}{\theta_{n+2}^{4}}\right]
+\dfrac{2\overline{\theta}^{2}}{\theta^{2}_{n+2}}\overline{\lambda}_{1}}
{2\sum^{n}_{s=1}\frac{\theta_{s}^{2}}{\theta_{n+1}^{2}}+\frac{1}{2}(n+1)^{2}\beta}}\cdot\sqrt{C_{0}(n)(\overline{\lambda}_{1}+2n(n+1))}\cdot(k+1)^{\frac{1}{2n}}\\
&=C(n,M^{n})(k+1)^{\frac{1}{2n}},
\end{aligned}\end{equation*} where

\begin{equation*}C(n,M^{n})=\sqrt{\frac{4\left[\left(\sum^{n}_{s=1}\frac{\theta_{s}^{2}}{\theta_{n+2}^{2}}\right)^{2}+\sum^{n}_{s=1}\frac{\theta_{s}^{4}}{\theta_{n+2}^{4}}\right]
+\dfrac{2\overline{\theta}^{2}}{\theta^{2}_{n+2}}\overline{\lambda}_{1}}
{2\sum^{n}_{s=1}\frac{\theta_{s}^{2}}{\theta_{n+1}^{2}}+\frac{1}{2}(n+1)^{2}\beta}}\cdot\sqrt{C_{0}(n)(\overline{\lambda}_{1}+2n(n+1))}.\end{equation*}
Therefore, we finish the proof of this theorem.

\end{proof}
\begin{rem}In the proofs of theorem \ref{thm5.1} and theorem \ref{thm5.2}, the calculations of inequality \eqref{sys-5} and  inequality \eqref{z-a} is the same as in \cite{CY2}.\end{rem}

\section{Eigenvalues on Compact Homogeneous Riemannian Manifolds}\label{sec6}

In this section, we investigate the eigenvalue of the Laplacian on the compact homogeneous Riemannian manifolds. More specifically, we prove
the following theorem.
\begin{thm}\label{thm-homo}
Let $M^{n}$ be an $n$-dimensional compact homogeneous Riemannian
manifold without boundary.   If $\lambda_{i}$ , $ = 0, 1, 2,\cdots,$ is the $i$-th eigenvalue of
the closed eigenvalue problem \eqref{Eigen-Prob-closed}, then

\begin{equation*}\begin{aligned}\overline{\lambda}_{k+1}-\overline{\lambda}_{k}\leq
\overline{C}_{n,M^{n}}(k+1),\end{aligned}
\end{equation*}
where $$\overline{C}(n,M^{n})=4\sqrt{\frac{\overline{\alpha}^{2}\left[4\overline{\lambda}_{1}\sigma^{2}\overline{\lambda}_{1} +\overline{\lambda}^{2}_{1}
\sigma^{2}\right]}{\alpha\sigma^{2}\overline{\lambda}_{1}+d\beta}}\cdot\frac{\sqrt{5C_{0}(n)\overline{\lambda}_{1}}}{2}.$$\end{thm}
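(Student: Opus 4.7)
The plan is to mimic the pattern established in the proofs of Theorems \ref{thm1.1} and \ref{thm1.2}, but to replace the extrinsic coordinate functions $y^{\alpha}$ (supplied there by Lemma \ref{lem3.1}) with a canonical family of first eigenfunctions that encodes the homogeneity of $M^{n}$. The crucial input is the classical fact that on any compact homogeneous Riemannian manifold there exist first eigenfunctions $\{f_{j}\}_{j=1}^{d}$, where $d$ is the multiplicity of $\overline{\lambda}_{1}$, satisfying
\begin{equation*}
\Delta f_{j}=-\overline{\lambda}_{1}f_{j},\qquad \sum_{j=1}^{d}f_{j}^{2}\equiv \sigma^{2},\qquad \sum_{j=1}^{d}|\nabla f_{j}|^{2}\equiv \sigma^{2}\overline{\lambda}_{1},
\end{equation*}
for a positive constant $\sigma$. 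These are the homogeneous counterparts of the three identities in Lemma \ref{lem3.1}, and together they make the homogeneous setting amenable to the same style of trial-function argument.

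Following the scaling philosophy of Theorem \ref{thm1.1}, I would take the trial functions to be $h_{j}=\alpha_{j}f_{j}$ with positive scalars $\alpha_{j}$ chosen (exactly as in the remark after Theorem \ref{thm1.1}) so that simultaneously $a_{j}^{2}\geq b_{j}$ for every $j$ and $\sum_{j}\int 2u_{i}\langle\nabla h_{j},\nabla u_{i}\rangle\Delta h_{j}\,dv=0$; the orthogonality condition is a single linear constraint on the $\alpha_{j}^{2}$ and is easily arranged. Setting $\alpha=\min_{j}\alpha_{j}$, $\overline{\alpha}=\max_{j}\alpha_{j}$ and $\beta=\min_{j}b_{j}$, the homogeneity identities give the lower bound
\begin{equation*}
\sum_{j=1}^{d}\frac{a_{j}^{2}+b_{j}}{2}\geq \frac{1}{2}\bigl(\alpha\sigma^{2}\overline{\lambda}_{1}+d\beta\bigr),
\end{equation*}
which is precisely the denominator appearing in $\overline{C}(n,M^{n})$.

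For the right-hand side of inequality \eqref{general-formula-4} I would use $u_{i}\Delta h_{j}=-\alpha_{j}\overline{\lambda}_{1}u_{i}f_{j}$ to expand
\begin{equation*}
\sum_{j=1}^{d}\|2\langle\nabla h_{j},\nabla u_{i}\rangle+u_{i}\Delta h_{j}\|^{2}
=4\sum_{j=1}^{d}\int\langle\nabla h_{j},\nabla u_{i}\rangle^{2}dv
+\overline{\lambda}_{1}^{2}\sum_{j=1}^{d}\int u_{i}^{2}h_{j}^{2}dv,
\end{equation*}
the cross term vanishing by the orthogonality condition already imposed. Direct application of the two homogeneity identities bounds this expression above by $\overline{\alpha}^{2}\bigl(4\sigma^{2}\overline{\lambda}_{i}+\overline{\lambda}_{1}^{2}\sigma^{2}\bigr)$. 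Setting $i=1$ and plugging into Lemma \ref{lem2.4} with a suitable $\rho\geq 0$ produces
\begin{equation*}
\bigl(\overline{\lambda}_{k+2}-\overline{\lambda}_{k+1}\bigr)^{2}\leq \frac{8\overline{\alpha}^{2}\bigl(4\overline{\lambda}_{1}\sigma^{2}\overline{\lambda}_{1}+\overline{\lambda}_{1}^{2}\sigma^{2}\bigr)}{\alpha\sigma^{2}\overline{\lambda}_{1}+d\beta}\bigl(\overline{\lambda}_{k+2}+\rho\bigr),
\end{equation*}
which already matches the structure of the stated constant.

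The final step is to dispose of the factor $\sqrt{\overline{\lambda}_{k+2}+\rho}$. For this I would apply the Cheng--Yang recursion formula \eqref{recursion} to a Yang-type universal inequality on compact homogeneous manifolds (an inequality analogous to \eqref{hm-ineq-2} but quadratic in the differences $\overline{\lambda}_{k+1}-\overline{\lambda}_{i}$), producing a bound of the form $\overline{\lambda}_{k+1}+\rho\leq \tfrac{5}{4}C_{0}(n)\overline{\lambda}_{1}(k+1)^{2/n}$; taking square roots then recovers the constant $\tfrac{1}{2}\sqrt{5C_{0}(n)\overline{\lambda}_{1}}$. I expect the main obstacle to lie precisely in this last step: the recursion \eqref{recursion} requires a Yang-type quadratic inequality of the exact form $\sum(\mu_{k+1}-\mu_{i})^{2}\leq \tfrac{4}{n}\sum\mu_{i}(\mu_{k+1}-\mu_{i})$, so before invoking the recursion one first has to promote the Harrell--Michel estimate \eqref{hm-ineq-2} to such a Yang-type quadratic universal inequality on compact homogeneous manifolds, verifying that the $\overline{\lambda}_{1}$ contribution can be absorbed into the $\mu_{i}=\overline{\lambda}_{i}+\rho$ shift with $\rho$ chosen compatibly.
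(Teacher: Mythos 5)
Your overall strategy matches the paper's almost exactly: you correctly identify that the Euclidean coordinate identities of Lemma \ref{lem3.1} must be replaced by the identities $\sum_p g_p^2\equiv\sigma^2$, $\sum_p g_p\nabla g_p\equiv 0$, $\sum_p|\nabla g_p|^2\equiv\overline{\lambda}_1\sigma^2$ for an orthonormal basis $\{g_p\}$ of the first eigenspace (this is precisely Proposition~1 of Li \cite{L}, invoked in the paper); you take the same scaled trial functions $h_p=\alpha_p g_p$ subject to $a_p^2\geq b_p$ and $\sum_p\int 2u_i\langle\nabla h_p,\nabla u_i\rangle\Delta h_p\,dv=0$; and your lower bound $\tfrac12(\alpha\sigma^2\overline{\lambda}_1+d\beta)$ on $\sum_p(a_p^2+b_p)/2$ and your estimate on $\sum_p\|2\langle\nabla h_p,\nabla u_i\rangle+u_i\Delta h_p\|^2$ reproduce the paper's (a-b-20) and (2.31). (One small divergence: you cancel the cross term exactly using the imposed orthogonality, while the paper instead discards it via $(2A-\overline{\lambda}_1 B)^2\leq 2(4A^2+\overline{\lambda}_1^2B^2)$, paying an extra factor of~$2$; your version is actually a little sharper, and changes the constant by~$\sqrt{2}$, but the structure is identical.)

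The genuine gap is in your final step. You expect the Yang-type quadratic inequality on compact homogeneous manifolds to produce $\overline{\lambda}_{k+1}+\rho\leq\tfrac{5}{4}C_0(n)\overline{\lambda}_1(k+1)^{2/n}$, hence a gap bound proportional to $(k+1)^{1/n}$. But this is not what the theorem claims and not what the argument yields. The Cheng--Yang universal inequality for homogeneous manifolds used in the paper, namely
\begin{equation*}
\sum_{i=0}^{k}(\overline{\lambda}_{k+1}-\overline{\lambda}_{i})^{2}\leq 4\sum_{i=0}^{k}(\overline{\lambda}_{k+1}-\overline{\lambda}_{i})\Bigl(\overline{\lambda}_{i}+\tfrac{1}{4}\overline{\lambda}_{1}\Bigr),
\end{equation*}
has coefficient $4$, not $4/n$. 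Feeding $\mu_i=\overline{\lambda}_i+\tfrac14\overline{\lambda}_1$ into the recursion \eqref{recursion} therefore operates with the effective parameter $n=1$, and the paper accordingly derives $\overline{\lambda}_{k+1}\leq C_0(n)(k+1)^{2}\overline{\lambda}_1$ -- note the exponent $2$, not $2/n$. After the square root the gap bound is linear in $(k+1)$, exactly as the theorem states ($\overline{\lambda}_{k+1}-\overline{\lambda}_k\leq\overline{C}_{n,M^n}(k+1)$), in contrast to the $k^{1/n}$ growth in Theorems \ref{thm1.1} and \ref{thm1.2}. You flagged this step as the ``main obstacle,'' but the needed quadratic universal inequality is already in Cheng--Yang \cite{CY2}; the thing you actually missed is that its coefficient is dimension-independent, which is precisely why the homogeneous theorem has a much weaker (linear-in-$k$) growth rate than the immersed cases.
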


\begin{proof}
Recall that Cheng and Yang \cite{CY2} proved the following

\begin{equation}\label{c-y-univer}\sum^{k}_{i=0}(\overline{\lambda}_{k+1}-\overline{\lambda}_{i})^{2} \leq 4\sum^{k}_{i=0}
(\overline{\lambda}_{k+1}-\overline{\lambda}_{i})(\overline{\lambda}_{i}+\frac{1}{4}\overline{\lambda}_{1}).\end{equation}
By the \eqref{recursion} and \eqref{c-y-univer}, we obtian

\begin{equation}\label{c-y-upper}\overline{\lambda}_{k+1}\leq C_{0}(n)(k+1)^{2}\overline{\lambda}_{1}.\end{equation}
We assume that $\{g_{p}\}^{l}  _{p=1}$ is an orthonormal basis corresponding to the first eigenspace
 $E_{\overline{\lambda}_{1}}$ of the eigenvlaue problem

 \begin{equation*}\begin{aligned}
\Delta f =-\overline{\lambda}f, \ \ {\rm on}\ \ M.\end{aligned}\end{equation*}
It is equivalent to say that, the orthonormal basis $\{g_{p}\}^{l}_{p=1}$ satisfies the following

 \begin{equation}\begin{aligned}\label{eigen-first}\Delta g_{p} =-\overline{\lambda}_{1}g_{p},\ \ {\rm on}\ \  M.
\end{aligned}\end{equation}
Since  $M$ is an $n$-dimensional compact homogeneous Riemannian manifold without
boundary, we know that

 \begin{equation*}\begin{aligned}\sum^{l}_{p=1}g^{2}_{p}=\sigma^{2}
\end{aligned}\end{equation*} is constant (cf. Proposition 1 of Li
\cite{L}). Since the sum

 \begin{equation*}\begin{aligned}\sum^{l}_{p=1} g^{2}_{p}
=\sigma^{2}
\end{aligned}\end{equation*}   is a constant, we infer

 \begin{equation}\begin{aligned}&\label{g} \sum^{l}_{p=1}
g_{p}\nabla g_{p} = 0,\\&
\sum^{l}_{p=1}
\nabla g_{p}\cdot\nabla g_{p}=-
\sum^{l}_{p=1}
g_{p}\Delta g_{p} = \overline{\lambda}_{1}\sigma^{2}.
\end{aligned}\end{equation}
Hence, we infer from \eqref{g}

 \begin{equation}\begin{aligned}\label{g-1}\sum^{l}_{p=1}
\|u_{i}\nabla g_{p}\|^{2} =\overline{\lambda}_{1}\sigma^{2}.
\end{aligned}\end{equation}Let $\alpha_{1},\alpha_{2},\cdots,\alpha_{d}$ are $d$ positive numbers. We define $d$ scarling eigenfunctions $h_{p}(x)=\alpha_{p}g_{p}$, such that
\begin{equation}\begin{aligned}\label{su-0} \sum_{p=1}^{l}\int2 u_{i}\langle\nabla h_{p},\nabla u_{i}\rangle\Delta  h_{p}dv&=0,\end{aligned}\end{equation}
and
\begin{equation*}a_{p}^{2}=\|\nabla h_{p}u_{i}\|^{2}\geq\sqrt{\||\nabla h_{p}|^{2}u_{i}\|^{2}}=b_{p}\geq0,\end{equation*} where   $p=1,2,\cdots,d$.
Let $$\alpha=\min_{1\leq j\leq d}\{\alpha_{j}\},$$$$\overline{\alpha}=\max_{1\leq j\leq d}\{\alpha_{j}\},$$$$\beta=\min_{1\leq j\leq d}\{b_{j}\},$$
and $l=n+p$, then, by lemma \ref{lem2.3} and \eqref{g-1}, we have

\begin{equation}\begin{aligned}\label{a-b-20}\sum_{j=1}^{l}\frac{a_{j}^{2}+b_{j}}{2}
&=\sum^{l}_{p=1}
\frac{\|u_{i}\nabla(\alpha_{p} g_{p})\|^{2} +\sqrt{\||\nabla (\alpha_{p}g_{p})|^{2}u_{i}\|^{2}}}{2}\\&
\geq\frac{\alpha\sigma^{2}\overline{\lambda}_{1}+d\beta}{2}.
\end{aligned}\end{equation}
Furthermore, by \eqref{eigen-first}, one can deduce that

 \begin{equation}\begin{aligned}\label{(2.30)}\sum_{p=1}^{d}\|2\langle\nabla h_{p},\nabla u_{i}\rangle+u_{i}\Delta h_{p}\|^{2}&
=\sum_{p=1}^{d}\|2\langle\nabla (\alpha_{p}g_{p}),\nabla u_{i}\rangle+u_{i}\Delta (\alpha_{p}g_{p})\|^{2}\\
& =
\sum^{d}_{p=1}\int
_{M}
\left\{4\alpha_{p}^{2}(\nabla g_{p}\cdot\nabla u_{i})^{2}-4\overline{\lambda}_{1}g_{p}u_{i}\alpha_{p}^{2}
\nabla g_{p}\cdot\nabla u_{i} +\overline{\lambda}^{2}_{1}\alpha_{p}^{2}g^{2}_{p}u^{2}_{i}\right\}dv\\
&\leq2\sum^{d}_{p=1}\int
_{M}
\left\{4\alpha_{p}^{2}(\nabla g_{p}\cdot\nabla u_{i})^{2} +\overline{\lambda}^{2}_{1}\alpha_{p}^{2}g^{2}_{p}u^{2}_{i}\right\}dv\\
&\leq2\overline{\alpha}^{2}\left[\int_{M}
\sum^{d}_{p=1}
4(\nabla g_{p} \cdot\nabla u_{i})^{2}dv +\overline{\lambda}^{2}_{1}\sigma^{2}\right].
\end{aligned}\end{equation}
Since

 \begin{equation*}\begin{aligned}\langle\nabla g_{p},\nabla u_{i}\rangle^{2}\leq|\nabla g_{p}|^{2}|\nabla u_{i}|^{2}
\end{aligned}\end{equation*}  and

\begin{equation}\begin{aligned}\label{l-gp}\sum^{d}_{p=1}
|\nabla g_{p}|^{2} =\overline{\lambda}_{1}\sigma^{2},
\end{aligned}\end{equation}
we infer
from \eqref{su-0}, \eqref{(2.30)} and \eqref{l-gp} that

\begin{equation}\begin{aligned}\label{(2.31)}\sum_{p=1}^{d}\|2\langle\nabla h_{p},\nabla u_{i}\rangle+u_{i}\Delta h_{p}\|^{2}&\leq2\overline{\alpha}^{2}\left[\int_{M}
4\overline{\lambda}_{1}\sigma^{2}|\nabla u_{i} |^{2}dv+\overline{\lambda}^{2}_{1}
\sigma^{2}\right]
\\&= 2\overline{\alpha}^{2}\left[4\overline{\lambda}_{1}\sigma^{2}\overline{\lambda}_{i} +\overline{\lambda}^{2}_{1}
\sigma^{2}\right].
\end{aligned}\end{equation}
Since $\overline{\lambda}_{1}\alpha^{2}$ is positive, i.e., $\overline{\lambda}_{1}\alpha^{2} > 0,$ by making use of \eqref{general-formula-4}, \eqref{a-b-20}, \eqref{l-gp} and \eqref{(2.31)}, we have

\begin{equation}\begin{aligned}\label{ga1}\frac{\alpha\sigma^{2}\overline{\lambda}_{1}+d\beta}{2}\left(\overline{\lambda}_{k+2}-\overline{\lambda}_{k+1}\right)^{2}
\leq8(\overline{\lambda}_{k+2}+\rho)\overline{\alpha}^{2}\left[4\overline{\lambda}_{1}\sigma^{2}\overline{\lambda}_{i} +\overline{\lambda}^{2}_{1}
\sigma^{2}\right],
\end{aligned}\end{equation}for any $i=1,2,\cdots, k.$ Let $$\rho=\frac{1}{4}\overline{\lambda}_{1}.$$ Then, by using \eqref{c-y-upper} and \eqref{ga1}, one can infer that

\begin{equation}\begin{aligned}\overline{\lambda}_{k+2}-\overline{\lambda}_{k+1}&
\leq4\sqrt{\frac{\left(\overline{\lambda}_{k+2}+\frac{1}{4}\overline{\lambda}_{1}\right)\overline{\alpha}^{2}\left[4\overline{\lambda}_{1}\sigma^{2}\overline{\lambda}_{1} +\overline{\lambda}^{2}_{1}
\sigma^{2}\right]}{\alpha\sigma^{2}\overline{\lambda}_{1}+d\beta}}\\&
\leq4\sqrt{\frac{\overline{\alpha}^{2}\left[4\overline{\lambda}_{1}\sigma^{2}\overline{\lambda}_{1} +\overline{\lambda}^{2}_{1}
\sigma^{2}\right]}{\alpha\sigma^{2}\overline{\lambda}_{1}+d\beta}}\cdot\frac{\sqrt{5C_{0}(n)\overline{\lambda}_{1}}}{2}\cdot(k+2)\\&=
C(n,M^{n})(k+2),
\end{aligned}\end{equation}where $$C(n,M^{n})=4\sqrt{\frac{\overline{\alpha}^{2}\left[4\overline{\lambda}_{1}\sigma^{2}\overline{\lambda}_{1} +\overline{\lambda}^{2}_{1}
\sigma^{2}\right]}{\alpha\sigma^{2}\overline{\lambda}_{1}+d\beta}}\cdot\frac{\sqrt{5C_{0}(n)\overline{\lambda}_{1}}}{2}.$$
Thus, we finish the proof of this theorem.
\end{proof}

\section{Gap Coefficients}\label{sec7}
In theorem \ref{thm1.1}, the best constant $C_{n,\Omega}$ is called the gap coefficient. In this section,
we pay  attention to investigating the gap coefficient $C_{n,\Omega}$. It is worth noting
that it is very difficult for us to give the explicit expression of the optimal gap coefficient, even if $\Omega$ are some special domains in
the Euclidean space with  dimension $n$. However, we find that the eigenvalues depend on the shape of the bounded
domain $\Omega\subset\mathbb{R}^{n}$. Therefore, we introduce two new notations which will play significant roles in the estimating for the eigenvalues.

\begin{defn}Assume that $\Sigma_{1}$ and $\Sigma_{2}$ are two cubes  in $\mathbb{R}^{n}$, where $n\geq2,$ such that $\Sigma_{1}\subset\Omega\subset\Sigma_{2}$. We define

\begin{equation*}\mathfrak{S}_{1}(\Omega)=
\left\{ \begin{aligned}  \sup_{\Sigma_{1}\subset\mathbb{R}^{n},\Sigma_{2}\subset\mathbb{R}^{n}}\frac{d_{1}^{2}}{d_{2}^{2}},\ \ &n\geq2,
         \\
1,\ \ &n=1,
                          \end{aligned} \right.
                          \end{equation*}
and call it the first shape coefficient,  where $d_{1}$ denotes the length of the side of the cube $\Sigma_{1}$ and $d_{2}$ denotes the length of the side of the cube $\Sigma_{2}$, respectively.
Assume that $\mathbb{B}_{1}$ and $\mathbb{B}_{2}$ are two balls  in $\mathbb{R}^{n}$ such that $\mathbb{B}_{1}\subset\Omega\subset\mathbb{B}_{2}$. We define

\begin{equation*}\mathfrak{S}_{2}(\Omega)=
\left\{ \begin{aligned}  \sup_{\mathbb{B}_{1}\subset\mathbb{R}^{n},\mathbb{B}_{2}\subset\mathbb{R}^{n}}\frac{r_{1}^{2}}{r_{2}^{2}},\ \ &n\geq2,
         \\
1,\ \ &n=1,
                          \end{aligned} \right.
\end{equation*} and call it the second shape coefficient,  where $r_{1}$ denotes diameter  of the ball $\mathbb{B}_{1}$ and $r_{2}$ denotes diameter  of the ball $\mathbb{B}_{2}$, respectively.
\end{defn}
\begin{rem}When $\Omega$ is a cube then, $\Sigma_{1}=\Sigma_{2}$, therefore, we have
$\mathfrak{S}_{1}(\Omega)=1$. Similarly, when $\Omega$ is a ball, then $\mathbb{B}_{1}=\mathbb{B}_{2}$, therefore, we have
$\mathfrak{S}_{2}(\Omega)=1$.\end{rem}
According to a great amount of  numeric calculation for some special examples,  we venture to propose the following:
\begin{con}\label{gap-conj-z}
Let $\Omega$ be a bounded domain with piecewise smooth boundary $\partial\Omega$ on an $n$-dimensional Euclidean space
$\mathbb{R}^{n}$.  If $\lambda_{i}$ is the $i$-th eigenvalue of Dirichlet problem \eqref{Eigenvalue-Problem}, then, for any positive integer $k$,

\begin{equation}\label{con-z-1} \lambda_{k+1}-\lambda_{k}\leq \mathfrak{S}_{1}(\Omega)(\lambda_{2}-\lambda_{1})k^{\frac{1}{n}}.
\end{equation}
\end{con}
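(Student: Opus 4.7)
The plan is to apply the general formula of Lemma~\ref{lem2.3} with a specially chosen family of centered, rescaled coordinate functions, and to combine two ingredients: a Payne--P\'olya--Weinberger type orthogonality trick that promotes the factor $\lambda_{1}$ appearing in Theorem~\ref{thm1.1} to the spectral gap $\lambda_{2}-\lambda_{1}$, and a domain-inclusion comparison that produces the shape coefficient $\mathfrak{S}_{1}(\Omega)$. The $k^{1/n}$ factor will then be supplied, as in the proofs of Theorems~\ref{thm1.1} and \ref{thm5.1}, by the Cheng--Yang recursion bound $\lambda_{k+1}\le C_{0}(n)k^{2/n}\lambda_{1}$ applied to $\sqrt{\lambda_{k+2}+\rho}$.

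Concretely, I would align the coordinate axes with the edges of a maximal inscribed cube $\Sigma_{1}\subset\Omega$ of side $d_{1}$ and write $\Omega\subset\Sigma_{2}$ with $\Sigma_{2}$ the minimal circumscribing cube of side $d_{2}$. I then take $h_{j}(x)=\alpha_{j}(x^{j}-c^{j})$ for $j=1,\dots,n$, where $\alpha_{j}\sim 1/d_{2}$ sets the scale and $c^{j}:=\int_{\Omega}x^{j}u_{1}^{2}\,dv$ is the $u_{1}^{2}$-weighted center of mass, so that
\[
\int_{\Omega}h_{j}u_{1}^{2}\,dv=0.
\]
This orthogonality is exactly the hypothesis needed to invoke the classical PPW inequality
\[
\int_{\Omega}u_{1}^{2}|\nabla h_{j}|^{2}\,dv\;\ge\;(\lambda_{2}-\lambda_{1})\int_{\Omega}h_{j}^{2}u_{1}^{2}\,dv,
\]
which is essentially the only mechanism in the variational theory that produces $\lambda_{2}-\lambda_{1}$ directly. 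The shape coefficient should enter when comparing $\sum_{j}\int h_{j}^{2}u_{1}^{2}\,dv$ (which, up to normalization, probes the $d_{2}^{-2}$-scale) against the lower bound $\sum_{j}a_{j}^{2}+b_{j}$ in Lemma~\ref{lem2.3} (which probes the inscribed $d_{1}^{-2}$-scale via $u_{1}$'s concentration on $\Sigma_{1}$), so that the ratio $d_{1}^{2}/d_{2}^{2}=\mathfrak{S}_{1}(\Omega)$ appears naturally in the final quotient.

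Next I would feed these $h_{j}$ into the general formula \eqref{general-formula-2} with $i=1$ and $\rho$ of order $\lambda_{1}$, evaluate $\sum_{j}\|2\langle\nabla h_{j},\nabla u_{1}\rangle+u_{1}\Delta h_{j}\|^{2}=4\sum_{j}\alpha_{j}^{2}\int(\partial_{j}u_{1})^{2}\,dv$, and convert the resulting $\int(\partial_{j}u_{1})^{2}$ by integration by parts together with the centered orthogonality $\int h_{j}u_{1}^{2}=0$ into a quantity bounded by $(\lambda_{2}-\lambda_{1})$ via the PPW step above. Balancing this with $\sqrt{\lambda_{k+2}+\rho}\le\sqrt{C_{0}(n)}\,k^{1/n}\sqrt{\lambda_{1}+\rho}$ and unwinding the prefactors as in the proof of Theorem~\ref{thm1.1} would then yield precisely the conjectured bound $\mathfrak{S}_{1}(\Omega)(\lambda_{2}-\lambda_{1})k^{1/n}$.

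The main obstacle, and the reason this statement is still only a conjecture, is a mismatch in the bilinear forms being estimated: the PPW inequality gives control of $\int u_{1}^{2}|\nabla h|^{2}$, while Lemma~\ref{lem2.3} forces one to control $\int\langle\nabla h,\nabla u_{1}\rangle^{2}$, which is the analytically harder quantity. Bridging this gap cleanly for \emph{all} $k$, without losing a multiplicative constant larger than $\mathfrak{S}_{1}(\Omega)$, seems to require a new universal inequality of Hile--Protter or Yang type that is intrinsically adapted to orthogonality against $u_{1}$, most likely realized by a multi-parameter family of trial functions indexed by pairs $(i,j)$ with simultaneous centering conditions and simultaneous Rayleigh--Ritz projections against $u_{1},\dots,u_{k}$. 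Organizing this combinatorics so that the resulting constant collapses to exactly $\mathfrak{S}_{1}(\Omega)$ rather than some weaker geometric invariant is, in my view, the genuine analytic difficulty and the reason only strong numerical evidence, rather than a proof, is offered here.
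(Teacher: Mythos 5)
There is no proof of this statement in the paper for you to match: \eqref{con-z-1} is stated as a \emph{conjecture}, and the paper supports it only with special cases and numerics --- the interval (all $k$, $n=1$), the cube and the equilateral triangle for $k\leq 100$ by direct computation of the explicit spectra, and remarks linking it to the Payne--P\'olya--Weinberger ratio conjecture; indeed the acknowledgment records that a counterexample to an earlier version of the conjecture was communicated to the author, which shows how delicate the precise constant is. Your submission is likewise not a proof but a strategy sketch, and you correctly say so; to that extent your self-assessment is accurate. The concrete gap, however, is more structural than the ``mismatch of bilinear forms'' you name. In the machinery of Lemma \ref{lem2.3} the right-hand side of \eqref{general-formula-2} is $4(\lambda_{k+2}+\rho)\sum_{j}\|2\langle\nabla h_{j},\nabla u_{i}\rangle+u_{i}\Delta h_{j}\|^{2}$, and with your centered linear trial functions this collapses (up to the scaling factors $\alpha_{j}$) to a multiple of $\lambda_{1}$, exactly as in the proof of Theorem \ref{thm1.1}. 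The PPW-type inequality you invoke, $\int_{\Omega}u_{1}^{2}|\nabla h_{j}|^{2}\,dv\geq(\lambda_{2}-\lambda_{1})\int_{\Omega}h_{j}^{2}u_{1}^{2}\,dv$, points in the wrong direction for your purpose: it bounds $\lambda_{2}-\lambda_{1}$ \emph{above} by a Rayleigh quotient, whereas to obtain \eqref{con-z-1} you would need to bound the trial-function energy \emph{above} by $(\lambda_{2}-\lambda_{1})$ times a quantity controlled by the denominator in Lemma \ref{lem2.3}. No such reversal can hold with a constant of the form $\mathfrak{S}_{1}(\Omega)$: on dumbbell-type domains (two congruent cubes joined by a thin neck) $\lambda_{2}-\lambda_{1}$ tends to zero while $\lambda_{1}$, the higher gaps, and the shape coefficient $\mathfrak{S}_{1}(\Omega)$ all stay bounded away from zero, so any argument that first reduces to a bound by a multiple of $\lambda_{1}$ --- which is what the Chen--Zheng--Yang/Cheng--Yang machinery delivers --- cannot be repaired afterwards to produce the factor $\lambda_{2}-\lambda_{1}$. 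Likewise the claim that the ratio $d_{1}^{2}/d_{2}^{2}$ ``appears naturally'' from comparing the two scales is asserted rather than derived; nothing in Lemma \ref{lem2.3} localizes $u_{1}$ to the inscribed cube. So the honest summary is: the statement is open, the paper offers evidence rather than a proof, and your sketch, while sensibly organized around the only known mechanisms, does not close (and could not close, along these lines) the essential gap.
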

\begin{con}\label{gap-conj-z}
Let $\Omega$ be a bounded domain with piecewise smooth boundary $\partial\Omega$ on an $n$-dimensional Euclidean space
$\mathbb{R}^{n}$.  If $\lambda_{i}$ is the $i$-th eigenvalue of Dirichlet problem \eqref{Eigenvalue-Problem}, then, for any positive integer $k$,

\begin{equation}\label{con-z-1'} \lambda_{k+1}-\lambda_{k}\leq \mathfrak{S}_{2}(\Omega)(\lambda_{2}-\lambda_{1})k^{\frac{1}{n}}.
\end{equation}
\end{con}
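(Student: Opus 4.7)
The plan is to approach the conjectured gap inequality in three stages: first handle the ball case by explicit Bessel-zero analysis; second use Dirichlet domain monotonicity relative to the inscribed and circumscribed balls that define $\mathfrak{S}_{2}(\Omega)$; and third, in the general Euclidean case, combine the scaled-coordinate machinery of Lemma \ref{lem2.3} with an Ashbaugh--Benguria style refinement to convert $\lambda_{1}$ into $\lambda_{2}-\lambda_{1}$. Fix throughout balls $\mathbb{B}_{1}\subset\Omega\subset\mathbb{B}_{2}$ of diameters $r_{1}\leq r_{2}$ realizing (or approximating) the supremum in the definition of $\mathfrak{S}_{2}(\Omega)$, so that $r_{1}^{2}/r_{2}^{2}=\mathfrak{S}_{2}(\Omega)$.

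First I would verify the conjecture on a ball, where $\mathfrak{S}_{2}=1$ reduces the statement to $\lambda_{k+1}-\lambda_{k}\leq(\lambda_{2}-\lambda_{1})k^{1/n}$. Since $\lambda_{k}(\mathbb{B}_{R})=\lambda_{k}(\mathbb{B}_{1})/R^{2}$, both sides rescale identically in $R$, so it suffices to treat the unit ball. There the Dirichlet eigenvalues are squares of positive Bessel zeros, $\lambda_{\ell,m}=j_{\ell+n/2-1,m}^{2}$, and the inequality can be read off from McMahon's asymptotic expansion $j_{\nu,m}\sim(m+\nu/2-1/4)\pi$, known two-sided bounds on consecutive Bessel zeros, and Weyl's law $\lambda_{k}\sim c_{n}k^{2/n}$.

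Next I would apply Lemma \ref{lem2.3} with the scaled coordinate trial functions $h_{j}(x)=\alpha_{j}x^{j}$. Since $\Delta h_{j}=0$ the cross-term condition is automatic, and a direct computation shows $a_{j}^{2}=b_{j}=\alpha_{j}^{2}$, so the constraint $a_{j}^{2}\geq b_{j}$ holds (as an equality). Substituting into \eqref{general-formula-2} with $i=1$ and $\rho=0$ gives
$$\sum_{j=1}^{n}\alpha_{j}^{2}(\lambda_{k+2}-\lambda_{k+1})^{2}\leq 16\,\overline{\alpha}^{2}\,\lambda_{k+2}\,\lambda_{1},$$
and the Cheng--Yang recursion $\lambda_{k+2}\leq C_{0}(n)(k+1)^{2/n}\lambda_{1}$ then yields an estimate of the form $C(n)\,\Lambda(\alpha)\,\lambda_{1}(\Omega)\,k^{1/n}$, where $\Lambda(\alpha)=\overline{\alpha}^{2}/\sum_{j}\alpha_{j}^{2}$ is a shape factor. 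I would tune the constants $\alpha_{j}$ together with Dirichlet monotonicity of $\lambda_{1}$ between $\mathbb{B}_{2}$, $\Omega$, and $\mathbb{B}_{1}$ so that the combined factor $\Lambda(\alpha)\,\lambda_{1}(\Omega)$ is dominated by $\mathfrak{S}_{2}(\Omega)(\lambda_{2}(\Omega)-\lambda_{1}(\Omega))$; the inscribed/circumscribed sphere picture is precisely what should produce the ratio $r_{1}^{2}/r_{2}^{2}$.

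The decisive and hardest step is this last conversion from $\lambda_{1}$ to $\lambda_{2}-\lambda_{1}$. The natural output of Lemma \ref{lem2.3} with coordinate trial functions carries $\lambda_{1}$; the known mechanism to produce the gap $\lambda_{2}-\lambda_{1}$ is the Ashbaugh--Benguria device of passing to trial functions $x^{j}u_{1}$ and testing against the orthogonal complement of $u_{1}$ in the span of the first $k+1$ eigenfunctions. I would accordingly rerun the proof of Lemma \ref{lem2.3} with $h_{j}=\alpha_{j}x^{j}u_{1}$, in which case $\Delta h_{j}=-2\lambda_{1}\alpha_{j}x^{j}u_{1}+2\alpha_{j}\partial_{j}u_{1}$ and the recursion restarts with $\lambda_{2}-\lambda_{1}$ at its base. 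The principal obstacle is that arranging the $\Omega$-dependence of the resulting constant to reduce exactly to $\mathfrak{S}_{2}(\Omega)$ is essentially equivalent to a generalized Payne--P\'olya--Weinberger inequality on multidimensional Euclidean domains, a widely open problem that the author himself flags in the introduction. A complete unconditional proof of the present conjecture may therefore have to await, or proceed in parallel with, progress on the generalized PPW conjecture, and a partial result capturing $\mathfrak{S}_{2}(\Omega)$ only up to a universal constant depending on $n$ seems the most realistic intermediate goal.
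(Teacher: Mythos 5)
You should first be clear about the status of this statement in the paper: it is a \emph{conjecture}, not a theorem, and the paper offers no proof of it --- only supporting evidence (the interval, the cube and the equilateral triangle checked numerically for $k\leq 100$, and the link to the Payne--P\'olya--Weinberger ratio bound \eqref{ppw-conj-1}). Your proposal, by your own admission in the last paragraph, also does not prove it; so the honest comparison is that both you and the author stop short of a proof, and the question is whether your sketch adds a viable route. It does not, for two concrete reasons. First, the ball case cannot be ``read off'' from McMahon's expansion and Weyl's law: asymptotics control neither every finite $k$ nor individual gaps, which can exceed the Weyl average precisely where Bessel zeros cluster with high angular multiplicity; note that even the paper, for the much more explicit cube and triangle spectra, only verifies the inequality numerically for $k\leq 100$ and flags a number-theoretic obstruction beyond that. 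Second, the tuning step in your middle stage is vacuous: with $h_{j}=\alpha_{j}x^{j}$ one has $a_{j}^{2}=b_{j}=\alpha_{j}^{2}$, and the factor you propose to optimize, $\Lambda(\alpha)=\overline{\alpha}^{2}/\sum_{j}\alpha_{j}^{2}$, is scale invariant and bounded below by $1/n$, attained when all $\alpha_{j}$ are equal; so Lemma \ref{lem2.3} with coordinate trial functions can never yield more than the Chen--Zheng--Yang bound \eqref{czy-1}, $\lambda_{k+1}-\lambda_{k}\leq 4\lambda_{1}\sqrt{C_{0}(n)/n}\,k^{1/n}$, in which neither $\lambda_{2}-\lambda_{1}$ nor $\mathfrak{S}_{2}(\Omega)$ appears. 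Moreover, Dirichlet domain monotonicity compares individual eigenvalues of $\Omega$, $\mathbb{B}_{1}$, $\mathbb{B}_{2}$, but gaps are differences, and monotonicity gives no inequality between $\lambda_{k+1}(\Omega)-\lambda_{k}(\Omega)$ and the corresponding ball quantities; so the inscribed/circumscribed picture does not by itself produce the ratio $r_{1}^{2}/r_{2}^{2}$.

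Your third stage correctly identifies the decisive difficulty --- converting $\lambda_{1}$ into $\lambda_{2}-\lambda_{1}$ --- but the proposed fix is only named, not carried out: rerunning Lemma \ref{lem2.3} with $h_{j}=\alpha_{j}x^{j}u_{1}$ runs into the unchecked constraint $a_{j}^{2}\geq b_{j}$ for these trial functions, and your computation of $\Delta(x^{j}u_{1})$ has a spurious factor ($\Delta(x^{j}u_{1})=-\lambda_{1}x^{j}u_{1}+2\partial_{j}u_{1}$, not $-2\lambda_{1}x^{j}u_{1}+2\partial_{j}u_{1}$); more importantly, as you note, making the resulting constant collapse to $\mathfrak{S}_{2}(\Omega)(\lambda_{2}-\lambda_{1})$ is essentially a PPW-type problem that remains open. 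So the proposal contains a genuine gap exactly at the step the conjecture is about; what you have written is a reasonable research program, consistent with how the paper itself frames the statement (as a conjecture motivated by \eqref{ppw-conj-1} and numerical evidence), but it should not be presented as a proof, even in outline, of \eqref{con-z-1'}.
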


\begin{rem} As we know, for the Dirichlet problem \eqref{Eigenvalue-Problem} on the $n$-dimensional Euclidean space $R^{n}$, the gap of the consecutive eigenvalues $\lambda_{k+1}-\lambda_{k}$ is
bounded by the first $k$-th eigenvalues in the previous literatures. However, from the above conjecture, we know that the gap of the consecutive eigenvalues
is bounded only by the first two eigenvalues.
\end{rem}

To exploit the gap coefficients, we discuss some important examples in the Euclidean space $\mathbb{R}^{n}$. We note that
there maybe exist more examples in the complete Riemannian manifolds to be found to suppose the conjecture.

\begin{exa} \label{exa1}the interval $(0,L)$
\end{exa}

When the dimension is one, Dirichlet problem \eqref{Eigenvalue-Problem} reads:
\begin{equation}
\left\{ \begin{aligned} \label{Eigen-Prob-1-d}\Delta u=-\lambda u,\ \ &{\rm
in}\ \ \ \ [0,L],
         \\
u=0,\ \ &{\rm on}\ \ \{0,L\}.
                          \end{aligned} \right.
                          \end{equation}
It is not difficult to infer that $$\lambda_{k+1}-\lambda_{k}=\frac{((k+1)\pi)^{2}}{L^{2}}-\frac{(k\pi)^{2}}{L^{2}}=(2k+1)\frac{\pi^{2}}{L^{2}}\leq\frac{3\pi^{2}}{L^{2}}k=(\lambda_{1}-\lambda_{2})k.$$
which implies that the  conjecture \ref{con-z-1} is true when the dimension $n=1$.

\begin{exa} The cuboid in $\mathbb{R}^{n}$
\end{exa}
Assume that $n\geq2,$ and $\Sigma_{0}(\subset\Omega\subset\mathbb{R}^{n})$ is a cuboid satisfying

$$V(\Sigma_{0})=\sup_{\Sigma\subset\Omega}V(\Sigma).$$ We define the gap coefficient as follows:

\begin{equation*}\mathcal{S}_{1}(\Omega)=
 \lambda_{2}(\Sigma_{0})-\lambda_{1}(\Sigma_{0}),
                          \end{equation*}where $\lambda_{2}(\Sigma_{0})$ and $\lambda_{1}(\Sigma_{0})$ are the first eigenvalue and the
second eigenvalue of the Dirichlet problem \eqref{Eigenvalue-Problem} of Laplacian on the cube $\Sigma_{0}\subset\mathbb{R}^{2}$, respectively.
 Under the above assumptions, we present the following:

\begin{con}\label{conj-z1}
Let $\Omega$ be a bounded domain with piecewise smooth boundary $\partial\Omega$ on an $n$-dimensional Euclidean space
$\mathbb{R}^{n}$. If $\lambda_{i}$ is the $i$-th eigenvalue of Dirichlet problem \eqref{Eigenvalue-Problem}, then

\begin{equation}\label{con-z1}\lambda_{k+1}-\lambda_{k}\leq\mathcal{S}_{1}(\Omega)k^{\frac{1}{n}}.
\end{equation}

\end{con}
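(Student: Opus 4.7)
The plan is to combine domain monotonicity with the trial-function machinery developed in Section \ref{sec2}, tuning the trial functions to the anisotropic geometry of the maximal inscribed cuboid $\Sigma_0$. The case $n=1$ is already handled by the direct calculation in Example \ref{exa1}, where $\lambda_{k+1}-\lambda_k=(2k+1)\pi^2/L^2\leq 3\pi^2 k/L^2=(\lambda_2-\lambda_1)k=\mathcal{S}_1(\Omega)k^{1/n}$; for $n\geq 2$ the goal is to reduce the gap on $\Omega$ to the fundamental-gap data of $\Sigma_0$, for which the eigenvalues are explicit.

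After a rigid motion, set $\Sigma_0=[0,a_1]\times\cdots\times[0,a_n]$ with $a_1\leq\cdots\leq a_n$, so that $\lambda_1(\Sigma_0)=\pi^2\sum_j a_j^{-2}$ and $\mathcal{S}_1(\Omega)=\lambda_2(\Sigma_0)-\lambda_1(\Sigma_0)=3\pi^2/a_n^2$. The first step is to invoke Lemma \ref{lem2.3} with scaled coordinate functions $h_j(x)=\alpha_j x^j$, where the weights $\alpha_j$ are calibrated so that the longest direction $x^n$ dominates. Because $\Delta h_j\equiv 0$ in $\mathbb{R}^n$, the orthogonality hypothesis $\sum_j\int 2u_i\langle\nabla h_j,\nabla u_i\rangle\Delta h_j\,dv=0$ is automatic, and the inequality in Lemma \ref{lem2.3} reduces to a clean estimate of the form $\bigl(\sum_j(a_j^2+b_j)/2\bigr)(\lambda_{k+2}-\lambda_{k+1})^2\leq 16(\lambda_{k+2}+\rho)\,\overline{\alpha}^2\lambda_i$ with $\rho=0$. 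Domain monotonicity $\lambda_1(\Omega)\leq\lambda_1(\Sigma_0)$ plus the Cheng--Yang recursion $\lambda_{k+1}(\Omega)\leq C_0(n)\lambda_1(\Omega)k^{2/n}$ then converts this into $\lambda_{k+1}-\lambda_k\leq C\,k^{1/n}$.

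The delicate point is to collapse the resulting constant $C$ to exactly $\mathcal{S}_1(\Omega)=3\pi^2/a_n^2$. My plan is to choose the scaling weights so that $\alpha_j/\alpha_n$ vanishes rapidly for $j<n$ (reflecting the fact that on a long-thin cuboid, low-lying excited states differ from the ground state only in the longest coordinate) and to exploit the anisotropy through the ratio $\overline{\alpha}^2/(n\alpha^2+(n+p)\beta)$ in the general formula \eqref{general-formula-2}. A secondary refinement replaces linear coordinate functions by cut-offs of the product sinusoids $\sin(\pi x^j/a_j)$ and $\sin(2\pi x^j/a_n)\prod_{j<n}\sin(\pi x^j/a_j)$ that mimic the first two eigenfunctions of $\Sigma_0$ itself, so that the trial-function bound $\|2\langle\nabla h,\nabla u_i\rangle+u_i\Delta h\|^2$ picks up the factor $3\pi^2/a_n^2$ rather than $\lambda_1(\Omega)$.

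The hard part will be controlling the error produced by localizing the cuboid eigenfunctions to $\Omega$: on the collar $\Omega\setminus\Sigma_0$, the cut-off derivative contributes a term that is not obviously dominated by $\mathcal{S}_1(\Omega)$ times $k^{1/n}$. A comparison between the Rayleigh quotients on $\Omega$ and on $\Sigma_0$, perhaps via a carefully chosen partition of unity, will be needed to absorb this error. It is precisely because a priori estimates along these lines yield a multiplicative constant of order $4\lambda_1(\Omega)\sqrt{C_0(n)/n}$ rather than $3\pi^2/a_n^2$ (the two differ by a factor growing with dimension, as a direct check on cubes confirms) that the statement is posed as a conjecture rather than a theorem; turning the heuristic above into a genuinely anisotropy-aware estimate is the chief analytic obstacle.
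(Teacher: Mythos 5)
There is a genuine gap here, and in fact there has to be: the statement you are addressing is posed in the paper as a conjecture (Conjecture \ref{conj-z1}), and the paper offers no proof of it at all --- only numerical evidence and the restricted Proposition \ref{prop1}, which verifies the cube case for $k\leq 100$ by listing the explicit eigenvalues. Your text is a research plan rather than a proof, and you concede as much in the final paragraph; none of the three steps that would actually deliver the conjectured constant $\mathcal{S}_{1}(\Omega)=\lambda_{2}(\Sigma_{0})-\lambda_{1}(\Sigma_{0})=3\pi^{2}/a_{n}^{2}$ is carried out. Concretely: (i) with $h_{j}=\alpha_{j}x^{j}$ in $\mathbb{R}^{n}$, Lemma \ref{lem2.3} gives $(\lambda_{k+2}-\lambda_{k+1})^{2}\leq 16\,\overline{\alpha}^{2}\lambda_{k+2}\lambda_{1}/\sum_{j}\alpha_{j}^{2}$, and the ratio $\overline{\alpha}^{2}/\sum_{j}\alpha_{j}^{2}$ is \emph{minimized} when all $\alpha_{j}$ are equal, so the anisotropic calibration you propose can only worsen the constant, never collapse it from $4\lambda_{1}(\Omega)\sqrt{C_{0}(n)/n}$ down to $3\pi^{2}/a_{n}^{2}$; the machinery of \eqref{general-formula-2} structurally produces constants proportional to $\lambda_{1}(\Omega)$ with a dimensional factor, exactly as in \eqref{czy-1}. (ii) Domain monotonicity controls individual eigenvalues ($\lambda_{k}(\Omega)\leq\lambda_{k}(\Sigma_{0})$), but eigenvalue \emph{gaps} are not monotone under domain inclusion, so the inscribed cuboid $\Sigma_{0}$ cannot transmit its fundamental gap to $\Omega$ by comparison alone. (iii) The refinement via cut-off sinusoids mimicking the eigenfunctions of $\Sigma_{0}$ founders precisely where you say it does: the collar term involves $\lambda_{i}$-type energies of $u_{i}$ on $\Omega\setminus\Sigma_{0}$ and nothing in Lemma \ref{lem2.1} or \ref{lem2.3} converts it into the gap of $\Sigma_{0}$; no partition-of-unity argument is given, and it is not clear one exists.

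So the honest summary is that your proposal reproves (in outline) a bound of the Chen--Zheng--Yang type with constant comparable to $4\lambda_{1}(\Omega)\sqrt{C_{0}(n)/n}$, which is what Theorem \ref{thm1.1} already yields, and then correctly identifies --- but does not overcome --- the obstruction to sharpening the constant to $\mathcal{S}_{1}(\Omega)$. That obstruction is exactly why the statement appears in the paper as a conjecture; your write-up should not be presented as a proof of \eqref{con-z1}, and as it stands it is not one.
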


\begin{rem}
Suppose that $\Omega$ is an arbitrary cuboid, and $ \Omega_{\star} $ is cube with the same volume as $\Omega$, i.e.,
$V(\Omega)=V(\Omega_{\star})$. Let $R_{\star}$ be the inscribe radii of the cube $\Omega_{\star}$. Assume that $\lambda_{1}(\Omega_{\star})$ and $\lambda_{2}(\Omega_{\star})$ are the first eigenvalue and the
second eigenvalue of the Dirichlet problem \eqref{Eigenvalue-Problem} of Laplacian on the cuboid $\Omega_{\star}\subset\mathbb{R}^{2}$, respectively. Then, it is easy to check that

\begin{equation}\label{gap-tran1}\lambda_{2}(\Omega_{\star})-\lambda_{1}(\Omega_{\star})=\frac{3n\pi^{2}}{D^{2}(\Omega_{\star})}.\end{equation}
Then, we have the following eigenvalue inequality (see {\rm\cite{Siu2}}):

\begin{equation}\label{siu-ineq1} (\lambda_{2}-\lambda_{1})R^{2}_{0}\leq(\lambda_{2}(\Omega_{\star})-\lambda_{1}(\Omega_{\star}))R_{\star}^{2},\end{equation}where $R_{0}$ denotes
the inradius of $\Omega$. Assume that the eigenvalues of the Dirichlet problem \eqref{Eigenvalue-Problem} of Laplacian on the cuboid $\Omega\subset\mathbb{R}^{2}$ satisfying \eqref{con-z-1}.
Then, from \eqref{gap-tran1} and \eqref{siu-ineq1}, we have

\begin{equation}\begin{aligned}  \lambda_{k+1}-\lambda_{k}&\leq(\lambda_{2}(\Omega_{\star})-\lambda_{1}(\Omega_{\star}))\frac{R_{\star}^{2}}{R^{2}_{0}}k^{\frac{1}{n}}
\\&=\frac{3n\pi^{2}}{D^{2}(\Omega_{\star})}\frac{R_{\star}^{2}}{R^{2}_{0}}k^{\frac{1}{n}}.\end{aligned}\end{equation}
\end{rem}

We assume that  $\Omega$ is the open $n$-dimensional rectangle $\Omega= (0,a_{1})\times\cdots\times (0,a_{n})\subset \mathbb{R}^{n},$
then, for the Dirichlet  eigenvalue problem on $\Omega$, the eigenvalues are given
by the collection $\left\{\lambda_{k_{1}\cdots k_{n}}\right\}$, where

$$\lambda_{k_{1}\cdots k_{n}}=\left(\frac{k^{2}_{1}}{a_{1}^{2}}+\cdots+\frac{k^{2}_{n}}{a_{n}^{2}}\right)\pi^{2}$$
and each $k_{j},j=1,\cdots,n,$ ranges over the positive integers.  For any fixed value of $a_{i},i=1,2,\cdots,n$, we can arrange all of the eigenvalues in order of size such that

$$\left\{\lambda_{i}\right\}_{i=1}^{+\infty}=\left\{\lambda_{k_{1}\cdots k_{n}}|k_{j}\in\mathbb{N}^{+},j=1,2,\cdots,n\right\}.$$
Assume  that $a_{1}=a_{2}=\cdots=a_{n}$, $k\leq100$. By a direct calculation, one can obtain the fundamental gap:

\begin{equation}\begin{aligned}\label{fun-gap-1}  \lambda_{2}-\lambda_{1}
=\frac{3n\pi^{2}}{D^{2}(\Omega)}.\end{aligned}\end{equation}Furthermore, by the numerical calculation, one can easily check that conjecture \ref{conj-z1} is true.
This is, by  \eqref{fun-gap-1}, we can prove the following

\begin{prop}\label{prop1}
Let $\Omega$ be a cube on the $n$-dimensional Euclidean space
$\mathbb{R}^{n}$. If $\lambda_{i}$ is the $i$-th eigenvalue of Dirichlet problem \eqref{Eigenvalue-Problem}, then, for any $k\leq100$,

\begin{equation}\lambda_{k+1}-\lambda_{k}\leq \frac{3\pi^{2}}{[d(\Omega)]^{2}}k^{\frac{1}{n}},
\end{equation}
where $d(\Omega)$ denotes the length of side of the cube $\Omega$.

\end{prop}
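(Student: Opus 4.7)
\noindent\textbf{Proof proposal for Proposition~\ref{prop1}.} My plan is to reformulate the inequality as a purely integer-sequence statement using the explicit Dirichlet spectrum of the cube, dispose of the extreme dimensions by elementary clustering arguments, and handle the remaining finite range by direct enumeration.

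Setting $L = d(\Omega)$, the Dirichlet eigenvalues of the cube $(0,L)^n$ form the multiset
\begin{equation*}
\Big\{\lambda_{\mathbf k} = \tfrac{\pi^{2}}{L^{2}}\big(k_1^{2}+\cdots+k_n^{2}\big) : \mathbf k=(k_1,\ldots,k_n)\in(\mathbb{Z}^{+})^n\Big\}.
\end{equation*}
Writing $\mu_j = (L/\pi)^{2}\lambda_j$ for the non-decreasing rearrangement (with multiplicity), the claim is equivalent to the integer inequality
\begin{equation*}
\mu_{k+1} - \mu_k \le 3\,k^{1/n}, \qquad 1 \le k \le 100.
\end{equation*}
The coefficient $3$ is exactly the fundamental gap: the tuple $(1,\ldots,1)$ gives $\mu_1 = n$ (with multiplicity one) and the $n$ permutations of $(2,1,\ldots,1)$ give $\mu_2 = n+3$, so $\mu_2 - \mu_1 = 3$, in agreement with $\lambda_2-\lambda_1 = 3\pi^{2}/L^{2}$ derived earlier in the excerpt.

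I would then split on the dimension $n$. For $n=1$ the spectrum is $\mu_k = k^{2}$, hence $\mu_{k+1}-\mu_k = 2k+1 \le 3k$ for every $k\ge 1$. For $n\ge 100$ the value $n+3$ already has multiplicity $n \ge 100$, so the sorted spectrum begins
\begin{equation*}
\mu_1 = n,\qquad \mu_2 = \mu_3 = \cdots = \mu_{n+1} = n+3,
\end{equation*}
and every gap $\mu_{k+1}-\mu_k$ with $1\le k\le 100$ is either $0$ (for $2 \le k \le 100$) or equal to $3$ (at $k=1$); since $k^{1/n}\ge 1$, the inequality is immediate in this range. For the remaining dimensions $2 \le n \le 99$ the verification is a finite computation: for each such $n$ enumerate all $\mathbf k\in(\mathbb{Z}^{+})^n$ with $k_1^{2}+\cdots+k_n^{2}$ below a cutoff $B_n$ chosen large enough to produce at least $101$ eigenvalues (increasing $B_n$ iteratively if needed), sort with multiplicity, and compare each of the $100$ gaps against $3k^{1/n}$.

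The step I expect to be most delicate is not analytic but combinatorial: ensuring the enumeration for each intermediate $n$ is exhaustive, i.e., that no smaller eigenvalue is overlooked. This is controlled by a simple monotonicity check—any omitted tuple must satisfy $k_1^{2}+\cdots+k_n^{2} > \mu_{101}$ and therefore cannot affect the first $100$ gaps—and by the rapid growth of the cluster multiplicities $\binom{n}{j}$ at $\mu=n+3j$, which forces the index $100$ to fall inside only a few layers already for moderately sized $n$. With these observations in place, the resulting finite table mechanically confirms the inequality; this is the "numerical calculation" alluded to in the proposition.
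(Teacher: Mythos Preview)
Your proposal is correct and follows essentially the same route as the paper, which simply asserts that the proposition holds ``by the numerical calculation'' after recording the explicit cube spectrum and the fundamental gap $\lambda_2-\lambda_1=3\pi^2/d(\Omega)^2$. Your version is in fact more carefully organized than the paper's: the reduction to the integer inequality $\mu_{k+1}-\mu_k\le 3k^{1/n}$, the analytic disposal of $n=1$ and of $n\ge 100$ via the multiplicity of the first excited level, and the explicit cutoff argument for the finite enumeration in dimensions $2\le n\le 99$ all make precise what the paper leaves implicit.
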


\begin{exa}The triangle in $\mathbb{R}^{2}$
\end{exa}
Assume that $\Omega$ is a  triangle on the plane $\mathbb{R}^{2}$, many mathematicians investigated the bounds for the eigenvalues of the Dirichlet problem \eqref{Eigenvalue-Problem} of Laplacian on $\Omega$, for example, in \cite{FS,He,LR,M,PS,Siu1,Siu2}. In particular,
for any triangle $\Omega\subset\mathbb{R}^{2}$ with diameter $D(\Omega)$,  Lu and  Rowlett \cite{LR} obtained a sharp lower bound of the fundamental gap as follows:

$$\lambda_{2}-\lambda_{1}\geq \frac{64\pi^{2}}{9D^{2}(\Omega)},$$
where equality holds if and only if $\Omega$ is equilateral, which affirmatively answers to a conjecture proposed by  Antunes-Freitas in \cite{AF}.
Furthermore, we assume that $\Pi_{0}\subset\Omega\subset\mathbb{R}^{2}$ is an equilateral triangle satisfying

$$V(\Pi_{0})=\sup_{\Pi\subset\Omega}V(\Pi).$$ Define the gap coefficient as follows:

\begin{equation*}\mathcal{S}_{2}(\Omega)=\lambda_{2}(\Pi_{0})-\lambda_{1}(\Pi_{0})=
\frac{64\pi^{2}}{9D^{2}(\Pi_{0})},
                          \end{equation*}where $D(\Pi_{0})$ denotes the diameter of the domain $\Pi_{0}$. Under those assumptions, we similarly present the following:

\begin{con}\label{conj-z2}
Let $\Omega$ be a bounded domain with piecewise smooth boundary $\partial\Omega$ on an $n$-dimensional Euclidean space
$\mathbb{R}^{n}$. If $\lambda_{i}$ is the $i$-th eigenvalue of Dirichlet problem \eqref{Eigenvalue-Problem}, then

\begin{equation}\label{con-z5}\lambda_{k+1}-\lambda_{k}\leq\mathcal{S}_{2}(\Omega)\sqrt{k}.
\end{equation}
\end{con}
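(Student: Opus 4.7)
The plan is to run the machinery of Theorem 1.1 specialized to a planar domain $\Omega\subset\mathbb{R}^2$, for which $n=2$ makes the target exponent $k^{1/n}$ equal $\sqrt{k}$, and then to refine the resulting constant into the shape-adapted quantity $\mathcal{S}_2(\Omega)$. Concretely, I would take the two trial functions $h_1=\alpha_1 x^1$ and $h_2=\alpha_2 x^2$ in Lemma \ref{lem2.3}, tune the scalars $\alpha_1,\alpha_2>0$ so that the orthogonality condition $\sum_j\int 2u_i\langle\nabla h_j,\nabla u_i\rangle\Delta h_j\,dv=0$ and the comparison $a_j^2\geq b_j$ both hold, and use the fact that the mean curvature vanishes (so the constant $c$ of Theorem \ref{thm1.1} is zero) to obtain
\begin{equation*}
(\lambda_{k+2}-\lambda_{k+1})^{2}\leq C(\Omega)\,\lambda_1(\Omega)\,\lambda_{k+2}.
\end{equation*}
Combining this with the Cheng-Yang recursion bound $\lambda_{k+1}\leq C_0(2,k)\,k\,\lambda_1$ eliminates the $\lambda_{k+2}$ factor and recovers, in the $n=2$ case, the inequality \eqref{czy-1} of Chen-Zheng-Yang.

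The nontrivial content of the conjecture is the upgrade of the factor $C(\Omega)\lambda_1(\Omega)$ to $\mathcal{S}_2(\Omega)=\lambda_2(\Pi_0)-\lambda_1(\Pi_0)=64\pi^{2}/(9D^{2}(\Pi_0))$, where $\Pi_0$ is the largest equilateral triangle inscribed in $\Omega$. My plan is to replace the coordinate trial functions by two new trial functions built from the first two Dirichlet eigenfunctions of $\Pi_0$, extended by zero to $\Omega$ and Gram-Schmidt-orthogonalized against $u_1,\ldots,u_k$. Feeding these into Lemma \ref{lem2.1}, the Rayleigh-Ritz estimate of the energy of the extended functions is controlled by $\lambda_2(\Pi_0)$, while domain monotonicity $\lambda_i(\Omega)\leq\lambda_i(\Pi_0)$ converts the ensuing gap estimate into one involving $\lambda_2(\Pi_0)-\lambda_1(\Pi_0)$, which by the sharp Lu-Rowlett formula is exactly $\mathcal{S}_2(\Omega)$. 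As sanity checks one verifies the conjectured inequality directly on the equilateral triangle (whose spectrum is known in closed form on a rotated integer lattice) and numerically on families of isosceles, right, and obtuse triangles.

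The hardest step, and the reason the statement remains a conjecture, is precisely the replacement of the factor $\lambda_1(\Omega)$ arising from coordinate trial functions by the fundamental gap $\lambda_2(\Pi_0)-\lambda_1(\Pi_0)$: this is a two-dimensional, triangular analog of the Payne-P\'{o}lya-Weinberger gap conjecture that is only partially addressed by Ashbaugh-Benguria's sharp ratio bound at $k=1$. A complete proof will most likely require either a new monotonicity principle relating the fundamental gaps of $\Omega$ and $\Pi_0$ under inscription, or a refined spectral comparison in which the trial function is a rescaled first-excited eigenfunction of $\Pi_0$ cut off near $\partial\Omega$ with a controlled commutator cost. In the absence of such a tool, the fallback plan, consistent with the numerical approach of the paper, is to reduce Conjecture \ref{conj-z2} to this PPW-type gap inequality and to provide strong numerical evidence in its support on representative triangular and polygonal examples.
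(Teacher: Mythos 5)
You have not produced a proof, and in fairness you say so yourself: the decisive step --- upgrading the constant coming from coordinate trial functions to the fundamental-gap quantity $\mathcal{S}_{2}(\Omega)=\lambda_{2}(\Pi_{0})-\lambda_{1}(\Pi_{0})$ --- is exactly the part you leave open. This matches the status of the statement in the paper: it is a \emph{conjecture} (Conjecture \ref{conj-z2}), for which the paper offers no proof at all, only the definition of the gap coefficient via the Lu--Rowlett formula, a remark deducing a weaker consequence from Siudeja's inequality \eqref{siu-ineq}, and numerical verification on the equilateral triangle (Proposition \ref{prop2}, restricted to $k\leq100$ and justified by explicit computation with the spectrum \eqref{trang}). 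So the honest assessment is that your first paragraph merely re-derives the Chen--Zheng--Yang-type bound \eqref{czy-1} in dimension $n=2$, whose constant is of the order $4\lambda_{1}\sqrt{C_{0}(2)/2}$ and is in general far larger than, and not comparable to, $\mathcal{S}_{2}(\Omega)$; this gets the exponent $\sqrt{k}$ but not the conjectured constant, which is the entire content of the statement.

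Beyond the admitted gap, the bridging mechanisms you propose would not work as described. Lemma \ref{lem2.1} requires the trial function $h$ to lie in $C^{3}(\Omega)\cap C^{2}(\overline{\Omega})$ and enters only through the commutator-type norm $\|2\langle\nabla h,\nabla u_{i}\rangle+u_{i}\Delta h\|^{2}$ multiplying the eigenfunctions $u_{i}$ of $\Omega$; the zero-extension to $\Omega$ of a Dirichlet eigenfunction of $\Pi_{0}$ is not even $C^{1}$ across $\partial\Pi_{0}$, and there is no reason its commutator cost should be expressed in terms of $\lambda_{2}(\Pi_{0})-\lambda_{1}(\Pi_{0})$ rather than in terms of $\lambda_{2}(\Pi_{0})$ itself. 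Moreover, domain monotonicity $\lambda_{i}(\Omega)\leq\lambda_{i}(\Pi_{0})$ controls individual eigenvalues, not their differences: gaps are not monotone under inclusion, so monotonicity cannot ``convert the ensuing gap estimate into one involving $\lambda_{2}(\Pi_{0})-\lambda_{1}(\Pi_{0})$.'' Your fallback --- reduce to a PPW-type gap inequality and support it numerically --- is essentially what the paper itself does, but neither constitutes a proof of \eqref{con-z5}.
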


\begin{rem}
Assume that $\Omega$ is an arbitrary triangle, and $ \Omega_{\ast} $ is an equilateral triangle with the same volume as $\Omega$, i.e., $V(\Omega)=V(\Omega_{\ast})$.
Let $R_{\ast}$ be the inscribe radii of the equilateral triangle $\Omega_{\ast}$. Suppose that $\lambda_{1}(\Omega_{\ast})$ and $\lambda_{2}(\Omega_{\ast})$
are the first eigenvalue and the second eigenvalue of the Dirichlet problem \eqref{Eigenvalue-Problem} of Laplacian on the equilateral
triangle $\Omega_{\ast}\subset\mathbb{R}^{2}$, respectively. Then,
we have

\begin{equation}\label{gap-tran}\lambda_{2}(\Omega_{\ast})-\lambda_{1}(\Omega_{\ast})=\frac{64\pi^{2}}{9D^{2}(\Omega_{\ast})}.\end{equation}
In {\rm\cite{Siu2}}, B. Siudeja proved the following eigenvalue inequality:

\begin{equation}\label{siu-ineq} (\lambda_{2}-\lambda_{1})R^{2}_{0}\leq(\lambda_{2}(\Omega_{\ast})-\lambda_{1}(\Omega_{\ast}))R_{\ast}^{2},\end{equation}where $R_{0}$ denotes
the inradius of $\Omega$. Assume that the eigenvalues of the Dirichlet problem \eqref{Eigenvalue-Problem} of Lapacian on a
triangle $\Omega\subset\mathbb{R}^{2}$ satisfying \eqref{con-z5}.
Then, from \eqref{gap-tran} and \eqref{siu-ineq}, we have

\end{rem}
\begin{equation}\begin{aligned}  \lambda_{k+1}-\lambda_{k}&\leq(\lambda_{2}(\Omega_{\ast})-\lambda_{1}(\Omega_{\ast}))\frac{R_{\ast}^{2}}{R^{2}_{0}}\sqrt{k}
\\&=\frac{64\pi^{2}}{9D^{2}(\Omega_{\ast})}\frac{R_{\ast}^{2}}{R^{2}_{0}}\sqrt{k}.\end{aligned}\end{equation}

Next, we suppose that  $\Omega$ is the open equilateral triangle in the $2$-dimensional Euclidean space $\mathbb{R}^{2}$,
then, for the Dirichlet  eigenvalue problem on $\Omega$, the eigenvalues are given
by the collection $\left\{\lambda_{mn}|m,n\in\mathbb{N}^{+}\right\}$, where

\begin{equation}\label{trang}\lambda_{m,n}=\frac{16\pi^{2}(m^{2} + mn + n^{2})}{9D^{2}(\Omega)},\end{equation}
and the positive integers $m$ and $n$ range  over the set of positive integer $\mathbb{Z}^{+}$.  According to the
size of the eigenvalues, these elements of the set $\{\lambda_{mn}\}$ can be put in increasing order such that

$$\left\{\lambda_{i}\right\}_{i=1}^{+\infty}=\left\{\lambda_{mn}|m,n\in\mathbb{N}^{+}\right\}.$$

It is easy to  see that the spectral structure of the equilateral triangle hinges upon the number
theoretic properties of the binary quadratic form $m^{2}+mn+n^{2}$. Therefore, from the point of
view of number theory, it is very difficult to obtain the estimates for the gap of the eigenvalues.
However, according to the numerical calculation, it is not difficult to check that conjecture \ref{conj-z2} is
true for any $k\leq100$, i.e.,, noticing \eqref{gap-tran}, one can prove the following:

\begin{prop} \label{prop2}
Assume that  $\Omega$ is an equilateral triangle on the plane $\mathbb{R}^{2}$, then the eigenvalues of the Dirichlet
problem \eqref{Eigenvalue-Problem} of Laplacian satisfy the inequality:

\begin{equation}\label{con-z1}\lambda_{k+1}-\lambda_{k}\leq\frac{64\pi^{2}}{9D^{2}(\Omega)}\sqrt{k},
\end{equation}
for any $k\leq100$. \end{prop}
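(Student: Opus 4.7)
The plan is to reduce the claimed gap inequality to an entirely elementary arithmetic statement about the binary quadratic form $Q(m,n)=m^{2}+mn+n^{2}$ and then verify it by a finite computation. From the explicit formula \eqref{trang}, one has
\begin{equation*}
\lambda_{k+1}-\lambda_{k}=\frac{16\pi^{2}}{9D^{2}(\Omega)}\bigl(\mu_{k+1}-\mu_{k}\bigr),
\end{equation*}
where $\mu_{1}\le\mu_{2}\le\cdots$ denotes the nondecreasing rearrangement (with multiplicities) of the set $\{Q(m,n):m,n\in\mathbb{Z}^{+}\}$. Since $\frac{64\pi^{2}}{9D^{2}(\Omega)}=4\cdot\frac{16\pi^{2}}{9D^{2}(\Omega)}$, the desired inequality is equivalent to the purely arithmetic statement
\begin{equation*}
\mu_{k+1}-\mu_{k}\le 4\sqrt{k}\qquad\text{for every }1\le k\le 100.
\end{equation*}

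First I would establish the base case as a sanity check: the smallest value is $\mu_{1}=Q(1,1)=3$, and the next is $\mu_{2}=Q(1,2)=Q(2,1)=7$, so $\mu_{2}-\mu_{1}=4=4\sqrt{1}$, which is in fact the equality case and matches \eqref{gap-tran} exactly. Next, to carry out the enumeration, I would fix a cutoff $N=\mu_{101}$ (which a rough bound using the Weyl law, or the estimate $\mu_{k}\asymp \frac{4\pi}{\sqrt{3}}k$ coming from counting lattice points in the Eisenstein form, shows does not exceed a few hundred), list all pairs $(m,n)\in\{1,\dots,\lceil\sqrt{N}\rceil\}^{2}$ with $Q(m,n)\le N$, record the multiset of values $Q(m,n)$, sort it in increasing order, and tabulate the first $101$ entries $\mu_{1},\dots,\mu_{101}$.

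With the list in hand, the verification amounts to checking the $100$ inequalities $\mu_{k+1}-\mu_{k}\le 4\sqrt{k}$ one by one. I would present the outcome as a finite table (or, more economically, as a list of the values of $k$ for which the gap is nonzero together with the ratio $(\mu_{k+1}-\mu_{k})/\sqrt{k}$), together with the observation that the largest ratio occurring in the range $1\le k\le 100$ is $4$, attained only at $k=1$; all other ratios are strictly smaller because successive distinct Löschian numbers in this range differ by at most a small constant while $\sqrt{k}$ grows.

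The main obstacle is conceptual rather than technical: the values $Q(m,n)$ are precisely the Löschian numbers, whose distribution is controlled by the splitting behavior of primes in $\mathbb{Z}[\omega]$ (where $\omega=e^{2\pi i/3}$), and so $\mu_{k+1}-\mu_{k}$ behaves irregularly as $k$ varies. Consequently I do not expect a clean monotone analytic bound that would simultaneously cover all $k\le 100$; the proof is necessarily a finite case check. One could, however, make the computation rigorous by noting that $\mu_{k+1}-\mu_{k}\le \mu_{k+1}-\mu_{k-\mathrm{mult}(\mu_{k})}\le C$ for an explicit constant $C$ depending only on the local structure of the Eisenstein lattice around radius $\sqrt{\mu_{k}}$, and then verifying by direct enumeration that $C\le 4\sqrt{k}$ on the stated range. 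This yields Proposition \ref{prop2}.
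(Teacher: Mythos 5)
Your proposal is correct and follows essentially the same route as the paper: both use the explicit spectrum \eqref{trang} of the equilateral triangle and reduce the claim to a finite numerical verification for $k\le 100$, which the paper likewise settles by direct computation rather than an analytic argument. Your explicit reformulation as the arithmetic inequality $\mu_{k+1}-\mu_{k}\le 4\sqrt{k}$ for the sorted values of $m^{2}+mn+n^{2}$, with equality at $k=1$, simply makes the paper's numerical check precise.
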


\begin{rem}In fact, the order $k$ can be less than any finite positive integer in proposition \ref{prop1} and
proposition \ref{prop2}. However, if the condition $k\leq100$ is removed, then, from the point of view of number theory, we shall encounter an essential difficulty in the proofs of
proposition \ref{prop1} and proposition \ref{prop2}.\end{rem}

\begin{exa}the $n$-dimensional Ball $\mathbb{B}^{n}$ in $\mathbb{R}^{n}$
\end{exa}
Suppose that $\Omega_{\bullet}$ is an $n$-dimensional ball with the same volume as $\Omega$, i.e., $Vol(\Omega)=Vol(\Omega_{\bullet})$. Let $\lambda_{1}(\Omega_{\bullet})$ and $\lambda_{2}(\Omega_{\bullet})$ denote the first eigenvalue and the second eigenvalue of the Dirichlet problem of
Laplace operator on $\Omega_{\bullet}$, respectively. Recall that the famous Panye-P\'{o}lya-Weinberger conjecture (cf.\cite{Ash5,Ash6,PPW1,PPW2,T}) is to say
that, the ratios of the consecutive eigenvalues of Dirichlet  problem \eqref{Eigenvalue-Problem} satisfy the following

\begin{equation}\label{ppw-conj}\frac{\lambda_{k+1}}{\lambda_{k}}
\leq
\frac{\lambda_{2}(\Omega_{\bullet})}{\lambda_{1}(\Omega_{\bullet})}=\left(\frac{j_{n/2,1}}{j_{n/2-1,1}}\right)^{2},\end{equation}
where $j_{p,k}$ is the $k$-th positive zero of the Bessel
function $J_{p}(t)$.
In particular, when $k=1$, \eqref{ppw-conj} becomes
\begin{equation}\label{ppw-conj-1}\frac{\lambda_{2}}{\lambda_{1}}
\leq
\frac{\lambda_{2}(\Omega_{\bullet})}{\lambda_{1}(\Omega_{\bullet})}=\left(\frac{j_{n/2,1}}{j_{n/2-1,1}}\right)^{2},\end{equation}which is solved by Ashbaugh and Benguria(\cite{Ash2,Ash3,Ash4}).

\begin{rem}\label{gap-conj-z-2}
Let $\Omega$ be a bounded domain with piecewise smooth boundary $\partial\Omega$ on an $n$-dimensional Euclidean space
$\mathbb{R}^{n}$. Suppose that $\lambda_{i}$ is the $i$-th eigenvalue of Dirichlet problem \eqref{Eigenvalue-Problem} and satisfies

\begin{equation}\label{con-z-2} \lambda_{k+1}-\lambda_{k}\leq (\lambda_{2}-\lambda_{1})k^{\frac{1}{n}},
\end{equation}then, by \eqref{ppw-conj-1}, we can obtain the following estimate for the gap:
\begin{equation}\label{con-z-2} \lambda_{k+1}-\lambda_{k}\leq \lambda_{1}\left(\frac{j_{n/2,1}^{2}}{j_{n/2-1,1}^{2}}-1\right)k^{\frac{1}{n}}.
\end{equation}Therefore, eigenvalue inequality \eqref{con-z-2}  can be viewed as an algebraic inequality of  the Panye-P\'{o}lya-Weinberger type in the sense of the version of the gap of the consecutive eigenvalues. Obviously, it is a universal inequality.\end{rem}

Assume that  $R_{0}$ is the supremum of the radii among all of
the disks contained in $\Omega$   and $R_{\bullet}$ is the radii of the ball $\Omega_{\bullet}$, then we have (see {\rm\cite{Ash3,Siu2}})

\begin{equation*}\begin{aligned} \lambda_{k+1}-\lambda_{k}&\leq (\lambda_{2}(\Omega_{\bullet})-\lambda_{1}(\Omega_{\bullet}))\frac{R_{\bullet}^{2}}{R^{2}_{0}}k^{\frac{1}{n}},\end{aligned} \end{equation*}
which implies

\begin{equation}\begin{aligned} \lambda_{k+1}-\lambda_{k}\leq\frac{\left( j_{n/2,1}^{2}-j_{n/2-1,1}^{2}\right)}{R^{2}_{0}}k^{\frac{1}{n}}.\end{aligned} \end{equation}
In \cite{SWYY}, Singer-Wong-Yau-Yau obtained the following:

\begin{equation}\label{con-z-6} \lambda_{2}-\lambda_{1}\leq \frac{n\pi^{2}}{R^{2}_{0}}.
\end{equation}
Therefore, by utilizing \eqref{con-z-6}, we yield

\begin{equation*} \lambda_{k+1}-\lambda_{k}\leq \frac{n\pi^{2}}{R^{2}_{0}}k^{\frac{1}{n}}.
\end{equation*}

\vskip 5mm

\begin{ack} The author would like to thank Professor Zuoqin Wang  for his interest and helpful discussions. In particular, the author would like to  express his gratitude to professor Chiu-Yen Kao
and Braxton Osting for their useful comments and presenting an counterexample on the conjecture in the previous version of this paper. The author is supported by the National Nature Science Foundation of China (Grant No. 11401268).\end{ack}

\end{document}